\documentclass{amsart}
\usepackage{miyamath}
\usepackage{tikz}
\usetikzlibrary{matrix,arrows,snakes}
\usepackage{main}

\title[Log. Poincar\'e Lemma of Higher Level mod. $p$]{Logarithmic Exact Crystalline Poincar\'e Lemma of Higher Level modulo $p$.}
\author{Kazuaki Miyatani}
\address{%
    Department of Mathematics, Graduate School of Science, Hiroshima University\\
1-3-1 Kagamiyama, Higashi-Hiroshima 739-8526, Japan,\\miyatani@hiroshima-u.ac.jp}
\date{\today}
\begin{document}

\maketitle

\begin{abstract}
    Le Stum and Quir\'os proved the formal Poincar\'e lemma in crystalline cohomology
    of higher level using the jet complex,
    and applied it to give a de Rham interpretation of this cohomology.
    In this article, we prove the logarithmic version of the formal Poincar\'e lemma
    modulo $p$.
    Provided that each term of the log.\ jet complex is locally free,
    it gives the logarithmic version of the de Rham interpretation of
    the crystalline cohomology of higher level.
\end{abstract}

\section*{Introduction}

The crystalline cohomology of level $m$ for each non-negative integer $m$
was introduced by Pierre Berthelot \cite{Berthelot:DMAI}
and generalizes the classical crystalline cohomology, the case $m=0$ being the classical one.
The larger $m$ gets, the more general base schemes and coefficient sheaves the cohomology theory allows.
This cohomology is based on the generalization of fundamental crystalline notions
(for example, PD structure) to ``level-$m$ versions'' (for example, PD structure of level $m$),
which makes the construction of the cohomology parallel to the classical crystalline cohomology.

Unlike the construction, we can not directly translate
the proof of cohomological properties of classical crystalline cohomology
to the level-$m$ version.
One of the most distinctive example is the crystalline Poincar\'e lemma, that is,
the de Rham interpretation of the crystalline cohomology of level $m$;
in fact, the usual de Rham complex gives the cohomology only after tensorisation of $\bQ$.
A precise de Rham interpretation for the crystalline cohomology of level $m$
is given by the jet complex of order $p^m$ (a ``crystalline version'' of the complex by Lieberman \cite{Lieberman});
this fact is called ``exact Poincar\'e lemma'', and proved by Le Stum and Quir\'os \cite{LeStum-Quiros:EPLCCHL} following the idea of Berthelot \cite{Berthelot:LI}.
Since the jet complex is unbounded and complicated, we still need another kind of local Poincar\'e lemma
given by the author to prove some basic properties of the crystalline cohomology of level $m$
such as base change theorem and finiteness \cite{Miyatani:FCCHL}.

As the classical crystalline cohomology has a generalization to logarithmic schemes,
it is natural to ask for the logarithmic version of crystalline cohomology of level $m$.
We expect that this generalization has interactions with the theory of
logarithmic $\sD^{(m)}$-module \cite{Montagnon}.

The theme of this article is the exact Poincar\'e lemma for the log.\ crystalline cohomology of level $m$,
which should be the starting point of investigating this cohomology.
The main theorem is this Poincar\'e lemma modulo $p$, that is, the following theorem.

\begin{theorem}[Corollary \ref{thm:exactpoincare}]
    Let $(S,\fa,\fb,\gamma)$ be a fine log.\ $m$-PD scheme on which $p$ is nilpotent,
    and let $X$ be a fine log.\ scheme which is log.\ smooth over $S$,
    and assume that the underlying scheme $\underline{X}$ is flat over $\underline{S}$.
    Moreover, \emph{assume that $p\sO_{\underline{X}}=0$}.
    Denote by $\dot\Omega_{X/S}^{(m),\bullet}$ the log.\ jet complex of $X$ {\rm (}Definition \ref{def:logjet}{\rm )},
    and let $E$ be a flat log.\ $m$-crystal in $\sO_{X/S}^{(m)}$-modules.
    Then, there exists an isomorphism in the derived category
    \[
    \bR{u_{X/S}^{(m)}}_*(E)\to E_X\otimes\dot\Omega_{X/S}^{(m),\bullet}.
    \]
    \label{conj}
\end{theorem}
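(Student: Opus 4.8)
The plan is to deduce this de Rham interpretation --- the ``exact'' Poincar\'e lemma --- from the corresponding ``formal'' Poincar\'e lemma, i.e.\ the assertion that a suitable linearized jet complex is a resolution of $E$ in the log.\ crystalline topos; the bulk of the argument lies in that resolution statement, and it is there that the hypothesis $p\sO_{\underline{X}}=0$ is genuinely needed.

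First I would localize. Both $\bR{u_{X/S}^{(m)}}_*(E)$ and $E_X\otimes\dot\Omega_{X/S}^{(m),\bullet}$ commute with restriction to opens of $X$, and the assertion can be checked locally, so by log.\ smoothness of $X/S$ I may assume $X$ affine and equipped with a chart exhibiting a system of logarithmic coordinates. With respect to these coordinates the $m$-PD envelope $\mathcal{P}=\mathcal{P}_{X/S}^{(m)}$ of the log.\ diagonal $X\hookrightarrow X\times_S X$, and hence the log.\ jet complex $\dot\Omega_{X/S}^{(m),\bullet}$ of Definition \ref{def:logjet}, acquire an explicit description: $\mathcal{P}$ becomes a level-$m$ divided-power polynomial algebra $\sO_X\langle\underline\tau\rangle_{(m)}$ in the logarithmic coordinate differences $\tau_i=\mu_i-1$, over which $\dot\Omega_{X/S}^{(m),\bullet}$ is an explicit de Rham-type complex.

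Next, set up linearization. Recall the linearization functor $L_{X/S}^{(m)}$ from $\sO_X$-modules to $\sO_{X/S}^{(m)}$-modules, defined by pushforward along the affine morphism attached to $\mathcal{P}$; it satisfies ${u_{X/S}^{(m)}}_*\circ L_{X/S}^{(m)}=\mathrm{id}$, and $L_{X/S}^{(m)}(\mathcal{F})$ is ${u_{X/S}^{(m)}}_*$-acyclic for every quasi-coherent $\sO_X$-module $\mathcal{F}$, the acyclicity resting on that affineness together with the flatness hypotheses on $\underline{X}/\underline{S}$ and on $E$, which keep everything quasi-coherent after the base changes involved. Combining the stratification on the $m$-crystal $E$ with the jet differentials one obtains the linearized jet complex
\[
0\to E\to L_{X/S}^{(m)}(E_X)\to L_{X/S}^{(m)}(E_X\otimes\dot\Omega_{X/S}^{(m),1})\to\cdots,
\]
and the formal Poincar\'e lemma asserts its exactness. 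Granting this, apply ${u_{X/S}^{(m)}}_*$: since every term to the right of $E$ is ${u_{X/S}^{(m)}}_*$-acyclic, $\bR{u_{X/S}^{(m)}}_*(E)$ is represented by ${u_{X/S}^{(m)}}_*L_{X/S}^{(m)}(E_X\otimes\dot\Omega_{X/S}^{(m),\bullet})=E_X\otimes\dot\Omega_{X/S}^{(m),\bullet}$, which is the isomorphism claimed.

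The main obstacle is the exactness of the linearized jet complex, and this is where the modulo-$p$ hypothesis cannot be dispensed with. By the definition of $L_{X/S}^{(m)}$ and the coordinates fixed above, it reduces to a purely algebraic claim: the explicit de Rham-type complex attached to $\sO_X\langle\underline\tau\rangle_{(m)}$ is a resolution of $\sO_X$. I would prove it by constructing an explicit contracting homotopy, built up inductively in the multi-index of the $\tau$-variables --- a logarithmic, level-$m$ analogue of the homotopy used in the classical crystalline Poincar\'e lemma. The role of the modulo-$p$ hypothesis is to make this clean: modulo $p$ the level-$m$ divided-power operations $\gamma_{(k)}$ and the truncation maps among the $m$-PD polynomial algebras degenerate enough --- various $p$-adic coefficients becoming zero or units --- that the homotopy identity $\mathrm{d}h+h\mathrm{d}=\mathrm{id}$ holds on the nose, whereas integrally one is forced to track error terms, reflecting the familiar phenomenon that the naive de Rham complex computes the cohomology only after $\otimes\bQ$. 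The most delicate point will be the simultaneous interplay of three features --- the multiplicative (logarithmic) nature of the variables $\mu_i=1+\tau_i$, the level-$m$ divided-power bookkeeping, and the reduction modulo $p$ --- so that verifying the homotopy identity becomes a careful combinatorial computation with $m$-PD structures in characteristic $p$. Once this algebraic exactness is established, sheafifying on the coordinate patch and using the flatness hypotheses to descend to general $X$ completes the proof.
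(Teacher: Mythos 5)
Your outline does follow the paper's strategy: everything is reduced to the formal Poincar\'e lemma (the linearized log.\ jet complex is a resolution, proved by an explicit contracting homotopy whose identity $dh+hd=\id$ is only valid modulo $p$), and the cohomological statement is then extracted by acyclicity of linearizations. However, there is a genuine gap in the sheaf-theoretic middle step: you argue on the full log.\ $m$-crystalline site exactly as in the classical case $m=0$, and for $m>0$ this does not work as stated. The linearization $L^{(m)}(\sF)$ is \emph{not} an $m$-crystal on $\Crism(X/S)$; its value on a thickening $(U,T,J,\delta)$ is the pushforward from $D^{(m)}_U(T\times_S X)$, and the base-change isomorphism $D^{(m)}_U(T\times_S X)\cong T\times_X P^{(m)}_{X/S}$ --- which is what reduces both the exactness of the linearized complex evaluated on $T$ and the identification $E\otimes L^{(m)}(\sF)\cong L^{(m)}(E_X\otimes\sF)$ to the algebraic Poincar\'e lemma over $X$ --- is only available for \emph{fundamental} thickenings, where $\sO_T$ is an $m$-PD polynomial algebra (Lemma \ref{lem:rest-D}). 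For an arbitrary thickening the structure of $D^{(m)}_U(T\times_S X)$ is not controlled, so your flat-retraction argument does not apply there, and the assertions ``the linearized complex is exact'' and ``each term is ${u_{X/S}^{(m)}}_*$-acyclic, so applying ${u_{X/S}^{(m)}}_*$ computes $\bR{u_{X/S}^{(m)}}_*(E)$'' are not justified on the full site.

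This is precisely why the paper passes through Berthelot's restricted site: it shows $\bar{L}^{(m)}(\sF)$ is a restricted crystal and $E\otimes\bar{L}^{(m)}(\sF)\cong\bar{L}^{(m)}(E_X\otimes\sF)$ (Proposition \ref{thm:linearization}), proves the resolution $M\to M\otimes\bar{L}^{(m)}(\dot\Omega_{X/S}^{(m),\bullet})$ by evaluating only on fundamental thickenings admitting a flat retraction $h\colon T\to X$ (Theorem \ref{thm:cohomology}; note that flatness of $h$ and of $M$ is what preserves the quasi-isomorphism under pull-back and tensor --- not merely quasi-coherence, as your sketch suggests), and then returns to $\bR{u_{X/S}^{(m)}}_*$ via the comparison $\bR{u_{X/S}^{(m)}}_*\cong\bR\left.\bar{u}_{X/S}^{(m)}\right._{\!\ast}{Q_{X/S}^{(m)}}^*$ for bounded-below complexes (Proposition \ref{thm:restricted}). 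Your proposal needs this detour (or some substitute controlling $m$-PD envelopes over arbitrary thickenings) to be a proof; the rest --- localization to global coordinates, the explicit description of $\P_{X/S}^{(m)}$, and the combinatorial homotopy modulo $p$ --- matches the paper's argument.
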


By a standard argument, the proof reduces to showing the exactness of the complex
$0\to\sO_{\underline{X}}\to L\dot{\Omega}_{X/S}^{(m),0}\to L\dot{\Omega}_{X/S}^{(m),1}\to\dots$,
where $L\dot{\Omega}_{X/S}^{(m),\bullet}$ denotes the linearization of log.\ jet complex of $X$
(Theorem \ref{thm:trueformalLSQ}).
We prove this exactness by explicitly constructing a local homotopy of this complex;
this is where we need a complicated calculation and is the central point of our proof.
A more detailed strategy of the proof is given in Section 6.

We put some comments about the assumption that $p\sO_{\underline{X}}=0$ in Theorem \ref{conj}.
This assumption is just a technical one and should be got rid of in the future.
In fact, we conjecture that each term of the log.\ jet complex is locally free;
if this is true, then we may get rid of the assumption because
$p$ is nilpotent on $\underline{X}$.
An obstruction of proving the local freeness is the complexity of the log.\ jet complex
(the local freeness is still open even in the non-logarithmic situation \cite[0.2]{Miyatani:FCCHL}).

To conclude this introduction, we explain the structure of this article.
In the first three sections, we introduce basic notions concerning the theory of logarithmic crystalline cohomology
of level $m$ and collect some basic facts.
Section 4 is devoted to some logarithmic differential calculus.
In Section 5, we define the log.\ jet complex, which plays a central role on the log.\ exact Poincar\'e lemma,
and give an explicit desciption of this complex.
Then, in Section 6, we state the formal log.\ Poincar\'e lemma modulo $p$ and
explain the strategy of the proof.
The proof of it occupies next two sections and it is the most difficult part in this article.
In the last section, Section 9, we apply this result to prove Theorem \ref{conj}.

\section*{Conventions and Notations}

Throughout this article, we fix a prime number $p$ and a natural number $m$
(natural number means, in this article, non-negative integer).

Let $k$, $k'$ and $k''$ be natural numbers such that $k=k'+k''$. Then, three numbers
\[
\binom{k}{k'}:=\frac{k!}{k'!\,k''!},\hspace{1.5em}\mbinom{k}{k'}:=\frac{q!}{q'!\,q''!}\hspace{1em}\mathrm{and}\hspace{1em} \qbinom{k}{k'}:=\binom{k}{k'}\mbinom{k}{k'}^{-1}
\]
are often used in this article.
Here, $q$ \resp{$q'$, $q''$} denotes the integer part of $k/p^m$ \resp{$k'/p^m$, $k''/p^m$}.

Next, we introduce notation on multi-indices.
The element $(0,\dots,0,1,0,\dots,0)$ in $\bN^n$, where $1$ sits in the $i$-th component, is denoted by $\one_i$.
When $I$ is an element of $\bN^n$, its $k$-th component is denoted by $i_k$ for each $k=1,\dots,n$.
Similarly, we write $J=(j_1,\dots,j_n)$, and for a subscript $i$ we write $J_i=(j_{i,1},\dots,j_{i,n})$.
If $I\in\bN^n$, then $\hat{I}$ denotes the element $(i_1,\dots,i_{n-1})$ in $\bN^{n-1}$,
and this is identified with the element $(i_1,\dots,i_{n-1},0)$ in $\bN^n$.
Moreover, $|I|$ denotes the number $\sum_{k=1}^{n}i_k$.
If $J\leq I$, that is, if $j_k\leq i_k$ for all $k=1,\dots,n$, then the usual conventions
\[
\binom{I}{J}:=\prod_{k=1}^n\binom{i_k}{j_k},\hspace{1.5em}\mbinom{I}{J}:=\prod_{k=1}^n\mbinom{i_k}{j_k}
\hspace{1em}\mathrm{and}\hspace{1em}\qbinom{I}{J}:=\prod_{k=1}^n\qbinom{i_k}{j_k}
\]
are used.

Finally, let us fix some notation on log.\ schemes.
Let $X$ be a log.\ scheme. Then, the underlying scheme of $X$ is denoted by $\underline{X}$,
and its log.\ structure is denoted by $M_X$.
The structure sheaf $\sO_{\underline{X}}$ of $\underline{X}$ is simply denoted by $\sO_X$,
and the \'etale site $\underline{X}_{\et}$ of $X$ by $X_{\et}$.
If $S$ is a fine log.\ scheme and $X$ is a fine log.\ scheme over $S$, we agree that $X^{(n)}_{/S}$ denotes
the fiber product (in the category of the fine log.\ schemes) of $n$ copies of $X$ over $S$.

We assume that $p$ is nilpotent on all schemes appearing in this article.

\section{Logarithmic crystalline site of level $m$.}
\label{ss:site}

In this article, we freely use the fundamental notions of $m$-PD structures and of $m$-PD schemes;
the readers may refer to Berthelot's fundamental article \cite{Berthelot:DMAI}
or an article by the author \cite{Miyatani:FCCHL}.

Let $(S,\fa,\fb,\gamma)$ be a fine log.\ $m$-PD scheme, that is,
a datum which consists of a fine log.\ scheme $S$, a quasi-coherent ideal
$\fa$ of $\sO_S$ and a quasi-coherent $m$-PD structure $(\fb, \gamma)$ on $\fa$.

As in the classical case \cite[(5.6)]{Kato:LSFI}, we may construct the
logarithmic version of $m$-PD envelope.

\begin{proposition-definition}
Let $\mathscr{C}'_1$ denote the category of the data $(i,J,\delta)$ consisting of
an exact closed immersion $i\colon X\hookrightarrow Y$ of fine log.\ $S$-schemes
and an $m$-PD structure $(J,\delta)$ on the ideal of $\underline{X}\hookrightarrow \underline{Y}$.
Let $\mathscr{C}'_2$ denote the category of the closed immersions $i\colon X\hookrightarrow Y$ of log.\ $S$-schemes
such that $M_X$ is fine and $M_Y$ is coherent.
Then, the forgetful functor $\mathscr{C}'_1\to\mathscr{C}'_2$ has a right adjoint functor.
When $i\colon X\hookrightarrow Y$ is an object of $\mathscr{C}'_2$, its image under this functor is
called the {\rm log.\ $m$-PD envelope} of $i$ and
is denoted by $(X\hookrightarrow D^{(m)}_X(Y), \sI, { }^{[\,]})$.
\end{proposition-definition}

We fix throughout this subsection a fine log. $S$-scheme $X$,
and assume that the $m$-PD structure $(\fb,\gamma)$ extends to $\sO_X$.

\begin{definition}

    (i) Let $\underline{U}$ be an \'etale scheme over $\underline{X}$, and let $U$ denote the log.\ scheme $(\underline{U}, M_X|_{\underline{U}})$.
    A {\em log.\ $m$-PD thickening} $(U,T,J,\delta)$ of $U$ over $(S,\fa,\fb,\gamma)$ is a 
    datum which consists of a fine log.\ $S$-scheme $T$, an exact closed $S$-immersion $U\hookrightarrow T$
    and an $m$-PD structure $(J,\delta)$ on the ideal of $\underline{U}\hookrightarrow \underline{T}$ compatible with $(\fb,\gamma)$.

    (ii) A log.\ $m$-PD thickening $(U, T, J, \delta)$ is said to be {\em fundamental} if
    there exists a fine log.\ smooth $S$-scheme $Y$ and a closed $S$-immersion $i\colon U\hookrightarrow Y$
    such that $(U, T, J, \delta)$ is isomorphic to the log.\ $m$-PD envelope of $i$.
\end{definition}

When $(U,T,J,\delta)$ is a fundamental log.\ $m$-PD thickening, the structure of $\sO_T$ is described by the following lemma.

\begin{lemma}
Let $i\colon U\hookrightarrow Y$ be a closed immersion of fine log.\ smooth schemes over $S$
such that the underlying scheme $\underline{U}$ is flat over $\underline{S}$.
Then for an integer $n$, the structure sheaf $\sO_{D^{(m)}_U(Y)}$ of the log.\ $m$-PD envelope of $i$ is, \'etale locally on $Y$,
isomorphic to the $m$-PD polynomial algebra $\sO_U\{t_1,\dots,t_n\}_{(m)}$ over $\sO_U$,
which is introduced in Berthelot's article {\rm \cite[1.5.1]{Berthelot:DMAI}}.
\label{lem:rest-D}
\end{lemma}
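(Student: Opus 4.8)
The plan is to establish the logarithmic counterpart of Berthelot's explicit description of $m$-PD envelopes \cite[1.5]{Berthelot:DMAI} (the case $m=0$ being due to Kato \cite{Kato:LSFI}): working \'etale locally on $Y$, one reduces to an \emph{exact} closed immersion by means of the exactification that underlies the construction of the log.\ $m$-PD envelope, chooses logarithmic coordinates so that $Y$ becomes, infinitesimally along $U$, a relative affine space over $U$, and then appeals to Berthelot's identification of the $m$-PD polynomial algebra.

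First, since the assertion is \'etale local on $Y$ and the log.\ $m$-PD envelope is formed by first exactifying $i$ and then taking the ordinary $m$-PD envelope (as in \cite[(5.6)]{Kato:LSFI}), one may factor $i$, \'etale locally on $Y$, as a composite $U\hookrightarrow Y'\to Y$ of an exact closed $S$-immersion followed by a log.\ \'etale morphism. Then $\sO_{D^{(m)}_U(Y)}\cong\sO_{D^{(m)}_U(Y')}$ as $\sO_U$-algebras, $Y'\to S$ is again log.\ smooth, and $\underline{U}$ is still flat over $\underline{S}$ since only $Y$ has been changed. Thus one reduces to the case where $i$ is an exact closed immersion, in which the log.\ $m$-PD envelope has the same underlying $m$-PD scheme as the ordinary $m$-PD envelope of $\underline{U}\hookrightarrow\underline{Y}$ compatible with $(\fb,\gamma)$; it remains to describe the $\sO_U$-algebra structure of the latter.

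Next, I would apply the local structure theory of log.\ smooth morphisms to $U/S$ and to $Y/S$. Using the compatibility of charts along the exact immersion $i$, one arranges \'etale locally on $Y$ a system $t_1,\dots,t_n\in\sI$ lifting a basis of the conormal sheaf $\sI/\sI^2$ (locally free, by the conormal exact sequence of logarithmic differentials $\sI/\sI^2\to i^*\Omega^1_{Y/S}\to\Omega^1_{U/S}\to0$ and log.\ smoothness of $U$), with $\mathrm{gr}_\sI\sO_Y$ the polynomial ring $\sO_U[t_1,\dots,t_n]$. The hypothesis that $\underline{U}$ is flat over $\underline{S}$ then guarantees, in addition, that $\underline{U}\hookrightarrow\underline{Y}$ is transversally regular over $S$ and that the relevant $m$-PD structures base-change correctly, which is what is needed for the $m$-PD envelope of $\underline{U}\hookrightarrow\underline{Y}$ compatible with $(\fb,\gamma)$ to be given, exactly as in the non-logarithmic case \cite[1.5]{Berthelot:DMAI}, by the $m$-PD polynomial algebra $\sO_U\{t_1,\dots,t_n\}_{(m)}$ of \cite[1.5.1]{Berthelot:DMAI}. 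This is precisely the assertion.

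The delicate point is the choice of coordinates: one must arrange the charts of $S$, $U$ and $Y$ simultaneously so that the exact closed immersion $i$ is in standard position relative to a system of logarithmic coordinates, and check that the flatness of $\underline{U}$ over $\underline{S}$, together with the effect of the exactification on $Y$, suffices to put the immersion in the transversally regular position needed to reduce to the affine model. This is the logarithmic analogue of \cite[1.5]{Berthelot:DMAI}; no genuinely new idea beyond grafting Kato's local structure theory for log.\ smooth morphisms onto Berthelot's argument is required, but the monoid bookkeeping and the control of the exactification are where the work lies.
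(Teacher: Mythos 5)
Your proposal is correct and follows essentially the same route as the paper: reduce \'etale locally to an exact closed immersion with a good chart, use the conormal exact sequence of logarithmic differentials (with log.\ smoothness of $U$) to choose coordinates exhibiting $\sI$ as a regular ideal, and conclude via Berthelot's description \cite[1.5.3]{Berthelot:DMAI} of the $m$-PD envelope, with flatness of $\underline{U}$ over $\underline{S}$ ensuring the needed compatibility. The only cosmetic difference is that the paper makes the ``standard position'' explicit by the Kato-style cartesian diagram pulling the immersion back from the zero section of an affine space, which is exactly the chart bookkeeping you deferred.
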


\begin{proof}
We may assume that $i$ is an exact closed immersion and that there exists a chart $(P\to M_S, Q\to M_Y, Q\to P)$ of
$i$ which satisfies the condition in \cite[(3.5)]{Kato:LSFI}.
Then, this chart also induces one of $U\to S$.
If $\sI$ denotes the ideal of $\underline{U}\hookrightarrow \underline{Y}$,
the second fundamental exact sequence \cite[2.1.3]{Nakkajima-Shiho:WFLCCFOSV}
\[
0\to\sI/\sI^2\to\Omega^1_{Y/S}\to\Omega^1_{U/S}\to 0
\]
shows that we may take sections $x_1,\dots,x_{n}\in\sI$ and $m_1,\dots,m_{n'}\in M_U$ so that
$dx_1$, \dots, $dx_{n}$, $\dlog m_1$, \dots, $\dlog m_{n'}$ form a basis of $\Omega^1_{Y/S}$.

Therefore, as in the arguments in \cite[(3.13)]{Kato:LSFI}, we have the cartesian diagram
\[
\begin{tikzpicture}
    \matrix (m) [matrix of math nodes,
                 row sep=3em, column sep=3.5em, text height=1.5ex, text depth=0.25ex]
        { U & T \\
          S\times_{\bZ[Q]}\bZ[P] & (S\times_{\bZ[Q]}\bZ[P])\times_S\bA_S^n, \\
        };
    \path[->,font=\scriptsize]
        (m-1-1) edge node[auto] {$i$} (m-1-2)
                edge (m-2-1)
        (m-2-1) edge node[auto] {$i'$} (m-2-2)
        (m-1-2) edge (m-2-2);
\end{tikzpicture}
\]
where the vertical morphisms are strict and \'etale, and the lower horizontal map $i'$ is the base change of the zero section
$\underline{S}\to\bA_{\underline{S}}^{n'}$.
Now we have shown that the ideal $\sI$ is a regular ideal,
and the proof is a consequence of \cite[1.5.3]{Berthelot:DMAI}.
\end{proof}

Now, we definethe logarithmic version of $m$-crystalline site and of
restricted $m$-crystalline site \cite{Berthelot:LAM}.

\begin{definition}

(i) The {\em log.\ $m$-crystalline site} $\Crism(X/S,\fa,\fb,\gamma)$, or simply $\Crism(X/S)$, is the category of the
log.\ $m$-PD thickenings $(U,T,J,\delta)$ of an \'etale scheme $U$ of $X$ over $(S,\fa,\fb,\gamma)$,
morphisms defined in an obvious way,
and topology induced by the \'etale topology on $T$.
Its associated topos $(X/S,\fa,\fb,\gamma)\crism$, or $(X/S)\crism$, is called the {\em log.\ $m$-crystalline topos}.

(ii) The {\em restricted log.\ $m$-crystalline site} $\RCrism(X/S,\fa,\fb,\gamma)$, or $\RCrism(X/S)$, is the full subcategory
of $\Crism(X/S,\fa,\fb,\gamma)$ consisting of the fundamental log.\ $m$-PD thickenings,
which is equipped with the induced topology.
Its associated topos $(X/S,\fa,\fb,\gamma)\Rcrism$, or $(X/S)\Rcrism$, is called the {\em restricted log.\ $m$-crystalline topos}.
When $E$ is a sheaf in $\Crism(X/S)$ \resp{$\RCrism(X/S)$}, then for each log.\ $m$-PD thickening $(U,T,J,\delta)$
\resp{fundamental log.\ $m$-PD thickening $(U,T,J,\delta)$}, the sheaf on $T_{\et}$ induced by $E$ is denoted by
$E_{(U,T,J,\delta)}$ or simply by $E_T$.

(iii) The sheaf of rings
\[
(U,T,J,\delta)\mapsto \Gamma(\underline{T},\sO_T)
\]
in the topos $(X/S)\crism$ \resp{$(X/S)\Rcrism$} is called the {\em structure sheaf} of the site $\Crism(X/S)$ \resp{$\RCrism(X/S)$}, and denoted by $\sO_{X/(S,\fa,\fb,\gamma)}^{(m)}$ or by $\sO_{X/S}^{(m)}$.

\end{definition}

\begin{proposition}
Let $(S',\fa',\fb',\gamma')\to (S,\fa,\fb,\gamma)$ be an $m$-PD morphism from another fine log.\ $m$-PD scheme,
$X'$ a fine log.\ scheme over $S'$ and $f\colon X'\to X$ a morphism over $S$.
Then, there exists a morphism of topoi
\[
f\crism\colon (X'/S')\crism\to (X/S)\crism.
\]
\end{proposition}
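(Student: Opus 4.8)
The construction of the morphism of topoi is the standard "continuous functor" machinery applied to the log crystalline sites, so the plan is to produce a functor between the underlying sites that is continuous and left exact on covers, and then invoke the general topos-theoretic formalism.

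First I would define, for each object $(U',T',J',\delta')$ of $\Crism(X'/S')$, an object of $\Crism(X/S)$. The étale scheme $U'$ over $\underline{X'}$ maps, via $f$ composed with the structure morphism, to $\underline{X}$; but this composite need not be étale, so I cannot use $U'$ directly as an object lying over $X$. The usual remedy is to pass to the localized topos: one works with the functor in the other direction at the level of $m$-PD thickenings, sending $(U,T,J,\delta)$ in $\Crism(X/S)$ to the fiber product $(U\times_X X', \text{(its log }m\text{-PD thickening)})$ built from $T\times_S S'$, using the log $m$-PD envelope (Proposition-Definition, the right adjoint to the forgetful functor) to replace $T\times_S S'$ with an actual $m$-PD thickening of $U\times_X X'$ over $(S',\fa',\fb',\gamma')$. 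Concretely: given $(U,T,J,\delta)$, form $U_{X'}:=U\times_X X'$, take the closed immersion $U_{X'}\hookrightarrow T\times_S S'$ (this is a closed immersion of log schemes with $M_{U_{X'}}$ fine and $M_{T\times_S S'}$ coherent, hence an object of $\mathscr{C}_2'$ after base change of $S$ to $S'$), and let its log $m$-PD envelope, which is compatible with $(\fb',\gamma')$ since the $m$-PD structures are compatible along $S'\to S$, be the associated thickening. This defines a functor
\[
\rho\colon \Crism(X/S)\to \Crism(X'/S'),\qquad (U,T,J,\delta)\mapsto \bigl(U_{X'}, D^{(m)}_{U_{X'}}(T\times_S S')\bigr).
\]

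Next I would check that $\rho$ is continuous, i.e.\ that it sends covering families to covering families and commutes with the relevant fiber products. A covering of $(U,T,J,\delta)$ in $\Crism(X/S)$ is an étale covering $\{\underline{T}_i\to\underline{T}\}$; applying $-\times_S S'$ and then the log $m$-PD envelope is compatible with étale localization on $T$ (the envelope construction being local on the target, as used in Lemma \ref{lem:rest-D}), so $\rho$ carries covers to covers. Commutation with fiber products in the site follows because the $m$-PD envelope is a right adjoint, hence preserves the limits that are used to express fiber products of thickenings, together with the fact that $-\times_S S'$ preserves fiber products. Having verified these, the general formalism (SGA 4, or \cite{Berthelot:DMAI}) produces a morphism of topoi $f\crism\colon (X'/S')\crism\to(X/S)\crism$ whose inverse image functor is "compose with $\rho$" and whose direct image is the right Kan extension along $\rho$; the adjunction identities are automatic from the continuity.

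The main obstacle I expect is the verification that $\rho$ is well-defined as stated — specifically, that the fiber product $U\times_X X'$ together with $T\times_S S'$ genuinely lands in the source category of the log $m$-PD envelope functor (one must check $M_{U\times_X X'}$ is fine, which uses fineness of $X'$ and the fine-log-scheme fiber product, and that $U\times_X X'\hookrightarrow T\times_S S'$ is a closed immersion with coherent log structure on the target), and that the resulting $m$-PD structure is indeed compatible with $(\fb',\gamma')$. Once this bookkeeping with the log $m$-PD envelope is in place, continuity is a formal consequence of the adjunction in the Proposition-Definition and the stability of étale coverings under base change, so the morphism of topoi follows immediately. The construction is functorial in $(S',X',f)$ in the evident sense, which one records for later use but which requires no further argument beyond uniqueness of adjoints.
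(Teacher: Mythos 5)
Your construction breaks down at its very first step: for a general morphism $f\colon X'\to X$ the map $U\times_X X'\to T\times_S S'$ is \emph{not} a closed immersion, so the log.\ $m$-PD envelope you want to apply does not exist and the functor $\rho$ is simply not defined. Indeed, this map factors as $U\times_X X'\to U\times_S S'\hookrightarrow T\times_S S'$, and the first arrow is the base change along $U\times_S S'\to X\times_S S'$ of the canonical morphism $X'\to X\times_S S'$, which is a closed immersion only in very special situations (for instance when $f$ itself is a closed immersion). Already for $S'=S$, $X'=\bA^1_X$ with the pulled-back log structure, and $(U,T)=(X,X)$ with the trivial thickening, your map is the projection $\bA^1_X\to X$. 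Since the Proposition--Definition of Section \ref{ss:site} only produces envelopes for objects of $\mathscr{C}'_2$, i.e.\ for closed immersions, there is no object of $\Crism(X'/S')$ to assign, and the whole ``morphism of sites'' strategy in this form collapses; this is not the bookkeeping verification you defer to at the end, but a genuine failure of the approach.

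The standard construction, which is what the paper implicitly appeals to (it is the direct transcription of the classical case, cf.\ Berthelot--Ogus, Berthelot's thesis, and Kato's log version), avoids the problem by assigning to each object $(U,T,J,\delta)$ of $\Crism(X/S)$ not an object but a \emph{sheaf} on $\Crism(X'/S')$, namely
\[
(U',T',J',\delta')\longmapsto \bigl\{(g,h) : g\colon U'\to U \text{ compatible with } f,\ h\colon T'\to T \text{ an $m$-PD morphism over } S'\to S \text{ compatible with } g\bigr\}.
\]
One checks that this presheaf is a sheaf, that the resulting functor from $\Crism(X/S)$ to the topos $(X'/S')\crism$ carries covering families to epimorphic families and commutes with the relevant finite projective limits, and the general formalism then yields the morphism of topoi $f\crism$, whose direct image is the corresponding projective limit, $({f\crism}_*E')_{(U,T,J,\delta)}=\varprojlim E'_{(U',T',J',\delta')}$ taken over such pairs $(g,h)$. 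When a closed immersion is available (for example for ${i\crism}_*$ in Proposition \ref{thm:lift}, where $i$ is an exact closed immersion), this Hom-sheaf is representable by a log.\ $m$-PD envelope and a picture like yours is recovered; for an arbitrary $f$, however, the detour through non-representable sheaves is unavoidable.
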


Next, we discuss some fundamental functors.
Let $f\colon X\to S$ denote structure morphism. Then,
\[
u_{X/S}^{(m)}\colon (X/S)\crism\to X_{\et}
\]
denotes the projection of log.\ $m$-crystalline topos defined as in the classical case \cite[(6.4)]{Kato:LSFI}.
Its composite with $f_{\et}\colon X_{\et}\to S_{\et}$ is denoted by $f_{X/S}^{(m)}\colon (X/S)\crism\to S_{\et}$.
Moreover, we define the morphism of topoi $\bar{u}_{X/S}^{(m)}$ as
\[
\bar{u}_{X/S}^{(m)}:=u_{X/S}^{(m)}\circ Q_{X/S}^{(m)}\colon (X/S)\Rcrism\to X_{\et},
\]
where $Q_{X/S}^{(m)}\colon (X/S)\Rcrism\to(X/S)\crism$ denotes the obvious morphism of topoi \cite[IV 2.1.2]{Berthelot:CCSCP}.

\begin{lemma}
Let $E$ be a sheaf in $\Crism(X/S)$. Then, there exists a canonical isomorphism
\[
{u_{X/S}^{(m)}}_\ast(E)\to\left.\bar{u}^{(m)}_{X/S}\right._{\!\ast}\left({Q_{X/S}^{(m)}}^\ast(E)\right).
\]
\end{lemma}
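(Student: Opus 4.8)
The plan is to exploit the comparison between the $m$-crystalline topos and its restricted variant, exactly as in the classical case \cite[IV 2.2]{Berthelot:CCSCP}. The morphism $Q_{X/S}^{(m)}$ is the one induced by the inclusion of sites $\RCrism(X/S)\hookrightarrow\Crism(X/S)$, so there is always an adjunction unit $E\to {Q_{X/S}^{(m)}}_{\!\ast}{Q_{X/S}^{(m)}}^{\ast}(E)$. Applying ${u_{X/S}^{(m)}}_\ast$ and using $\bar u_{X/S}^{(m)}={u_{X/S}^{(m)}}\circ Q_{X/S}^{(m)}$, hence ${\bar u_{X/S}^{(m)}}_{\ast}={u_{X/S}^{(m)}}_{\ast}\circ{Q_{X/S}^{(m)}}_{\!\ast}$, we obtain a canonical map
\[
{u_{X/S}^{(m)}}_\ast(E)\longrightarrow {u_{X/S}^{(m)}}_\ast\left({Q_{X/S}^{(m)}}_{\!\ast}{Q_{X/S}^{(m)}}^{\ast}(E)\right)={\bar u_{X/S}^{(m)}}_{\ast}\left({Q_{X/S}^{(m)}}^{\ast}(E)\right),
\]
which is the arrow in the statement; so the content is only its being an isomorphism.

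First I would reduce, by the sheaf condition, to checking the statement on sections over each object, i.e.\ to showing that for every log.\ $m$-PD thickening $(U,T,J,\delta)$ the canonical map $\Gamma((X/S)_{|(U,T,J,\delta)},E)\to\Gamma((X/S)_{\mathrm{R}|(U,T,J,\delta)},{Q_{X/S}^{(m)}}^{\ast}E)$ is bijective. The key point is that the fundamental log.\ $m$-PD thickenings are \emph{cofinal} (in fact form a base for the topology) among all log.\ $m$-PD thickenings: given any $(U,T,J,\delta)$, étale locally on $\underline{U}$ there is a closed $S$-immersion of $U$ into a fine log.\ smooth $S$-scheme $Y$ (take a chart and embed into a suitable logarithmic affine space as in the proof of Lemma \ref{lem:rest-D}), and the log.\ $m$-PD envelope $D^{(m)}_U(Y)$ receives a map from $T$; thus every object of $\Crism(X/S)$ is, étale locally, covered by objects of $\RCrism(X/S)$, and moreover fibre products in $\Crism(X/S)$ of fundamental thickenings are again, locally, dominated by fundamental ones.

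Consequently the inclusion of sites satisfies the conditions ensuring that restriction induces an equivalence on the categories of sheaves that is compatible with global sections — concretely, ${Q_{X/S}^{(m)}}^{\ast}$ is fully faithful and ${Q_{X/S}^{(m)}}_{\!\ast}{Q_{X/S}^{(m)}}^{\ast}\simeq\mathrm{id}$, cf.\ \cite[III 4.1]{SGA4} — and then applying ${u_{X/S}^{(m)}}_\ast$ gives the claimed isomorphism. I expect the main obstacle to be the bookkeeping in the cofinality argument: one must check not just that fundamental thickenings cover every thickening, but that they are stable enough under the relevant fibre products (so that they compute Čech cohomology correctly), and that the embeddings into log.\ smooth schemes can be chosen compatibly over morphisms. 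Both are handled by the same local construction used in Lemma \ref{lem:rest-D}, namely choosing charts satisfying the condition of \cite[(3.5)]{Kato:LSFI} and embedding into products of logarithmic affine spaces; once that is in place, the rest is the standard formal comparison of a site with a base for its topology.
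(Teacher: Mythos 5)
Your construction of the canonical map is fine, but the argument that it is an isomorphism rests on a claim that is false: fundamental log.\ $m$-PD thickenings do \emph{not} form a base of the topology of $\Crism(X/S)$. A covering of an object $(U,T,J,\delta)$ consists of morphisms $(U_i,T_i,J_i,\delta_i)\to(U,T,J,\delta)$ with $T_i\to T$ \'etale and jointly surjective, so your claim would require fundamental thickenings mapping \emph{onto} $T$ \'etale-locally. What the local embedding of $U$ into a fine log.\ smooth $Y$ actually produces is a morphism in the opposite direction: by the universal property of the log.\ $m$-PD envelope, $(U,T,J,\delta)$ admits, \'etale locally, a morphism \emph{to} the fundamental thickening $(U,D^{(m)}_U(Y))$. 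A general thickening (say a square-zero extension of $U$) is not \'etale-locally isomorphic to, nor covered by, $m$-PD envelopes, whose structure sheaves are $m$-PD polynomial algebras by Lemma \ref{lem:rest-D}. Consequently the comparison lemma of SGA~4, III does not apply, and the assertions that ${Q_{X/S}^{(m)}}^{\ast}$ is fully faithful and that the unit $E\to{Q_{X/S}^{(m)}}_{\ast}{Q_{X/S}^{(m)}}^{\ast}(E)$ is an isomorphism are unjustified; if they held, $(X/S)\crism$ and $(X/S)\Rcrism$ would essentially be identified, and the very reason for introducing the restricted site (for instance, that $\bar{L}^{(m)}(\sF)$ is only a \emph{restricted} log.\ $m$-crystal in Proposition \ref{thm:linearization}) would disappear. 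The lemma is a strictly weaker statement than $E\simeq Q_*Q^*E$, and only the weaker statement is true in this generality.

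The correct use of your local embeddings is the one the paper makes, following \cite[IV 2.3.2, 2.3.5]{Berthelot:CCSCP}: the ``weak finality'' of fundamental thickenings lets one compute the direct image as a finite limit over them. Precisely, for every fundamental thickening $(U,D^{(m)}_U(Y),J,\delta)$ there is an exact sequence identifying ${u^{(m)}_{X/S}}_{\ast}(E)|_U$ with the equalizer of the two natural maps $E_{D^{(m)}_U(Y)}\to E_{D^{(m)}_U(Y^2)}$; this expression involves only the values of $E$ on fundamental thickenings, hence is literally the same for ${Q^{(m)}_{X/S}}^{\ast}(E)$ on the restricted site, and comparing the two equalizers gives the isomorphism. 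Your cofinality intuition is exactly what makes this equalizer description valid --- every thickening maps \'etale-locally to $D^{(m)}_U(Y)$, and two such maps are compared through $D^{(m)}_U(Y^2)$ --- but it enters as a weak finality statement used to compute a limit, not as the statement that fundamental thickenings topologically generate $\Crism(X/S)$.
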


\begin{proof}
For each fundamental thickening $(U,D_U^{(m)}(Y),J,\delta)$ in $\RCrism(X/S)$, where $Y$ is a fine log.\ smooth scheme over $S$,
we have an exact sequence
\begin{equation}
    \begin{tikzpicture}
        \matrix (m) [matrix of math nodes, row sep=3em, column sep=2.5em, text height=1.5ex, text depth=0.3ex]
        { 
        {u_{X/S}^{(m)}}_\ast(E)|_U & E_{D_U^{(m)}(Y)} & E_{D_U^{(m)}(Y^2)}. \\
        };
        \path[->]
        (m-1-1) edge (m-1-2)
        (m-1-2.5) edge (m-1-3.175)
        (m-1-2.-5) edge (m-1-3.185);
    \end{tikzpicture}
    \label{eqn:fund_thick}
\end{equation}
Indeed, the proof for the classical case \cite[IV 2.3.2]{Berthelot:CCSCP} can be generalized directly.

Now, the lemma is proved as in \cite[IV 2.3.5]{Berthelot:CCSCP}.
\end{proof}

\begin{proposition}
\label{thm:restricted}
Let $M^{\bullet}$ be a complex of $\sO_{X/S}^{(m)}$-modules which is bounded below.
Then, the canonical morphism
\[
\bR{u_{X/S}^{(m)}}_\ast(M^{\bullet})\to \bR\left.\!\!\bar{u}_{X/S}^{(m)}\right._{\!\ast}\left({Q_{X/S}^{(m)}}^\ast(M^{\bullet})\right)
\]
is an isomorphism in $D^+(X_{\et}, \sO_X)$.
\end{proposition}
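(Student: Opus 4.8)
The plan is to reduce the statement to a computation on the restricted site by the standard dévissage, and then to invoke the previous lemma sheaf-by-sheaf in the complex. First I would reduce to the case where $M^{\bullet}=M$ is a single $\sO_{X/S}^{(m)}$-module placed in degree $0$: since the complex is bounded below, a hyper-cohomology spectral sequence argument (or an induction on the amplitude together with the usual mapping-cone/truncation technique) lets one pass from the termwise statement to the statement for complexes, provided $\bR Q_{X/S}^{(m)\,*}$ and $\bR\bar u_{X/S}^{(m)}{}_{*}$ commute with the relevant direct sums and truncations, which they do because these are morphisms of topoi. Concretely, I would consider the stupid filtration $\sigma_{\geq n}M^{\bullet}$, observe that both sides send distinguished triangles to distinguished triangles, and conclude by the five lemma and passage to the limit over $n$ (using boundedness below so that in each degree the limit stabilizes).

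Next, for a single module $M$, the key point is to identify $\bR Q_{X/S}^{(m)\,*}M$ with $Q_{X/S}^{(m)\,*}M$, i.e.\ to show that the higher derived functors of $Q_{X/S}^{(m)}{}_{*}$, or equivalently that the unit $M\to \bR Q_{X/S}^{(m)}{}_{*}Q_{X/S}^{(m)\,*}M$, behave well. Here I would argue exactly as in \cite[IV 2.3.2, IV 2.3.5]{Berthelot:CCSCP}: the exact sequence \eqref{eqn:fund_thick} exhibits, locally on a fundamental thickening $(U,D_U^{(m)}(Y),J,\delta)$, the sections of ${u_{X/S}^{(m)}}_{*}(E)$ as an equalizer of the values of $E$ on the two projections $Y^{2}_{/S}\rightrightarrows Y$; the cosimplicial object attached to the Čech–Alexander-type complex of these fundamental thickenings computes $\bR\bar u_{X/S}^{(m)}{}_{*}$, and the classical acyclicity argument (the relevant cohomology groups of the structure sheaf on an $m$-PD polynomial algebra vanish, which follows from Lemma~\ref{lem:rest-D} and the description of $\sO_{D_U^{(m)}(Y)}$ as $\sO_U\{t_1,\dots,t_n\}_{(m)}$) shows that the Čech complex resolves ${u_{X/S}^{(m)}}_{*}(E)$. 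Combining this with the previous lemma, which already furnishes the isomorphism on $H^{0}$, gives the isomorphism of the derived pushforwards for a single module.

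The main obstacle I expect is the bookkeeping needed to promote the $H^{0}$-statement of the preceding lemma to a full derived statement: one must check that the natural map of complexes is not merely an isomorphism on $H^{0}$ but a quasi-isomorphism, which requires a genuine cohomological-descent argument for the covering of the final object of $(X/S)\crism$ by the fundamental thickenings, together with the vanishing of the higher crystalline cohomology of the structure sheaf on each fundamental thickening. The first of these is formal once one knows that $\RCrism(X/S)$ is, in the appropriate sense, a basis for the topology of $\Crism(X/S)$ (which is the content of the local existence of fundamental thickenings via Lemma~\ref{lem:rest-D}); the second is where the level-$m$ features enter, since one needs the acyclicity of $\sO_U\{t_1,\dots,t_n\}_{(m)}$-type algebras rather than ordinary PD polynomial algebras, and this is exactly the input supplied by \cite[1.5.3]{Berthelot:DMAI} as used in the proof of Lemma~\ref{lem:rest-D}. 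Once these two ingredients are in place, the proof is a direct transcription of \cite[IV 2.3.5]{Berthelot:CCSCP}, with the log.\ and level-$m$ modifications being routine.
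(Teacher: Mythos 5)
Your overall route coincides with the paper's: both sides are computed by the \v{C}ech--Alexander complex built from the fundamental thickenings $D^{(m)}_U(Y^{n+1})$, exactly as in \cite[V 1.2.5]{Berthelot:CCSCP}, and the punchline (which you should state explicitly) is that the \v{C}ech--Alexander complex of ${Q_{X/S}^{(m)}}^*(M^{\bullet})$ is \emph{literally the same complex} as that of $M^{\bullet}$, because every thickening occurring in it is fundamental, hence an object of $\RCrism(X/S)$, where ${Q_{X/S}^{(m)}}^*(M^{\bullet})$ has the same values as $M^{\bullet}$. (Your preliminary reduction to a single module is harmless but unnecessary: the \v{C}ech--Alexander argument applies directly to a bounded-below complex.)

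However, the two statements you single out as the key inputs are not the right ones, and as written they leave a genuine gap. First, the proposition does not require the unit $M\to\bR{Q_{X/S}^{(m)}}_*{Q_{X/S}^{(m)}}^*M$ to be an isomorphism; that is strictly stronger, and for an arbitrary $\sO_{X/S}^{(m)}$-module there is no reason for it to hold, since ${Q_{X/S}^{(m)}}^*$ forgets the values of $M$ on non-fundamental thickenings and these cannot be recovered (also, identifying ``$\bR {Q_{X/S}^{(m)}}^*M$'' with ${Q_{X/S}^{(m)}}^*M$ is vacuous, inverse images being exact). The comparison is made only after applying $\bR{u_{X/S}^{(m)}}_*$, and the \v{C}ech--Alexander computation yields it without any statement about $\bR Q_*Q^*$. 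Second, the acyclicity you invoke --- vanishing of higher cohomology of the structure sheaf of an $m$-PD polynomial algebra, attributed to Lemma~\ref{lem:rest-D} and \cite[1.5.3]{Berthelot:DMAI} --- is neither supplied by those results nor needed: they are ring-theoretic structure statements (regularity of the ideal, local description of the envelope) and contain no Poincar\'e-type vanishing, and no such vanishing enters the argument of \cite[V 1.2.5]{Berthelot:CCSCP}. What that argument actually needs is (i) that every object of $\Crism(X/S)$ locally admits a morphism to a fundamental thickening, so that the fundamental thickenings cover the final object of the topos, and (ii) the identification of the pushforward on the localized topoi, which is what the exact sequence (\ref{eqn:fund_thick}) (and its continuation to a full resolution) encodes; this works for arbitrary bounded-below complexes of modules, with no level-$m$-specific acyclicity. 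Replacing your two ``key points'' by this descent argument, run both on $\Crism(X/S)$ and on $\RCrism(X/S)$, and adding the coincidence of the two \v{C}ech--Alexander complexes noted above, closes the gap and recovers the paper's proof.
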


\begin{proof}
The source of this morphism is quasi-isomorphic to the \v{C}ech--Alexander complex of $M^{\bullet}$,
which can be proved just as in \cite[V 1.2.5]{Berthelot:CCSCP} by using (\ref{eqn:fund_thick}).
Similarly, the target is quasi-isomorphic to that of ${Q_{X/S}^{(m)}}^*(M^{\bullet})$,
which is nothing other than the \v{C}ech--Alexander complex  of $M^{\bullet}$.
\end{proof}

\section{Crystals, differential operators  and stratifications.}

In this section, we discuss the notion of log.\ $m$-crystal, log.\ hyper $m$-PD differential operator and log.\ hyper $m$-PD stratification.
Let $(S,\fa,\fb,\gamma)$ be a fine log.\ $m$-PD scheme, and $X$ a fine log.\ scheme over $S$ such that
the $m$-PD structure $(\fb,\gamma)$ extends to $\sO_X$.

\begin{definition}
Let $E$ be an $\sO_{X/S}^{(m)}$-module in $(X/S)\crism$
\resp{$\sO_{X/S}^{(m)}$-module in $(X/S)\Rcrism$}.
Then, $E$ is called a {\em log.\ $m$-crystal} \resp{a {\em restricted log.\ $m$-crystal}}
in $\sO_{X/S}^{(m)}$-modules if for all morphism $f\colon (U,T,J,\delta)\to (U',T',J',\delta')$
of $\Crism(X/S)$ \resp{$\RCrism(X/S)$}, 
the canonical morphism
\[
f^*(E_{(U',T',J',\delta')})\to E_{(U,T,J,\delta)}
\]
is an isomorphism.
\end{definition}

In this article, $P_{X/S}^{(m)}$ denotes the log.\ $m$-PD envelope of the diagonal immersion $X\hookrightarrow X\times_SX$,
and $\P_{X/S}^{(m)}$ denotes the structure sheaf of $\underline{P_{X/S}^{(m)}}$.
We always regard $X\times_SX$ and $P_{X/S}^{(m)}$ as log.\ schemes over $X$ by the first projection.

\begin{definition}
    Let $M$ and $N$ be two $\sO_X$-modules. Then, a {\em log.\ hyper $m$-PD differential operator} from $M$ to $N$
    is an $\sO_X$-linear morphism
    \[
        \P_{X/S}^{(m)}\otimes_{\sO_X} M\to N.
    \]
\end{definition}

\begin{definition}
Let $M$ be an $\sO_X$-module. Then, a {\em log.\ hyper $m$-PD stratification} on $M$ is a $\P_{X/S}^{(m)}$-linear isomorphism
\[
    \varepsilon\colon \P_{X/S}^{(m)}\otimes_{\sO_X} M\to M\otimes_{\sO_X} \P_{X/S}^{(m)}
\]
which induces the identity map on $M$ by passing the quotient $\P_{X/S}^{(m)}\to\sO_{X}$
and satisfies the usual cocycle condition.
\end{definition}

Now, let $(\fa_0,\fb_0,\gamma_0)$ be a quasi-coherent $m$-PD sub-ideal of $\fa$,
and let $S_0\hookrightarrow S$ denote the exact closed immersion defined by $\fa_0$, and
$i\colon X_0\hookrightarrow X$ its base change by $X\to S$.
We assume that $X$ is log.\ smooth over $S$ and that the underlying scheme $\underline{X}$ is flat over $\underline{S}$.

The first important proposition in this situation is the following one,
which is proved as in non-logarithmic situation \cite[1.3.4]{Miyatani:FCCHL}.

\begin{proposition}
\label{thm:lift}
In the situation above, the functor
\[
{i\crism}_*\colon (X_0/S)\crism\to (X/S)\crism
\]
is exact. Moreover, the image of the structure sheaf $\sO_{X_0/S}^{(m)}$ under this functor is isomorphic to $\sO_{X/S}^{(m)}$,
and the image of a log.\ $m$-crystal in $\sO_{X_0/S}^{(m)}$-modules is a log.\ $m$-crystal in $\sO_{X/S}^{(m)}$-modules.
\end{proposition}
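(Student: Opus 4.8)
The plan is to reduce everything to a local computation on the étale site of $X$, following the template of the non-logarithmic case \cite[1.3.4]{Miyatani:FCCHL}, but keeping track of the log structures. First I would recall that, since $\underline{X}$ is flat over $\underline{S}$ and the $m$-PD structure $(\fa_0,\fb_0,\gamma_0)$ is a sub-$m$-PD-ideal of $(\fa,\fb,\gamma)$, for any log.\ $m$-PD thickening $(U,T,J,\delta)$ of an étale $U/X$ the fibre product $U_0 := U\times_X X_0$ sits in a log.\ $m$-PD thickening $(U_0,T,J',\delta')$ of $U_0/S$, where the new $m$-PD ideal is generated by $J$ and the pullback of $\fa_0$; exactness of the $m$-PD structure on $\sO_X$ along $X_0\hookrightarrow X$ is exactly what makes this well-defined, and it shows that the functor $i\crism$ on the underlying sites is a fully faithful embedding whose essential image is closed under the relevant operations. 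Concretely, $i\crism_*$ is computed objectwise: $(i\crism_* F)_{(U,T,J,\delta)} = F_{(U_0,T,J',\delta')}$, so it commutes with all finite limits and with filtered colimits, hence is exact; no spectral sequence or cohomological vanishing is needed.

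Next I would identify the structure sheaf. For a thickening $(U,T,J,\delta)$ we have $(\sO_{X_0/S}^{(m)})_{(U_0,T,J',\delta')} = \sO_T = (\sO_{X/S}^{(m)})_{(U,T,J,\delta)}$ by construction, so $i\crism_*\sO_{X_0/S}^{(m)} \cong \sO_{X/S}^{(m)}$ as sheaves of rings; this is immediate once the description of $i\crism_*$ above is in hand.

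For the crystal statement, let $E$ be a log.\ $m$-crystal in $\sO_{X_0/S}^{(m)}$-modules and set $E' := i\crism_*E$. Given a morphism $f\colon (U,T,J,\delta)\to (V,T'',J'',\delta'')$ in $\Crism(X/S)$, base change by $X_0\hookrightarrow X$ gives a morphism $f_0\colon (U_0,T,\dots)\to (V_0,T'',\dots)$ in $\Crism(X_0/S)$ lying over the same map of underlying schemes $\underline{T}\to\underline{T''}$. Then $E'_{(U,T,\dots)} = E_{(U_0,T,\dots)}$ and $E'_{(V,T'',\dots)} = E_{(V_0,T'',\dots)}$, and the transition map of $E'$ is literally the transition map of $E$ for $f_0$, which is an isomorphism because $E$ is a crystal. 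Hence $E'$ is a log.\ $m$-crystal in $\sO_{X/S}^{(m)}$-modules.

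I expect the only real subtlety — and the step that genuinely uses the hypotheses — to be the verification that the assignment $(U,T,J,\delta)\mapsto (U_0,T,J',\delta')$ is a well-defined functor $\Crism(X/S)\to\Crism(X_0/S)$, i.e.\ that $J' = J + (\text{image of }\fa_0)\sO_T$ genuinely carries a compatible $m$-PD structure. This is where flatness of $\underline{X}/\underline{S}$ and the fact that $(\fb_0,\gamma_0)$ is an $m$-PD sub-ideal enter: flatness guarantees that the ideal of $\underline{X_0}$ in $\underline{X}$ is, étale-locally, generated by a regular sequence coming from $\underline{S_0}\hookrightarrow\underline{S}$ (as in Lemma \ref{lem:rest-D}), so the two $m$-PD structures glue on the overlap $J\cap(\text{image of }\fa_0)$, and compatibility with $(\fb,\gamma)$ is automatic. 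Once this functor is in place the exactness and the two identifications all follow formally, exactly as in \cite[1.3.4]{Miyatani:FCCHL}; the proof is a transcription of that argument with the log structure $M_X|_{\underline U}$ carried along unchanged, using that $i$ is strict so that no new subtleties about charts arise.
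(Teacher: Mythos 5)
Your overall route is the same as the paper's, which simply invokes the non-logarithmic argument of \cite[1.3.4]{Miyatani:FCCHL}: for each $(U,T,J,\delta)\in\Crism(X/S)$ one checks that the same scheme $T$, together with $U_0=U\times_XX_0$ and the ideal $J'=J+\fa_0\sO_T$, is an object of $\Crism(X_0/S)$, one identifies $({i\crism}_*F)_{(U,T,J,\delta)}$ with $F_{(U_0,T,J',\delta')}$, and then exactness (checked through the restrictions to each $T_{\et}$, which are exact and detect exactness --- your phrase ``finite limits and filtered colimits, hence exact'' is not by itself a valid deduction, but the objectwise description does the job), the identification of structure sheaves, and the crystal property all follow formally. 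Up to that point your outline is fine and matches the intended proof.

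However, the justification you give for the step you yourself single out as the crux --- that $J'$ genuinely carries an $m$-PD structure compatible with $(\fb,\gamma)$ --- is wrong. The ideal $\fa_0\sO_X$ (and likewise $\fa_0\sO_T$) is not, even \'etale locally, generated by a regular sequence: since $p$ is nilpotent, every element $x$ of the $m$-PD ideal $\fa_0$ satisfies $x^{p^m}\in\fb_0$ and hence is nilpotent, so it is a zerodivisor wherever it is nonzero (already for $S$ with $\sO_S=\bZ/p^2\bZ$ and $\fa_0=(p)$ there is no regular generator). Lemma \ref{lem:rest-D} concerns the ideal of $U$ in a log.\ smooth $Y$, not $\fa_0\sO_X$, and regularity plays no role in the gluing in any case. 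The correct mechanism is Berthelot's theory of extension and gluing of $m$-PD structures \cite[1.3]{Berthelot:DMAI}: the compatibility of $(J,\delta)$ with $(\fb,\gamma)$, which is built into the definition of a log.\ $m$-PD thickening, together with the hypothesis that $(\fa_0,\fb_0,\gamma_0)$ is a sub-$m$-PD-ideal of $(\fa,\fb,\gamma)$, forces $\delta$ and the extension of $\gamma_0$ to agree where required and yields an $m$-PD structure $\delta'$ on $J+\fa_0\sO_T$ compatible with $(\fb,\gamma)$, functorially in $(U,T,J,\delta)$. Flatness of $\underline{X}/\underline{S}$ is not what makes this gluing work; it only guarantees (as is already a standing assumption) that $(\fb,\gamma)$ extends to $\sO_X$, and it is really needed for the later statements (Proposition \ref{thm:crystal}, Corollary \ref{cor:cryst_equiv}), not here. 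With the justification of this step replaced accordingly, the rest of your argument goes through as in \cite[1.3.4]{Miyatani:FCCHL}.
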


\begin{proposition}
\label{thm:crystal}
    In the situation above, the following categories are equivalent:

    {\rm (i)} the category of the log.\ $m$-crystals in $\sO_{X_0/S}^{(m)}$-modules;

    {\rm (i)'} the category of the restricted log.\ $m$-crystals in $\sO_{X_0/S}^{(m)}$-modules;

    {\rm (ii)} the category of the $\sO_X$-modules equipped with a log.\ hyper $m$-PD stratification.
\end{proposition}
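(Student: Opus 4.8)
The proof will establish the chain of equivalences (i) $\Leftrightarrow$ (i)' $\Leftrightarrow$ (ii) by treating each link separately. For (i) $\Leftrightarrow$ (i)', the forgetful/restriction functor induced by $Q_{X/S}^{(m)}$ gives a functor from log.\ $m$-crystals to restricted log.\ $m$-crystals; I would construct a quasi-inverse by the standard device of describing a restricted crystal via descent data along the cover of $(X_0/S)\crism$ by fundamental thickenings. Concretely, any object $(U,T,J,\delta)$ of $\Crism(X_0/S)$ admits, \'etale locally, an embedding into a log.\ smooth $S$-scheme and hence receives a morphism from a fundamental thickening; a restricted crystal therefore extends uniquely (up to canonical isomorphism) to a crystal on all of $\Crism(X_0/S)$, the gluing being controlled by the exact sequence \eqref{eqn:fund_thick} and the cocycle condition. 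This is the logarithmic analogue of \cite[IV 2.x]{Berthelot:CCSCP} / \cite[1.3.x]{Miyatani:FCCHL}, and I expect it to go through with only bookkeeping changes.

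For (i)' $\Leftrightarrow$ (ii), the key observation is that, by Proposition~\ref{thm:lift}, a log.\ $m$-crystal in $\sO_{X_0/S}^{(m)}$-modules is the same as a log.\ $m$-crystal in $\sO_{X/S}^{(m)}$-modules (since ${i\crism}_*$ identifies the structure sheaves and preserves crystals, and is exact), so it suffices to relate log.\ $m$-crystals on $X$ with $\sO_X$-modules carrying a log.\ hyper $m$-PD stratification. Here the dictionary is the usual one: given a crystal $E$, evaluate it on the two fundamental thickenings $X\hookrightarrow P_{X/S}^{(m)}$ obtained from the two projections $P_{X/S}^{(m)}\to X$; the crystal property for the two projection morphisms supplies $\sO_X$-linear isomorphisms between $E_X\otimes_{\sO_X}\P_{X/S}^{(m)}$ and $\P_{X/S}^{(m)}\otimes_{\sO_X}E_X$, i.e.\ exactly the datum $\varepsilon$ of a log.\ hyper $m$-PD stratification, and the cocycle condition on $\varepsilon$ comes from functoriality of $E$ applied to $P_{X/S}^{(m),2}$ (the triple self-product). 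Conversely, from a pair $(M,\varepsilon)$ one reconstructs a presheaf on fundamental thickenings $(U,D_U^{(m)}(Y),J,\delta)$ by pulling back $M$ along $U\to X$ and twisting by $\varepsilon$ via the classifying map $D_U^{(m)}(Y)\to P_{X/S}^{(m)}$; the cocycle condition guarantees independence of the chosen embedding and compatibility with morphisms, so this presheaf is a (restricted) crystal. One then checks the two constructions are mutually quasi-inverse.

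The main technical obstacle will be verifying that $P_{X/S}^{(m)}$ and its self-products genuinely serve as the ``universal'' fundamental thickenings in the logarithmic setting — that is, that for any fundamental thickening $(U,D_U^{(m)}(Y),J,\delta)$ the diagonal data factor through $P_{X/S}^{(m)}$ in the expected way, and that $\P_{X/S}^{(m)}$ has the explicit $m$-PD polynomial description needed to make the tensor-product identifications. This is where Lemma~\ref{lem:rest-D} does the heavy lifting: it gives, \'etale locally, $\sO_{D_U^{(m)}(Y)}\cong\sO_U\{t_1,\dots,t_n\}_{(m)}$, and in particular $\P_{X/S}^{(m)}$ is locally a log.\ $m$-PD polynomial algebra in the differentials $dx_i$ and $\dlog m_j$; flatness of $\underline{X}/\underline{S}$ ensures these are honest free $m$-PD algebras and that the relevant tensor products stay flat, so the isomorphisms above are well-defined and the cocycle identity can be checked on generators. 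The log.\ smoothness and flatness hypotheses are used exactly here. Once this local structure is in hand, the remaining verifications (compatibility with morphisms of thickenings, the two functors being inverse, exactness/additivity) are formal and parallel \cite[1.3.x]{Miyatani:FCCHL} and \cite[(6.x)]{Kato:LSFI}.
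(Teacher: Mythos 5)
Your reduction of (i)$'$ to crystals on $X$ is where the argument breaks down. You invoke Proposition \ref{thm:lift} to say that a log.\ $m$-crystal in $\sO_{X_0/S}^{(m)}$-modules ``is the same as'' a log.\ $m$-crystal in $\sO_{X/S}^{(m)}$-modules, but that proposition only asserts that ${i\crism}_*$ is exact, identifies the structure sheaves, and carries crystals to crystals; it does not assert that it induces an equivalence on categories of crystals. That equivalence is precisely Corollary \ref{cor:cryst_equiv}, which the paper deduces \emph{from} Proposition \ref{thm:crystal} by the argument of \cite[6.7]{Berthelot-Ogus:NCC}, itself resting on the crystal--stratification dictionary of \cite[6.6]{Berthelot-Ogus:NCC}. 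So as written your route is circular, unless you supply an independent proof of the equivalence between crystals on $X_0$ and crystals on $X$, which you do not.

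The missing idea is to relate crystals on $X_0$ \emph{directly} to $\P_{X/S}^{(m)}$-stratifications, and this is what the paper does: since $(\fa_0,\fb_0,\gamma_0)$ is an $m$-PD sub-ideal of $\fa$, the ideal of $X_0$ in $X$ already carries a compatible $m$-PD structure, so $D^{(m)}_{X_0}(X)=X$ and $D^{(m)}_{X_0}(X^{(n+1)}_{/S})=P^{(m)}_{X/S}(n)$ for $n\geq 1$. With these identifications the construction of \cite[6.6]{Berthelot-Ogus:NCC} applies with source the category (i)$'$, giving a functor to (ii); the classical construction of a crystal from a stratified module gives a functor from (ii) to (i); composing with the evident restriction from (i) to (i)$'$ closes the loop and yields all three equivalences at once. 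This also spares you the direct descent argument you sketch for (i)$\Leftrightarrow$(i)$'$, which as stated (``$T$ receives a morphism from a fundamental thickening, hence the crystal extends'') is itself only a gesture at the genuine verification. Finally, the weight you place on Lemma \ref{lem:rest-D} is misplaced for this proposition: the crux is the computation of the envelopes of $X_0$ above, not the local $m$-PD polynomial description of $\P_{X/S}^{(m)}$.
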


\begin{proof}
First, note that $D_{X_0}(X)=X$ and that $D_{X_0}(X^{(n+1)}_{/S})=P_X(n)$ for each $n\geq 1$.
The equivalence of categories from the category of (i)' to that of (ii) is constructed in the usual manner \cite[6.6]{Berthelot-Ogus:NCC}
(in loc. cit., the source of the functor is the category of (i), but the same argument works).
Construction of the equivalence of categories from the category of (ii) to that of (i) is again classical.
\end{proof}

\begin{corollary}
    The functor ${i\crism}_\ast$ in Proposition \ref{thm:lift} induces an equivalence of categories
    from the category of the log.\ $m$-crystals in $\sO_{X_0/S}^{(m)}$-modules to
    that of the log.\ $m$-crystals in $\sO_{X/S}^{(m)}$-modules.
    The quasi-inverse of this functor is ${i\crism}^\ast$.
    \label{cor:cryst_equiv}
\end{corollary}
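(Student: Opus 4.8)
The plan is to deduce this from Propositions~\ref{thm:lift} and~\ref{thm:crystal} by reducing both sides to the category (ii) of $\sO_X$-modules equipped with a log.\ hyper $m$-PD stratification. The observation that makes this work is that Proposition~\ref{thm:crystal} applies not only to $i\colon X_0\hookrightarrow X$ but also, verbatim, to the identity immersion $X\hookrightarrow X$, viewed as the exact closed immersion cut out by the zero sub-$m$-PD-ideal of $\fa$; this is legitimate because $X$ is log.\ smooth over $S$ and $\underline X$ is flat over $\underline S$. Consequently both the category of log.\ $m$-crystals in $\sO_{X_0/S}^{(m)}$-modules and the category of log.\ $m$-crystals in $\sO_{X/S}^{(m)}$-modules are equivalent to the \emph{same} category (ii); the essential point is that the log.\ smooth $S$-scheme used in the two applications of Proposition~\ref{thm:crystal} is literally the scheme $X$, so the target category does not change.

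The next step is to make this concrete and to identify ${i\crism}_\ast$ with the identity functor of (ii). Recall from the proof of Proposition~\ref{thm:crystal} the identifications $D^{(m)}_{X_0}(X)=X$ and $D^{(m)}_{X_0}(X^{(2)}_{/S})=P_{X/S}^{(m)}$: under the equivalence of Proposition~\ref{thm:crystal} a log.\ $m$-crystal $E$ on $X_0/S$ corresponds to the $\sO_X$-module $E_{(X_0,X,\dots)}$ together with the stratification obtained from the crystal property applied to the two projections $X^{(2)}_{/S}\to X$ (after passing to $m$-PD envelopes along $X_0$); applying Proposition~\ref{thm:crystal} to $X\hookrightarrow X$ instead, a log.\ $m$-crystal $E'$ on $X/S$ corresponds to $E'_{(X,X,0)}$ with the stratification coming from $P_{X/S}^{(m)}=D^{(m)}_X(X^{(2)}_{/S})$. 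Now $i\crism$ is induced by the functor $\Crism(X/S)\to\Crism(X_0/S)$ sending a thickening $(U,T,J,\delta)$ to $(U\times_X X_0,T,\dots)$, so that $\left({i\crism}_\ast E\right)_{(U,T,J,\delta)}=E_{(U\times_X X_0,T,\dots)}$ for every log.\ $m$-PD thickening of $X/S$ (a routine check, compatible with passing to the restricted site). Evaluating this at $(X,X,0)$ and at $(X,P_{X/S}^{(m)},\dots)$ and using $X\times_X X_0=X_0$ together with the two identifications above shows that ${i\crism}_\ast E$ and $E$ give rise to one and the same object of (ii), stratification included. Hence ${i\crism}_\ast$ induces an equivalence on the categories of log.\ $m$-crystals.

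It remains to recognise the quasi-inverse as ${i\crism}^\ast$. Here I would use that ${i\crism}^\ast$ is left adjoint to ${i\crism}_\ast$ on all $\sO$-modules and that $i\crism$ is a morphism of ringed topoi with ${i\crism}_\ast\sO_{X_0/S}^{(m)}\cong\sO_{X/S}^{(m)}$ (Proposition~\ref{thm:lift}). From this I would first establish that ${i\crism}^\ast$ carries a log.\ $m$-crystal to a log.\ $m$-crystal and that the counit ${i\crism}^\ast{i\crism}_\ast E\to E$ is an isomorphism for every log.\ $m$-crystal $E$ on $X_0/S$. Granting this, if $G$ denotes the quasi-inverse of ${i\crism}_\ast$ on the crystal categories, then for a log.\ $m$-crystal $E'$ on $X/S$ the counit isomorphism $E'\cong{i\crism}_\ast GE'$ yields ${i\crism}^\ast E'\cong{i\crism}^\ast{i\crism}_\ast GE'\cong GE'$, so ${i\crism}^\ast$ is the quasi-inverse.

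I expect this last point to be the only non-formal part: unlike ${i\crism}_\ast$, the functor ${i\crism}^\ast$ is not given by a naive term-by-term formula on thickenings, so the fact that it preserves the crystal property (equivalently, that the above counit is an isomorphism) needs a genuine local argument on $X_{\et}=X_{0,\et}$. The idea is to trivialise a given log.\ $m$-crystal on $X/S$ by means of its stratification with respect to the smooth lift $X$ --- making the relevant $m$-PD envelopes explicit via Lemma~\ref{lem:rest-D} --- and then to compare the log.\ $m$-crystal on $X_0/S$ built from that stratification datum with ${i\crism}^\ast$ of the original crystal, reducing to the corresponding statement in the classical setting \cite[6.6]{Berthelot-Ogus:NCC}, \cite{Berthelot:LAM} adapted to level $m$ and to the logarithmic context.
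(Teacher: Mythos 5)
Your proposal is correct and is essentially the paper's own argument: the paper simply invokes the argument of \cite[6.7]{Berthelot-Ogus:NCC}, which is exactly what you reconstruct — applying the stratification equivalence (Proposition \ref{thm:crystal}) both to $X_0\hookrightarrow X$ and to $X\hookrightarrow X$ via $D^{(m)}_{X_0}(X^{(n+1)}_{/S})=P^{(m)}_{X/S}(n)$, identifying ${i\crism}_\ast$ with the identity on stratified modules, and recognising ${i\crism}^\ast$ as the quasi-inverse. No further comment is needed.
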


\begin{proof}
The argument of \cite[6.7]{Berthelot-Ogus:NCC} works.
\end{proof}

\section{Linearization.}
\label{ss:linearization}

Here, we discuss the linearization.  
Let $(S,\fa,\fb,\gamma)$ be a fine log.\ $m$-PD scheme, and $X$ a fine log.\ scheme over $S$ such that
the $m$-PD structure $(\fb,\gamma)$ extends to $\sO_X$.

First, $j_X$ signifies the localization morphism
\[
j_X\colon (X/S)\crism|_X\to (X/S)\crism,
\]
where the source denotes the localized category of $(X/S)\crism$ over the log.\ $m$-PD thickening $(X,X,0)$
with the trivial PD structure on $0$.
Then, composing with $u_{X/S}^{(m)}$, we get the morphism of topoi
\[
u_{X/S}^{(m)}|_X\colon (X/S)\crism|_X\to X_{\et}.
\]

Now, we define the linearization functor as
\[
L^{(m)}={j_X}_*\circ {u_{X/S}^{(m)}|_X}^*\colon X_{\et}\to (X/S)\crism,
\]
and its restricted version as
\begin{equation*}
    \bar{L}^{(m)}={Q_{X/S}^{(m)}}^*\circ L^{(m)}\colon X_{\et}\to (X/S)\Rcrism.
\end{equation*}
The $\sO_X$-module $\bar{L}^{(m)}(\sF)_X$ is also denoted by $\bar{L}^{(m)}_X(\sF)$.

\begin{proposition}
\label{thm:linearization}
Assume that $X$ is log.\ smooth and that the underlying scheme $\underline{X}$ is flat over $\underline{S}$.
Let $\sF$ denote an $\sO_X$-module.

{\rm (i)} $\bar{L}^{(m)}(\sF)$ is a restricted log.\ $m$-crystal, and $\bar{L}^{(m)}_X(\sF)=\P_{X/S}^{(m)}\otimes \sF$.

{\rm (ii)} We have $R{u_{X/S}^{(m)}}_\ast L^{(m)}(\sF)=R\left.\bar{u}^{(m)}_{X/S}\right._{\!\ast}\bar{L}^{(m)}(\sF)=\sF$.

{\rm (iii)} If $E$ is a restricted log.\ $m$-crystal, there exists a canonical isomorphism
\[
    E\otimes_{\sO_{X/S}^{(m)}} \bar{L}^{(m)}(\sF)\to \bar{L}^{(m)}(E_X\otimes\sF).
\]
\end{proposition}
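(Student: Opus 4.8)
The plan is to take the three assertions in order, with (i) carrying essentially all of the geometric input and (ii), (iii) being formal consequences of it together with Propositions~\ref{thm:restricted} and~\ref{thm:crystal}.

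For (i), I would first unwind $L^{(m)}={j_X}_*\circ{u_{X/S}^{(m)}|_X}^*$ to get an explicit description of the values of $L^{(m)}(\sF)$: arguing exactly as for the classical linearization \cite[IV~3.1]{Berthelot:CCSCP}, for a log.\ $m$-PD thickening $(U,T,J,\delta)$ the sheaf $L^{(m)}(\sF)_{(U,T,J,\delta)}$ is the pushforward along $\underline{D_U^{(m)}(T\times_S X)}\to\underline T$ of $\sO_{D_U^{(m)}(T\times_S X)}\otimes_{\sO_X}\sF$, the tensor product being formed through the projection to $X$. The crystal property then reduces to the base-change formula for log.\ $m$-PD envelopes: for a morphism $(U,T,J,\delta)\to(U',T',J',\delta')$ one has $\underline{D_U^{(m)}(T\times_S X)}\cong\underline T\times_{\underline{T'}}\underline{D_{U'}^{(m)}(T'\times_S X)}$, which holds precisely because $\underline X$ is flat over $\underline S$ (this is the logarithmic, level-$m$ version of flat base change for divided-power envelopes, controlled in charts by Lemma~\ref{lem:rest-D} and \cite[1.5]{Berthelot:DMAI}). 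Hence $L^{(m)}(\sF)$ is a log.\ $m$-crystal and $\bar L^{(m)}(\sF)={Q_{X/S}^{(m)}}^*L^{(m)}(\sF)$ is a restricted one. Evaluating at the trivial thickening $(X,X,0)$ and using $D_X^{(m)}(X\times_S X)=P_{X/S}^{(m)}$ gives $\bar L^{(m)}_X(\sF)=\P_{X/S}^{(m)}\otimes\sF$.

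For (ii), I would first invoke Proposition~\ref{thm:restricted} for the complex equal to $L^{(m)}(\sF)$ in degree $0$: this already identifies $R{u_{X/S}^{(m)}}_*L^{(m)}(\sF)$ with $R\bar u_{X/S}^{(m)}{}_*\bar L^{(m)}(\sF)$, so only the first must be computed. Since $X$ is log.\ smooth over $S$, the trivial thickening $(X,X,0)$ is itself fundamental and $D_X^{(m)}(X^{(n+1)}_{/S})=P_X(n)$; thus the \v{C}ech--Alexander complex of Proposition~\ref{thm:restricted} may be formed globally from these thickenings, and inserting the description of $L^{(m)}(\sF)$ from (i) together with $D_X^{(m)}(P_X(n)\times_S X)\cong P_X(n+1)$ turns it into the complex $\sO_{P_X(1)}\otimes\sF\to\sO_{P_X(2)}\otimes\sF\to\cdots$, augmented by $\sF\to\sO_{P_X(1)}\otimes\sF$. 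One is therefore reduced to the classical fact that the cosimplicial $\sO_X$-algebra $n\mapsto\sO_{P_X(n)}$ has an extra degeneracy, so the associated augmented complex $\sO_X\to\sO_{P_X(1)}\to\sO_{P_X(2)}\to\cdots$ is contractible as a complex of $\sO_X$-modules; since each $\sO_{P_X(n)}$ is locally free over $\sO_X$ (Lemma~\ref{lem:rest-D}), tensoring with an arbitrary $\sF$ preserves exactness, and (ii) follows as in \cite[IV~3.2]{Berthelot:CCSCP}.

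For (iii), by (i) both $E\otimes_{\sO_{X/S}^{(m)}}\bar L^{(m)}(\sF)$ and $\bar L^{(m)}(E_X\otimes\sF)$ are restricted log.\ $m$-crystals (tensor products over $\sO_{X/S}^{(m)}$ of restricted crystals are again such), so by Proposition~\ref{thm:crystal} it suffices to give a morphism of the corresponding $\sO_X$-modules with log.\ hyper $m$-PD stratification and to check it there. On $X$-values the two sides are $E_X\otimes_{\sO_X}(\P_{X/S}^{(m)}\otimes\sF)$ and $\P_{X/S}^{(m)}\otimes_{\sO_X}(E_X\otimes\sF)$; the stratification $\varepsilon$ of $E$ gives the $\sO_X$-linear isomorphism between them (transporting $E_X$ across $\P_{X/S}^{(m)}$), and its cocycle condition is exactly the compatibility with the stratifications on the two crystals, so it extends uniquely to the desired isomorphism in $(X/S)\Rcrism$. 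The only genuinely non-formal points are, as indicated, the explicit formula and the flat base change for log.\ $m$-PD envelopes in (i) and the splitting of the \v{C}ech--Alexander resolution in (ii); both are the expected logarithmic level-$m$ analogues of standard results, so I expect the main obstacle to be purely bookkeeping --- tracking the two $\sO_X$-structures on $\P_{X/S}^{(m)}$ and the $m$-PD combinatorics through the charts of Lemma~\ref{lem:rest-D}.
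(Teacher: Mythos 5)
The step that fails is the base-change claim in your proof of (i). You assert that for an arbitrary morphism $(U,T,J,\delta)\to(U',T',J',\delta')$ of log.\ $m$-PD thickenings one has $\underline{D^{(m)}_U(T\times_SX)}\cong \underline{T}\times_{\underline{T'}}\underline{D^{(m)}_{U'}(T'\times_SX)}$ ``because $\underline{X}$ is flat over $\underline{S}$'', and you deduce that $L^{(m)}(\sF)$ is a crystal on the whole site $\Crism(X/S)$. The flatness that would be relevant for such a base change is flatness of $\underline{T}\to\underline{T'}$, which is not available; flatness of $\underline{X}$ over $\underline{S}$ plays no role at this point (in the paper it enters only through Lemma \ref{lem:rest-D}, i.e.\ through the structure of \emph{fundamental} thickenings). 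More importantly, at level $m>0$ the statement on the full site is precisely what breaks down: for an arbitrary $m$-PD thickening $T$, the ideal of $U$ in $T\times_XP^{(m)}_{X/S}$ need not carry an $m$-PD structure compatible with $\gamma$, since, unlike the case $m=0$, a sum of two $m$-PD ideals is in general not an $m$-PD ideal. This failure is the very reason the restricted site $\RCrism(X/S)$ exists, and it is why the proposition claims only that $\bar{L}^{(m)}(\sF)$ is a \emph{restricted} crystal. The paper's proof accordingly works only with fundamental thickenings admitting a retraction $T\to X$: there Lemma \ref{lem:rest-D} shows that $\sO_T$ and $\sP^{(m)}_{X/S}$, hence $\sO_T\otimes\sP^{(m)}_{X/S}$, are $m$-PD polynomial algebras, so the ideal of $U\hookrightarrow T\times_XP^{(m)}_{X/S}$ does admit a compatible $m$-PD structure, giving $D^{(m)}_U(T\times_SX)\cong T\times_XP^{(m)}_{X/S}$, and the crystal property is checked only for morphisms between such thickenings, following \cite[IV 3.1.6]{Berthelot:CCSCP}. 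Your argument proves a stronger statement by an unjustified step and never uses the fundamental-thickening structure, which is the actual content of (i).

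Parts (ii) and (iii) are essentially sound once (i) is repaired. Your \v{C}ech--Alexander/extra-degeneracy computation of $R{u^{(m)}_{X/S}}_*L^{(m)}(\sF)$ is an unpacking of the argument of Le Stum--Quir\'os \cite{LeStum-Quiros:EPLCCHL} that the paper simply quotes before invoking Proposition \ref{thm:restricted} (you should still justify $D^{(m)}_X(P^{(m)}_{X/S}(n)\times_SX)\cong P^{(m)}_{X/S}(n+1)$, which again lives on the restricted side), and your (iii), via stratifications and Proposition \ref{thm:crystal}, is equivalent to the paper's direct verification on fundamental thickenings using the transition isomorphisms $E_T\otimes\sO_{D^{(m)}_U(T\times X)}\cong\sO_{D^{(m)}_U(T\times X)}\otimes E_X$ of the restricted crystal $E$.
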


\begin{proof}
By the construction of $L^{(m)}$, for each $(U,T,J,\delta)\in\Crism(X/S)$ we have
\[
L^{(m)}(\sF)_{(U,T,J,\delta)}={p_T}_*{p_X}^*(\sF),
\]
where $p_T$ \resp{$p_X$} denotes the projection from $D_U^{(m)}(T\times_S X)$ to $T$ \resp{to $X$}.
In particular, the latter half of (i) holds.

As for the former half, by following the classical argument \cite[IV 3.1.6]{Berthelot:CCSCP},
it is sufficient to show that the natural morphism
\begin{equation}
\label{eqn:restrictedbasechange}
D^{(m)}_U(T\times_SX)\to T\times_X P_{X/S}^{(m)}
\end{equation}
is an isomorphism for all fundamental thickenings $(U,T,J,\delta)\in\RCrism(X/S)$ such that
a retraction $T\to X$ exists.
Now, because $\sO_T$ and $\sP_{X/S}^{(m)}$ are isomorphic to some $m$-PD polynomial algebras,
so is $\sO_T\otimes\sP_{X/S}^{(m)}$.
Therefore the ideal of the closed immersion $U\hookrightarrow T\times_X P_{X/S}^{(m)}$ has an $m$-PD structure
compatible with $\gamma$.
Hence a morphism $T\times_X P_{X/S}^{(m)}\to D^{(m)}_U(T\times_SX)$ is obtained, and
it is a standard argument to show that this is an inverse morphism of (\ref{eqn:restrictedbasechange}).

In order to prove (ii), note that
\[
    R{u_{X/S}^{(m)}}_\ast L^{(m)}(\sF)=\sF
\]
by the arguments in \cite[4]{LeStum-Quiros:EPLCCHL}. Moreover, we have
\[
    R\left.{\bar{u}_{X/S}^{(m)}}\right._{\!\ast}\bar{L}^{(m)}(\sF) = R\left.{\bar{u}_{X/S}^{(m)}}\right._{\!\ast}{Q_{X/S}^{(m)}}^*L^{(m)}(\sF)
    = R{u_{X/S}^{(m)}}_*(\sF)
\]
in virtue of Proposition \ref{thm:restricted}.

For (iii), let $(U,T,J,\delta)$ be a fundamental log.\ $m$-PD thickening in $\RCrism(X/S)$.
Then because $E$ is a restricted $m$-crystal, we have the isomorphism
\[
    E_T\otimes\sO_{D^{(m)}_U(T\times X)}=E_{D^{(m)}(T\times U)}=\sO_{D^{(m)}_U(T\times X)}\otimes E_X.
\]
Tensoring $\sF$ to the left and the right sides, we see the required isomorphism.
\end{proof}

Here, we set some notations.
The $m$-PD envelope of the diagonal immersion $X\hookrightarrow X^{(r+1)}_{/S}$ is denoted by $P^{(m)}_{X/S}(r)$,
and its structure sheaf is denoted by $\sP^{(m)}_{X/S}(r)$.
We have therefore $P^{(m)}_{X/S}=P^{(m)}_{X/S}(1)$ and $\sP_{X/S}^{(m)}=\sP_{X/S}^{(m)}(1)$.
Again, we regard $X^{(r+1)}_{/S}$ and $P_{X/S}^{(m)}(r)$ as log.\ schemes over $X$ by the first projection,
which lets $\sO_X$ act on $\sO_{X^{(r+1)}_{/S}}=\sO_X\otimes_{\sO_S}\dots\otimes_{\sO_S}\sO_X$ by multiplication to the first factor
and induce on $\sP_{X/S}^{(m)}(r)$ an $\sO_X$-algebra structure.

For each natural number $r$ and $i\in\{0,\dots,r\}$, let $j_r^i\colon X^{(r+1)}_{/S}\to X$ denote
the $(i+1)$-st projection.
Let $d_r^i\colon P^{(m)}_{X/S}(r+1)\to P^{(m)}_{X/S}(r)$ ($0\leq i\leq r+1$) denote the morphism
corresponding to
\[
(j_{r+1}^0,\dots, j_{r+1}^{i-1}, j_{r+1}^{i+1},\dots, j_{r+1}^{r+1})_S\colon X^{(r+2)}_{/S}\to X^{(r+1)}_{/S},
\]
and $s_r^i\colon P^{(m)}_{X/S}(r-1)\to P^{(m)}_{X/S}(r)$ ($0\leq i\leq r$) the one corresponding to
\[
(j_{r-1}^0,\dots, j_{r-1}^i, j_{r-1}^i,\dots, j_{r-1}^r)_S\colon X^{(r)}_{/S}\to X^{(r+1)}_{/S}.
\]

Then, these data make $P_{X/S}^{(m)}(\bullet)$ a simplicial log.\ scheme over $S$, and consequently
$\sP_{X/S}^{(m)}(\bullet)$ is a DGA (differential graded algebra) over $f^{-1}(\sO_S)$;
the differential morphism $d^r\colon\sP_{X/S}^{(m)}(r)\to \sP_{X/S}^{(m)}(r+1)$ is by definition
\[
d^r=\sum_{i=0}^{r+1} (-1)^i{d_r^i}^*.
\]

Now, we proceed to a calculation of the hyper $m$-PD stratification on $\sP_{X/S}^{(m)}\otimes_{\sO_X}\sF=L_X^{(m)}(\sF)$,
which exists because of Proposition \ref{thm:linearization} (i) and Proposition \ref{thm:crystal}. 

\begin{lemma}
    \label{lem:inducingstrat}
    Assume that $X$ is log.\ smooth over $S$ and that the underlying scheme $\underline{X}$ is
    flat over $\underline{S}$.
    Then, the log.\ $m$-PD stratification on $\bar{L}_X^{(m)}(\sF)=\P_{X/S}^{(m)}\otimes_{\sO_X}\sF$ is induced by 
    \[
        \sO_{X^{(2)}_{/S}} (\sO_{X^{(2)}_{/S}} \otimes \sF)\to (\sO_{X^{(2)}_{/S}}\otimes\sF)\otimes\sO_{X^{(2)}_{/S}};\hspace{1em}
    (1\otimes 1)\otimes(f\otimes g)\otimes x\mapsto (1\otimes g)\otimes x\otimes(1\otimes f),
    \]
    where the tensor products are taken over $\sO_X$.
\end{lemma}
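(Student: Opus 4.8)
The plan is to compute the stratification directly from its construction, which goes back to the equivalence of Proposition~\ref{thm:crystal} (see \cite[6.6]{Berthelot-Ogus:NCC}): for a restricted log.\ $m$-crystal $E$, the two projections $j_1^0,j_1^1\colon X^{(2)}_{/S}\to X$ — which, because $D_X^{(m)}(X)=X$ and $D_X^{(m)}(X^{(2)}_{/S})=P_{X/S}^{(m)}$, are morphisms of fundamental log.\ $m$-PD thickenings $(X,P_{X/S}^{(m)})\to(X,X)$ in $\RCrism(X/S)$ — yield via the crystal condition two isomorphisms $\sP_{X/S}^{(m)}\otimes_{\sO_X,\,j_1^i}E_X\xrightarrow{\sim}E_{P_{X/S}^{(m)}}$, and the stratification $\varepsilon$ is obtained by composing the one for $j_1^0$ with the inverse of the one for $j_1^1$. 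So I would first identify each of these three objects and each of the two isomorphisms explicitly in the case $E=\bar L^{(m)}(\sF)$.

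By Proposition~\ref{thm:linearization}(i) we already know $E_X=\sP_{X/S}^{(m)}\otimes_{\sO_X}\sF$. To describe $E_{P_{X/S}^{(m)}}$ I would invoke the formula $L^{(m)}(\sF)_{(U,T,J,\delta)}={p_T}_*{p_X}^*(\sF)$ established in the proof of Proposition~\ref{thm:linearization}, taken at $(U,T)=(X,P_{X/S}^{(m)})$; combined with the restricted base-change isomorphism~(\ref{eqn:restrictedbasechange}) — applicable since $j_1^0$ and $j_1^1$ are retractions of the diagonal — this gives $E_{P_{X/S}^{(m)}}\cong\sO_{D_X^{(m)}(P_{X/S}^{(m)}\times_SX)}\otimes_{\sO_X}\sF$, and $D_X^{(m)}(P_{X/S}^{(m)}\times_SX)$ is identified in the usual way with $P_{X/S}^{(m)}(2)=D_X^{(m)}(X^{(3)}_{/S})$, so that $E_{P_{X/S}^{(m)}}\cong\sP_{X/S}^{(m)}(2)\otimes_{\sO_X}\sF$. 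Under this identification the two crystal isomorphisms become the two maps $\sP_{X/S}^{(m)}\otimes(\sP_{X/S}^{(m)}\otimes\sF)\to\sP_{X/S}^{(m)}(2)\otimes\sF$ induced by the relevant face, resp.\ degeneracy, of the cosimplicial scheme $P_{X/S}^{(m)}(\bullet)$ — concretely, ``inserting a $1$'' in the appropriate slot of $\sO_X^{\otimes3}$. Composing the first with the inverse of the second and tracking the element $(1\otimes1)\otimes(f\otimes g)\otimes x$ then yields the assignment in the statement, and $\sP_{X/S}^{(m)}$-linearity pins the map down everywhere. As all of this is local on $X$, I would carry out the identifications \'etale-locally, where Lemma~\ref{lem:rest-D} replaces every $m$-PD envelope by an $m$-PD polynomial algebra and the maps become completely transparent.

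The main obstacle is exactly this last tracking. The scheme $P_{X/S}^{(m)}(2)$ carries three $\sO_X$-module structures, one for each projection $X^{(3)}_{/S}\to X$, and the tensor products in play — $\sP_{X/S}^{(m)}\otimes_{\sO_X}(\sP_{X/S}^{(m)}\otimes_{\sO_X}\sF)$ on the source side, $(\sP_{X/S}^{(m)}\otimes_{\sO_X}\sF)\otimes_{\sO_X}\sP_{X/S}^{(m)}$ on the target side, and the auxiliary $\sP_{X/S}^{(m)}\otimes_{\sO_X}\sP_{X/S}^{(m)}\cong\sP_{X/S}^{(m)}(2)$ — are formed using different ones; keeping straight which structure is used where, and in which direction each base-change and insertion isomorphism runs, is precisely where a stray transposition or a wrong face could slip in. To guard against this I would check the resulting formula against the two defining properties of a stratification, namely that it reduces to $\mathrm{id}_M$ after applying the augmentation $\sP_{X/S}^{(m)}\to\sO_X$ and that it satisfies the cocycle condition; for the transposition-type map displayed in the statement both are immediate, and they serve as a reassuring consistency check on the computation.
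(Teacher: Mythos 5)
The paper gives no proof of Lemma \ref{lem:inducingstrat} at all --- it is stated as the routine adaptation of the classical description of the stratification on a linearization (\cite[6.9--6.10]{Berthelot-Ogus:NCC}, or \cite[IV 3.1.3]{Berthelot:CCSCP}). Your proposal reconstructs exactly that intended argument: recover $\varepsilon$ from the crystal structure supplied by Propositions \ref{thm:crystal} and \ref{thm:linearization}, identify $E_{P_{X/S}^{(m)}}$ with $\sP_{X/S}^{(m)}(2)\otimes\sF$ via the base-change isomorphism (\ref{eqn:restrictedbasechange}), and read off the transposition formula from the cosimplicial structure of $\sP_{X/S}^{(m)}(\bullet)$, with the augmentation and cocycle conditions as a check. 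This is correct and coincides with what the author tacitly relies on, so no gap to report.
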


\begin{proposition}
    Let $M$ and $N$ be two $\sO_X$-modules and $u$ a log.\ hyper $m$-PD differential operator from $M$ to $N$.
    Then, the morphism
    \[
    \begin{tikzpicture}
        \matrix (m) [matrix of math nodes,
                     row sep=3em, column sep=3.5em, text height=1.5ex, text depth=0.25ex]
                     { \sP_{X/S}^{(m)}\otimes M & \sP_{X/S}^{(m)}\otimes\sP_{X/S}^{(m)}\otimes M &
                     \sP_{X/S}^{(m)}\otimes N \\
                    };
        \path[->,font=\scriptsize]
        (m-1-1) edge node[auto] {${d_1^1}^*\otimes \id$} (m-1-2)
        (m-1-2) edge node[auto] {$\id\otimes u$} (m-1-3);
    \end{tikzpicture}
    \]
    is compatible with the log.\ hyper $m$-PD stratifications on both sides viewed as $\P_{X/S}^{(m)}\otimes M=L^{(m)}_X(M)$ and as
    $\P_{X/S}^{(m)}\otimes N=L^{(m)}_X(N)$.
\end{proposition}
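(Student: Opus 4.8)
The plan is to unwind what ``compatible with the stratifications'' means, and then to verify the resulting identity of $\sO_X$-linear maps on local sections with the help of the explicit formula of Lemma~\ref{lem:inducingstrat}. Write $L(u)=(\id\otimes u)\circ({d_1^1}^*\otimes\id)\colon\sP_{X/S}^{(m)}\otimes M\to\sP_{X/S}^{(m)}\otimes N$ for the composite in question (all tensor products over $\sO_X$). By Proposition~\ref{thm:linearization}~(i) the $\sO_X$-modules $\sP_{X/S}^{(m)}\otimes M$ and $\sP_{X/S}^{(m)}\otimes N$ carry the log.\ hyper $m$-PD stratifications $\varepsilon_M$, $\varepsilon_N$ coming from $\bar L^{(m)}(M)$ and $\bar L^{(m)}(N)$, and by Lemma~\ref{lem:inducingstrat} these are given by the explicit ``swap'' formula recorded there. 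Thus the assertion of the proposition is, by definition, that $L(u)$ is \emph{horizontal}, i.e.\ that the square
\[
\begin{tikzpicture}
    \matrix (m) [matrix of math nodes, row sep=3em, column sep=3.5em, text height=1.5ex, text depth=0.25ex]
        { \sP_{X/S}^{(m)}\otimes\sP_{X/S}^{(m)}\otimes M & \sP_{X/S}^{(m)}\otimes M\otimes\sP_{X/S}^{(m)} \\
          \sP_{X/S}^{(m)}\otimes\sP_{X/S}^{(m)}\otimes N & \sP_{X/S}^{(m)}\otimes N\otimes\sP_{X/S}^{(m)} \\
        };
    \path[->,font=\scriptsize]
        (m-1-1) edge node[auto] {$\varepsilon_M$} (m-1-2)
                edge node[left] {$\id\otimes L(u)$} (m-2-1)
        (m-2-1) edge node[auto] {$\varepsilon_N$} (m-2-2)
        (m-1-2) edge node[auto] {$L(u)\otimes\id$} (m-2-2);
\end{tikzpicture}
\]
commutes, the left-most factor being the stratification variable; by Proposition~\ref{thm:crystal} this is also equivalent to $L(u)$ underlying a morphism of log.\ $m$-crystals $\bar L^{(m)}(M)\to\bar L^{(m)}(N)$.

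Since all four arrows are $\sO_X$-linear and the question is \'etale-local on $X$, I would fix \'etale-local coordinates as in Lemma~\ref{lem:rest-D} and check commutativity on sections of the form $\theta\otimes\tau^{\{K\}}\otimes m$, where $\tau^{\{K\}}$ ranges over the canonical $\sO_X$-basis of $\sP_{X/S}^{(m)}$ afforded by the level-$m$ divided powers of local coordinates $\tau_1,\dots,\tau_n$, and $\theta$ similarly over the basis of the stratification variable. Next I would make both composites explicit. Under the identification $\sP_{X/S}^{(m)}(2)\cong\sP_{X/S}^{(m)}\otimes_{\sO_X}\sP_{X/S}^{(m)}$ used in the statement, ${d_1^1}^*$ is the ``co-addition'' map on the divided powers of the $\tau_i$ (it corresponds to deleting the middle vertex of $X^{(3)}_{/S}$), so that $L(u)(\tau^{\{K\}}\otimes m)=\sum_{K'+K''=K}c_{K',K''}\,\tau^{\{K'\}}\otimes u_{K''}(m)$, where $u_{K}:=u(\tau^{\{K\}}\otimes-)\colon M\to N$ and the $c_{K',K''}$ are the explicit coefficients of that co-addition, built from the binomials $\binom{K}{K'}$, $\mbinom{K}{K'}$, $\qbinom{K}{K'}$ of the Conventions. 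Substituting this, together with the formula of Lemma~\ref{lem:inducingstrat} for $\varepsilon_M$ and $\varepsilon_N$, into the two ways around the square reduces the commutativity to a combinatorial identity between products of these coefficients and the coefficients occurring in $\varepsilon$.

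That identity is nothing but the shadow on structure sheaves of a cosimplicial identity among the face maps of $\sP_{X/S}^{(m)}(\bullet)$ --- concretely the compatibility of ${d_1^1}^*$ with the coaction, which itself reflects the coassociativity of the ``formal groupoid'' $P_{X/S}^{(m)}(\bullet)$ together with the fact that $\sP_{X/S}^{(m)}\otimes M$ carries the source-pullback (``cofree'') coaction. From this vantage point the proposition simply records that $u\mapsto L(u)$ is the canonical map identifying log.\ hyper $m$-PD differential operators with morphisms of the associated cofree modules-with-stratification, and the content is the explicitness of that identification. I expect the only genuine obstacle to be bookkeeping: among the three copies of $\sP_{X/S}^{(m)}$ now in play one must keep straight which is the stratification variable, which comes from ${d_1^1}^*$, and which from the linearization, and one must check that the twist of tensor factors prescribed by Lemma~\ref{lem:inducingstrat} is transported correctly through $\id\otimes u$. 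Once the dictionary between tensor factors and the vertices of $X^{(3)}_{/S}$ and $X^{(4)}_{/S}$ is pinned down, the verification is routine and uses no information about the $m$-PD structure beyond the formulas already at hand.
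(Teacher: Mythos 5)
Your plan is essentially the right one, and it is worth noting that the paper itself states this proposition without any proof (it is the log.\ analogue of the classical compatibility of linearization with differential operators, cf.\ \cite{Berthelot-Ogus:NCC}, \cite{Berthelot:CCSCP}), so there is no in-paper argument to compare against; your outline supplies exactly the argument the author presumably has in mind. You correctly translate ``compatible with the stratifications'' into the horizontality square for $L(u)=(\id\otimes u)\circ({d_1^1}^*\otimes\id)$, you correctly identify the stratification on $\bar L^{(m)}_X(\sF)=\P_{X/S}^{(m)}\otimes\sF$ as the cofree/source-pullback coaction made explicit (with its twist of factors) in Lemma \ref{lem:inducingstrat}, and the reduction of the commutativity to cosimplicial identities among the face maps of $\sP_{X/S}^{(m)}(\bullet)$ (coassociativity of ${d_1^1}^*$, i.e.\ the identification of the two composites $P^{(m)}_{X/S}(2)\to P^{(m)}_{X/S}(1)$ obtained by deleting interior vertices of $X^{(4)}_{/S}$) is the standard and correct route; the hypotheses of Proposition \ref{thm:linearization} and Proposition \ref{thm:crystal} (log.\ smoothness and flatness) are part of the running assumptions of that section, so invoking them is fine.

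One detail in your coordinate computation should be corrected, although it does not affect the structural argument. In the logarithmic setting ${d_1^1}^*$ is \emph{not} the co-addition on divided powers: since the sections $u_i=1+\eta_i$ are group-like, one has ${d_1^1}^*(\eta_i)=\eta_i\otimes1+\eta_i\otimes\eta_i+1\otimes\eta_i$, and consequently (see the proof of Proposition \ref{thm:logPm}) the expansion of ${d_1^1}^*\big(\eta^{\{K\}}\big)$ is indexed by triples $A+B+C=K$, producing terms $\eta^{\{A+B\}}\otimes\eta^{\{A+C\}}$ whose exponents sum to $K+A\geq K$. So your formula $L(u)\big(\tau^{\{K\}}\otimes m\big)=\sum_{K'+K''=K}c_{K',K''}\,\tau^{\{K'\}}\otimes u_{K''}(m)$ is the non-logarithmic one and is wrong as written here; if you carry out the explicit verification you must use the three-index expansion (and the coefficients $\Gamma_{A,B,C}$, or rather their two-factor analogues) instead. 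Since your final argument rests on the cosimplicial identity rather than on the precise shape of these coefficients, the proof strategy survives this correction intact.
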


\section{Logarithmic differential calculus.}
\label{ss:a-differential}

Let $(S,\fa,\fb,\gamma)$ be a fine log.\ $m$-PD scheme and $X$ be a fine log.\ scheme over $S$
such that the $m$-PD structure $(\fb,\gamma)$ extends to $\sO_X$.
Then, the fine log.\ scheme $P_{X/S}^{(m)}(r)$ is defined just as in the previous section.
Notations $\sP_{X/S}^{(m)}(r)$, $d^i_r$, $s^i_r$, $d^r$ and $N\sP_{X/S}^{(m),\bullet}$ are also defined as in that section.

Now, assume that $X$ is log.\ smooth over $S$ and that we are given a family of sections $t=(t_1,\dots,t_n)$ of $M_X$ such that
$(\dlog t_i)_{1\leq i\leq r}$ forms a basis of log.\ differential sheaf $\Omega_{X/S}^1$.
We call such a family ``global coordinates of $X$ over $S$''.

In this situation, there exists a unique family $(u_i)_{1\leq i\leq r}$ of sections in $\P_{X/S}^{(m)}$ such that
$j_1^*(t_i)=j_0^*(t_i)u_i$.
We set $\eta_i:=u_i-1$ and $\eta:=(\eta_1,\dots,\eta_n)$.
For $I=(i_1,\dots,i_n)\in\bN^n$, the following notation is used:
\[
\eta^{\{I\}}:=\prod_{j=1}^n \eta_j^{\{i_j\}}.
\]

\begin{proposition}
\label{thm:logPm}
The differential map $d^1\colon\P^{(m)}_{X/S}\to\P^{(m)}_{X/S}(2)$ is described by
\[
d^1\left(\eta^{\{I\}}\right)=-\sum_{\substack{A+B+C=I\\ B,C\neq I}}\Gamma_{A,B,C}\,\eta^{\{A+B\}}\otimes\eta^{\{A+C\}},
\]
where
\[
\Gamma_{A,B,C} := \qbinom{A+B+C}{A}\qbinom{B+C}{B}\mbinom{A+B}A\mbinom{A+C}A.
\]
\end{proposition}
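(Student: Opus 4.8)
The plan is to compute $d^1=\sum_{i=0}^{2}(-1)^i{d_1^i}^*$ term by term on $\eta^{\{I\}}$ and reassemble. Write $\P(2)=\P_{X/S}^{(m)}(2)$. The three projections $X^{(3)}_{/S}\to X$ furnish three families of ``infinitesimal'' sections of $\P(2)$: let $y_1,\dots,y_n$ (resp.\ $z_1,\dots,z_n$) be the family attached to the first two (resp.\ the last two) factors, defined just as $\eta_1,\dots,\eta_n$ was for $\P_{X/S}^{(m)}$; the section attached to the first and third factors is then $(1+y_i)(1+z_i)-1$. By the argument of Lemma~\ref{lem:rest-D}, $\P(2)$ is, \'etale locally on $X$, the $m$-PD polynomial algebra $\sO_X\{y_1,\dots,y_n,z_1,\dots,z_n\}_{(m)}$, compatibly with the $m$-PD structures, and ${d_1^0}^*$, ${d_1^2}^*$, ${d_1^1}^*$ send $\eta_i$ to $z_i$, $y_i$, $(1+y_i)(1+z_i)-1$ respectively. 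Since each ${d_1^i}$ is a morphism of $m$-PD algebras it commutes with the symbols $(\cdot)^{\{k\}}$, so ${d_1^0}^*(\eta^{\{I\}})=z^{\{I\}}$ and ${d_1^2}^*(\eta^{\{I\}})=y^{\{I\}}$ (in the evident multi-index notation $y^{\{I\}}=\prod_i y_i^{\{i_i\}}$), while
\[
{d_1^1}^*\bigl(\eta^{\{I\}}\bigr)=\prod_{i=1}^{n}\bigl((1+y_i)(1+z_i)-1\bigr)^{\{i_i\}}.
\]
As ${d_1^1}^*$ is multiplicative, the whole problem reduces to the one-variable expansion of $\bigl((1+y)(1+z)-1\bigr)^{\{k\}}$ in $\sO_X\{y,z\}_{(m)}$ in the basis $\{y^{\{a\}}z^{\{b\}}\}$; the general $n$ is recovered by taking products over $i$, using the multiplicativity of $\binom{\cdot}{\cdot}$, $\mbinom{\cdot}{\cdot}$ and $\qbinom{\cdot}{\cdot}$ over coordinates.

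Carrying out this one-variable expansion is the technical core, and the combinatorial identity it produces is the main obstacle. I would use the standard formulas in an $m$-PD algebra (Berthelot~\cite{Berthelot:DMAI}): the product rule $x^{\{a\}}x^{\{b\}}=\mbinom{a+b}{a}x^{\{a+b\}}$, the rule $(x+x')^{\{k\}}=\sum_{a+a'=k}\qbinom{k}{a}x^{\{a\}}{x'}^{\{a'\}}$ for $x,x'$ in the $m$-PD ideal, and the homogeneity $(\lambda x)^{\{k\}}=\lambda^{k}x^{\{k\}}$ for $\lambda$ in the ambient ring. Writing $(1+y)(1+z)-1=y(1+z)+z$ and applying the second rule gives $\sum_{s+t=k}\qbinom{k}{s}(1+z)^{s}\,y^{\{s\}}z^{\{t\}}$; expanding $(1+z)^{s}$ by the ordinary binomial theorem, rewriting each $z^{c}$ in terms of $z^{\{c\}}$, and recombining with $z^{\{t\}}$ via the product rule, one obtains an expansion $\sum_{a+b+c=k}\Gamma'_{a,b,c}\,y^{\{a+b\}}z^{\{a+c\}}$ with $\Gamma'_{a,b,c}$ an explicit product of the three kinds of symbols. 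It remains to verify the purely numerical identity $\Gamma'_{a,b,c}=\Gamma_{a,b,c}$; I expect to do this by clearing denominators to an identity among integers and manipulating ordinary factorials together with their ``integral-part'' analogues $\ell\mapsto\lfloor\ell/p^{m}\rfloor!$. This is exactly where level $m>0$ is genuinely harder than the classical case $m=0$, in which every $\qbinom{\cdot}{\cdot}$ equals $1$: here $\mbinom{\cdot}{\cdot}$ and $\qbinom{\cdot}{\cdot}$ do not reduce to ordinary binomial coefficients, so the verification, though elementary, requires careful bookkeeping.

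Finally I reassemble. Granting the one-variable identity, taking products gives ${d_1^1}^*(\eta^{\{I\}})=\sum_{A+B+C=I}\Gamma_{A,B,C}\,y^{\{A+B\}}z^{\{A+C\}}$. The summands indexed by $B=I$ and by $C=I$ have coefficient $\Gamma_{0,I,0}=\Gamma_{0,0,I}=1$ and equal $y^{\{I\}}$ and $z^{\{I\}}$, so in the alternating sum they cancel against ${d_1^2}^*(\eta^{\{I\}})$ and ${d_1^0}^*(\eta^{\{I\}})$; hence $d^1(\eta^{\{I\}})=-\sum_{A+B+C=I,\ B,C\neq I}\Gamma_{A,B,C}\,y^{\{A+B\}}z^{\{A+C\}}$, which is the assertion once $y^{\{A+B\}}z^{\{A+C\}}$ is rewritten as $\eta^{\{A+B\}}\otimes\eta^{\{A+C\}}$ under the canonical identification $\P(2)\cong\P_{X/S}^{(m)}\otimes_{\sO_X}\P_{X/S}^{(m)}$. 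The only point needing care here is the precise form of that identification — compare the twist in Lemma~\ref{lem:inducingstrat} — but the essential difficulty of the proof is the combinatorial identity of the second paragraph.
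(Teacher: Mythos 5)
Your proposal follows the same skeleton as the paper's proof: reduce $d^1$ to ${d_1^1}^*$ using ${d_1^0}^*(\eta^{\{I\}})=\eta^{\{I\}}\otimes 1$ and ${d_1^2}^*(\eta^{\{I\}})=1\otimes\eta^{\{I\}}$, use the logarithmic comultiplication ${d_1^1}^*(\eta_i)=\eta_i\otimes 1+\eta_i\otimes\eta_i+1\otimes\eta_i$ (your $(1+y_i)(1+z_i)-1$), reduce to one variable by multiplicativity of the $m$-PD symbols, and cancel the $B=I$ and $C=I$ terms at the end. The difference, and the gap, is the core expansion. The paper expands the three-term sum symmetrically by the $m$-PD multinomial and product formulas of \cite[1.3.6]{Berthelot:DMAI}, so the coefficient is read off in one step and nothing is left to verify; you instead group the sum as $y(1+z)+z$, expand in two stages, and then explicitly defer ``the purely numerical identity $\Gamma'_{a,b,c}=\Gamma_{a,b,c}$'', which you yourself identify as the technical core. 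Stopping at ``I expect to verify this by careful bookkeeping'' at the central step means the proposition is not actually proved.

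Moreover, the deferred step is not a formality, and you should not expect it to close as stated. Carrying your expansion out, with $q_x:=\lfloor x/p^m\rfloor$, one gets $(y(1+z)+z)^{\{k\}}=\sum_{s+t=k}\qbinom{k}{s}(1+z)^s\,y^{\{s\}}z^{\{t\}}$, then $(1+z)^s=\sum_a\binom{s}{a}z^a$, $z^a=q_a!\,z^{\{a\}}$ and $z^{\{a\}}z^{\{t\}}=\mbinom{a+t}{a}z^{\{a+t\}}$, so the coefficient of $y^{\{s\}}z^{\{a+t\}}$ is
\[
\qbinom{k}{s}\binom{s}{a}\,q_a!\,\mbinom{a+t}{a}
=\frac{k!}{a!\,b!\,c!}\cdot\frac{q_{a+b}!\,q_{a+c}!}{q_{a+b+c}!}
\qquad (b:=s-a,\ c:=t),
\]
which equals $q_a!\,\Gamma_{a,b,c}$ rather than $\Gamma_{a,b,c}$; the two agree precisely when $q_a!=1$, i.e.\ $a<2p^m$. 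For instance, with $p^m=2$, computing $\gamma_2(w^2)=w^4/2$ for $w=y+z+yz$ in the flat model $\bZ_{(p)}\{y,z\}_{(m)}$ gives coefficient $2$ for $\eta^{\{4\}}\otimes\eta^{\{4\}}$ in ${d_1^1}^*\big(\eta^{\{4\}}\big)$, whereas $\Gamma_{4,0,0}=1$. The extra factor is exactly $(\eta\otimes\eta)^{\{A\}}=q_A!\,\eta^{\{A\}}\otimes\eta^{\{A\}}$, which is the point where the paper's own proof invokes \cite[1.3.6 (iv)]{Berthelot:DMAI}; note that in every later use of the formula (the relations of the jet complex and Lemma \ref{lem:b+p^mq}) one has $|A|\leq p^m$, where $q_A!=1$ and the factor is invisible. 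So you must either locate an error in your two-stage reduction or confront this mismatch with the stated constant head-on; either way, the combinatorial verification you postponed is the entire content of the proposition, and leaving it as an expectation is a genuine gap.
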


\begin{proof}
    Since $d^1=d_1^{0\ast}-d_1^{1\ast}+d_1^{2\ast}$ and
    \[
        d_1^{0\ast}\big(\tau^{\{I\}}\big)=\tau^{\{I\}}\otimes 1 \quad\text{and}\quad
        d_1^{2\ast}\big(\tau^{\{I\}}\big)=1\otimes\tau^{\{I\}},
    \]
    it suffices to calculate ${d_1^1}^*(\eta^{\{I\}})$.
    Now, we know \cite[p.16]{Ogus:FCGTHD} that ${d_1^1}^*(\eta_i)=\eta_i\otimes 1+\eta_i\otimes\eta_i+1\otimes\eta_i$ for each $i$.
    Therefore \cite[1.3.6 (iii)]{Berthelot:DMAI},
    \begin{eqnarray*}
        {d_1^1}^*\left(\eta^{\{I\}}\right) &=& (\eta\otimes 1+\eta\otimes\eta+1\otimes\eta)^{\{I\}}\\
        &=& \sum_{A+B+C=I}\qbinom{I}{A}\qbinom{I-A}{B}(\eta\otimes 1)^{\{A\}}(\eta\otimes\eta)^{\{B\}}(1\otimes\eta)^{\{C\}},
    \end{eqnarray*}
    and we have \cite[1.3.6 (iv)]{Berthelot:DMAI}
    \[
        (\eta\otimes 1)^{\{A\}}(\eta\otimes\eta)^{\{B\}}(1\otimes\eta)^{\{c\}}
        = \mbinom{A+B}{A}\mbinom{A+C}{A}\eta^{\{A+B\}}\otimes\eta^{\{A+C\}}.
    \]
    This completes the proof.
\end{proof}

For later use, we here present some lemmas on binomial coefficients and the number $\Gamma_{A,B,C}$.

\begin{lemma}
\label{lem:0modp}
Assume that natural numbers $q, k$ and $t$ satisfies $q\geq 1$ and $0\leq k<t\leq p^m$.
Then, the number $\qbinom{p^mq+k}{t}$ is

{\rm (i)} congruent to $0$ modulo $p$ if $t<p^m$, and

{\rm (ii)} congruent to $1$ modulo $p$ if $t=p^m$.
\end{lemma}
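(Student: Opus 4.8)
The plan is to unwind the definition of $\qbinom{k}{k'}$, reduce the statement to one about $p$-adic valuations of ordinary binomial coefficients, and then settle it by counting carries in base-$p$ addition (Kummer's theorem: $v_p\binom{a+b}{a}$ is the number of carries when adding $a$ and $b$ in base $p$) together with Lucas' congruence. Set $N:=p^mq+k$ and apply the definition of $\qbinom{k}{k'}$ to the top $N$ split as $t+(N-t)$. From $0\le k<t\le p^m$ we get $0\le k<p^m$, so $\lfloor N/p^m\rfloor=q$; moreover $N-t=p^m(q-1)+\ell$ with $\ell:=p^m+k-t$ satisfying $0\le\ell<p^m$, so $\lfloor(N-t)/p^m\rfloor=q-1$; and $\lfloor t/p^m\rfloor$ equals $0$ if $t<p^m$ and $1$ if $t=p^m$. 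In either case $\mbinom{N}{t}=q!/(\lfloor t/p^m\rfloor!\,(q-1)!)=q$, hence $\qbinom{N}{t}=\frac1q\binom{N}{t}$ and
\[
v_p\!\left(\qbinom{N}{t}\right)=v_p\!\left(\binom{N}{t}\right)-v_p(q),
\]
which is a nonnegative integer because $\qbinom{k}{k'}\in\bN$ \cite{Berthelot:DMAI}. So everything reduces to estimating, by Kummer's theorem, the number of carries in the base-$p$ addition $t+(N-t)=N$.

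For (i), where $t<p^m$: the digits of $t$ occupy only positions $0,\dots,m-1$, whereas $N-t=p^m(q-1)+\ell$ has the digits of $\ell$ in positions $0,\dots,m-1$ and those of $q-1$ in positions $\ge m$. In the low block $t+\ell=p^m+k\ge p^m$, which forces a carry out of position $m-1$; in the high block the summand coming from $t$ vanishes, so the addition there is exactly $(q-1)+1=q$ and contributes $v_p(q)$ carries. Hence there are at least $1+v_p(q)$ carries, so $v_p(\binom{N}{t})\ge 1+v_p(q)$ and $\qbinom{N}{t}\equiv 0\pmod p$.

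For (ii), where $t=p^m$ and $\ell=k$, one has $\qbinom{N}{p^m}=\frac1q\binom{p^mq+k}{p^m}$, and I prove $\qbinom{p^mq+k}{p^m}\equiv 1\pmod p$ by induction on $k$ with $q\ge1$ fixed. For the base case $k=0$, using $\mbinom{p^mq}{p^m}=q$ one manipulates directly to the identity $\qbinom{p^mq}{p^m}=\binom{p^mq-1}{p^m-1}$; since $p^mq-1=p^m(q-1)+(p^m-1)$ has every digit in positions $0,\dots,m-1$ equal to $p-1$ and the digits of $q-1$ above, Lucas' congruence gives $\binom{p^mq-1}{p^m-1}\equiv 1\pmod p$. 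For the inductive step, Pascal's identity gives
\[
\qbinom{p^mq+k}{p^m}-\qbinom{p^mq+k-1}{p^m}=\frac1q\binom{p^mq+k-1}{p^m-1},
\]
and it suffices to check that the right-hand side lies in $p\bZ_p$, i.e.\ that $v_p(\binom{p^mq+k-1}{p^m-1})\ge v_p(q)+1$. By Kummer this is the number of carries in $(p^m-1)+(p^m(q-1)+k)$; writing $i_0$ for the least index with $k_{i_0}\ne0$ — which satisfies $i_0\le m-1$ because $1\le k<p^m$ — adding $p-1$ to a digit that is either nonzero or carries in forces carries out of each of the positions $i_0,\dots,m-1$, hence at least $m-i_0\ge1$ of them, and the high block once more contributes $v_p(q)$ carries from $(q-1)+1=q$; altogether there are $\ge 1+v_p(q)$ carries, which completes the induction.

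The computational core is part (ii): the same carry count already shows that $\qbinom{p^mq+k}{p^m}$ is a $p$-adic unit, but the point is the sharper congruence to $1$, which is why the argument must reduce to $k=0$ and invoke Lucas on the nose. I expect the one delicate step throughout to be the base-$p$ bookkeeping — isolating the single ``boundary'' carry at position $m$ and matching the remaining high-block carries exactly with $v_p(q)$ — after which both parts of the lemma follow.
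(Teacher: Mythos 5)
Your proof is correct, and it differs from the paper's in a genuine way. For (i), the paper simply quotes Berthelot's valuation formula [B1, (1.1.3.1)], which gives $v_p\bigl(\qbinom{p^mq+k}{t}\bigr)$ exactly as $\bigl(a(t)+a(p^m+k-t)-a(p^m+k)\bigr)/(p-1)$, i.e.\ identifies it with $v_p\binom{p^m+k}{t}$, and concludes positivity from $k<t$; you instead note $\mbinom{p^mq+k}{t}=q$, hence $\qbinom{p^mq+k}{t}=\tfrac1q\binom{p^mq+k}{t}$, and bound $v_p\binom{p^mq+k}{t}\geq 1+v_p(q)$ by Kummer's carry count --- essentially the same valuation computation, but self-contained (no citation) and as a lower bound rather than an identity. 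The real divergence is in (ii): the paper writes $\qbinom{p^mq+k}{p^m}=\tfrac1q\binom{p^mq+k}{p^m}$ as a product of fractions $(p^mq+r)/r$ with $0<r<p^m$, each a $p$-adic unit congruent to $1$ modulo $p$, which settles the congruence in one line; you argue by induction on $k$ via Pascal's identity, using Kummer to show each increment $\tfrac1q\binom{p^mq+k-1}{p^m-1}$ lies in $p\bZ_p$, and anchoring the base case $k=0$ at $\binom{p^mq-1}{p^m-1}\equiv 1$ via Lucas. Your route is longer but equally elementary, and it makes explicit where the sharpening from ``$p$-adic unit'' to ``congruent to $1$'' comes from; the paper's direct product manipulation buys brevity. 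Both arguments quietly use the integrality of $\qbinom{\cdot}{\cdot}$ (needed for the congruences to make sense), which you rightly flag with the reference to Berthelot.
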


\begin{proof}
    (i) The equation [B1, (1.1.3.1)] shows that the $p$-adic valuation of this number is equal to
    \[
    \frac{a(t)+a(p^m+k-t)-a(k)-1}{p-1}=\frac{a(t)+a(p^m+k-t)-a(p^m+k)}{p-1},
    \]
    where $a(t)$ denotes the sum of the coefficients of the $p$-adic expansion of $t$.
    Since this is the $p$-adic valuation of $\binom{p^m+k}{t}$,
    it is strictly greater than $0$ by assumption.

    (ii) We have $\qbinom{p^mq+k}{p^m}=(1/q)\binom{p^mq+k}{p^m}$, and this equals
    \[
        \frac{1}{q}\frac{p^mq+k}{k}\frac{p^mq+k-1}{k-1}\dots\frac{p^mq+k}{k}\frac{p^mq}{q}\frac{p^mq-1}{p^m-1}\dots\frac{p^mq-p^m+k+1}{k+1}.
    \]
    For $0<r<p^m$, the number $(p^mq+r)/r$ is congruent to $1$ modulo $p$, which shows the assertion.
\end{proof}

\begin{lemma}
\label{lem:1modp}
Let $q$ and $t$ be natural numbers, and assume that $0\leq t\leq p^m$.
Then, the number $\qbinom{p^mq+t}{t}$ is congruent to $1$ modulo $p$.
\end{lemma}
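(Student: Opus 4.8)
The plan is to split the verification into the two cases $t<p^m$ and $t=p^m$, since the ``correction factor'' $\mbinom{p^mq+t}{t}$ behaves differently in the two ranges, and to reduce the boundary case $t=p^m$ to Lemma \ref{lem:0modp}\,(ii).

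I would first dispose of the case $t=p^m$. Here $p^mq+t=p^m(q+1)$, so the number in question is $\qbinom{p^m(q+1)}{p^m}$. Applying Lemma \ref{lem:0modp}\,(ii) with its integer taken to be $q+1$ (which is $\geq 1$, whatever $q$ is) and with $k=0$, $t=p^m$, we conclude immediately that $\qbinom{p^m(q+1)}{p^m}\equiv 1\pmod p$. For the case $t<p^m$, I would begin by observing that the factor $\mbinom{p^mq+t}{t}$ equals $1$: writing $k=p^mq+t$, $k'=t$, $k''=p^mq$, one has $\lfloor t/p^m\rfloor=0$ and $\lfloor(p^mq+t)/p^m\rfloor=q=\lfloor p^mq/p^m\rfloor$, so $\mbinom{p^mq+t}{t}=q!/(0!\,q!)=1$. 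Hence $\qbinom{p^mq+t}{t}=\binom{p^mq+t}{t}$, and it remains to show $\binom{p^mq+t}{t}\equiv 1\pmod p$. Since $t<p^m$, the base-$p$ digits of $t$ occupy only the positions $0,\dots,m-1$, while those of $p^mq$ occupy only the positions $\geq m$; therefore the base-$p$ digits of $p^mq+t$ are exactly the digits of $t$ in positions $<m$ together with the digits of $q$ in positions $\geq m$ (no carries occur). By Lucas' congruence, $\binom{p^mq+t}{t}$ is congruent modulo $p$ to the product of $\binom{t_i}{t_i}=1$ over the positions $i<m$ and $\binom{q_j}{0}=1$ over the positions $j\geq m$, hence to $1$; equivalently, Kummer's theorem shows the absence of carries forces the $p$-adic valuation to be $0$, and Lucas pins down the residue.

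There is no genuine obstacle here once the case distinction is made; the only point one must be careful about is that the case $t=p^m$ cannot be folded into the generic argument, because there $\mbinom{p^mq+t}{t}=q+1$ rather than $1$, so one is forced to route that boundary value through Lemma \ref{lem:0modp} instead of through the Lucas computation. The remaining steps — the evaluation $\mbinom{p^mq+t}{t}=1$ and the digit bookkeeping — are routine.
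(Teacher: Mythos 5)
Your proof is correct and takes essentially the same route as the paper: the boundary case $t=p^m$ is reduced to Lemma \ref{lem:0modp}\,(ii) in exactly the same way (with $q+1$ in place of $q$ and $k=0$), and for $t<p^m$ you likewise use $\mbinom{p^mq+t}{t}=1$ to reduce to the ordinary binomial coefficient $\binom{p^mq+t}{t}$. The only difference is the last step there: the paper avoids Lucas' theorem by writing $\binom{p^mq+t}{t}=\frac{p^mq+t}{t}\cdot\frac{p^mq+t-1}{t-1}\cdots\frac{p^mq+1}{1}$ and noting each fraction is congruent to $1$ modulo $p$ since its denominator is less than $p^m$, whereas you invoke the Lucas/Kummer digit argument; both are valid.
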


\begin{proof}
    If $t=0$, we have $\qbinom{p^mq}{0}=1$ and the lemma is obvious.

    If $0<t<p^m$, we have
    \[
        \qbinom{p^mq+t}{t} = \binom{p^mq+t}{t}=\frac{p^mq+t}{t}\frac{p^mq+t-1}{t-1}\dots\frac{p^mq+1}{1}.
    \]
    Since each fraction in the right hand side is congruent to $1$ modulo $p$, so is this number itself.

    If $t=p^m$, the assertion is a consequence of Lemma \ref{lem:0modp} (ii).
\end{proof}

\begin{lemma}
\label{lem:b+p^mq}
Let $q$ and $k$ be natural numbers, and assume that $0\leq k<p^m$, that $a+b+c=p^mq+k$ and that $0\leq a+c\leq p^m$.
Then in $\bZ/p\bZ$, the number $\Gamma_{a,b,c}$ is equal to zero  except in the following three cases:

{\rm (i)} if $(a,b,c)=(p^m,p^m(q-1)+k,0)$, then $\Gamma_{a,b,c}= q$;

{\rm (ii)} if $(a,b,c)=(0,p^m(q-1)+k,p^m)$, then $\Gamma_{a,b,c}= 1$;

{\rm (iii)} if $b\geq p^mq$, then $\Gamma_{a,b,c}=\Gamma_{a,b-p^mq,c}$.
\end{lemma}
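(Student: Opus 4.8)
\emph{Proof proposal.} The plan is to expand the defining product
\[
\Gamma_{a,b,c}=\qbinom{p^mq+k}{a}\,\qbinom{b+c}{b}\,\mbinom{a+b}{a}\,\mbinom{a+c}{a}
\]
and to control each of the four factors using the hypothesis $a+c\leq p^m$ together with Lemma \ref{lem:0modp} and Lemma \ref{lem:1modp}. Two elementary reductions are available at the outset: since $a+c\leq p^m$, the integer parts of $a$ and of $c$ are each $\leq 1$, and a short case check then shows $\mbinom{a+c}{a}=1$ in every situation; and $\qbinom{n}{j}=\qbinom{n}{n-j}$ for $j\leq n$, because both $\binom{n}{j}$ and $\mbinom{n}{j}$ are invariant under $j\mapsto n-j$.

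The principal case division is on the sizes of $a$ and $c$ relative to $p^m$. If $a=p^m$, then $c=0$ and $b=p^m(q-1)+k$ with $q\geq 1$; here $\qbinom{p^mq+k}{a}\equiv 1$ by Lemma \ref{lem:0modp}(ii), $\qbinom{b+c}{b}=\mbinom{a+c}{a}=1$, and computing integer parts gives $\mbinom{a+b}{a}=\mbinom{p^mq+k}{p^m}=q$, which is (i). Symmetrically, if $c=p^m$, then $a=0$, $b=p^m(q-1)+k$, $q\geq 1$, three of the factors equal $1$, and the fourth is $\qbinom{b+p^m}{b}=\qbinom{p^mq+k}{p^m}\equiv 1$ by the symmetry and Lemma \ref{lem:0modp}(ii), which is (ii). It remains to treat the case $a<p^m$ and $c<p^m$, which I would split further according to whether $a+c\leq k$ or $a+c>k$; note that the first is equivalent to $b\geq p^mq$, and that the second forces $q\geq 1$.

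When $a+c\leq k$ (case (iii)), I would verify the identity $\Gamma_{a,b,c}=\Gamma_{a,b-p^mq,c}$ one factor at a time: each $\mbinom{\cdot}{\cdot}$ that occurs equals $1$ on both sides (all relevant integer parts are $0$, or drop by exactly $q$ when $b$ is replaced by $b-p^mq$), so each $\qbinom{\cdot}{\cdot}$ reduces to the ordinary binomial coefficient, and the identity then follows from the congruence $\binom{p^mq+x}{y}\equiv\binom{x}{y}\pmod p$ for $0\leq x,y<p^m$, which is Lucas' theorem (equivalently, the $p$-adic valuation formula [B1, (1.1.3.1)] used in the proof of Lemma \ref{lem:0modp}). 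When $a+c>k$, I must show $\Gamma_{a,b,c}\equiv 0$, which is the heart of the argument. If $a>k$, then $k<a<p^m$ with $q\geq 1$, and Lemma \ref{lem:0modp}(i) gives $\qbinom{p^mq+k}{a}\equiv 0$. If $a\leq k$, then $a+c>k$ yields $0\leq k-a<c<p^m$ and $b+c=p^mq+(k-a)$, hence by the symmetry $\qbinom{b+c}{b}=\qbinom{p^mq+(k-a)}{c}\equiv 0$, again by Lemma \ref{lem:0modp}(i). In both subcases the whole product vanishes.

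I expect the main obstacle to be organisational rather than conceptual: one must check that the case split on $(a,c)$ and on $a+c$ versus $k$ is exhaustive and that cases (i), (ii), (iii) are disjoint, and then carry out without error the many integer-part computations feeding the $\mbinom{\cdot}{\cdot}$ factors. The single step that is not purely mechanical is the dichotomy in the vanishing case $a+c>k$: one has to observe that exactly one of $\qbinom{p^mq+k}{a}$ and $\qbinom{b+c}{b}$ is in the precise shape $\qbinom{p^mq'+k'}{t}$ with $q'\geq 1$ and $k'<t<p^m$ demanded by Lemma \ref{lem:0modp}(i), according to whether $a>k$ or $a\leq k$.
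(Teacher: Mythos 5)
Your proposal is correct, and it reaches the lemma by a route that differs from the paper's in its internal organisation. The paper first condenses the four-factor definition into the two-factor expression $\Gamma_{a,b,c}=\qbinom{p^mq+k}{c}\binom{p^mq+k-c}{a}$ (using $\mbinom{a+c}{a}=1$), then proves (iii) by regrouping factorials and quoting Lemma \ref{lem:1modp}, and obtains the vanishing by the split $k<c$, $a=0$, and $a,c\neq 0$; you instead keep all four factors, let the symmetry $\qbinom{n}{j}=\qbinom{n}{n-j}$ play the role of that condensation, prove (iii) via Lucas' theorem, and organise the vanishing by the dichotomy $a>k$ versus $a\leq k$ so that in each subcase exactly one of $\qbinom{p^mq+k}{a}$, $\qbinom{b+c}{b}$ is killed by Lemma \ref{lem:0modp}(i). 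All the factor-by-factor integer-part computations you indicate do check out (in case (iii) every $\mbinom{\cdot}{\cdot}$ is $1$ on both sides, and the two exceptional evaluations give $q$ and $1$ as claimed), and your case split is exhaustive and matches the statement's trichotomy, since $b\geq p^mq$ is equivalent to $a+c\leq k$ and forces $a,c<p^m$. Two small points to tighten: in case (iii) the factor $\binom{b+c}{b}$ has lower index $\geq p^mq$, so you must flip it to $\binom{b+c}{c}$ before your congruence $\binom{p^mq+x}{y}\equiv\binom{x}{y}$ with $x,y<p^m$ applies — the symmetry you set up makes this immediate, but say it; and the parenthetical claim that this congruence is ``equivalently'' the valuation formula [B1, (1.1.3.1)] is not accurate, since that formula controls only $p$-divisibility and not the unit part, so you genuinely need Lucas here (the paper avoids it by using Lemma \ref{lem:1modp} instead). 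You also tacitly use that all $\qbinom{\cdot}{\cdot}$ and $\mbinom{\cdot}{\cdot}$ are integers so that one vanishing factor annihilates the product modulo $p$; this is standard from Berthelot's theory and is equally implicit in the paper. What your version buys is that (i), (ii) and the vanishing become uniform one-factor-at-a-time checks; what the paper's version buys is self-containedness, since it never needs Lucas.
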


\begin{proof}
    Firstly, note that the assumption $a+c\leq p^m$ shows that
    \begin{equation}
        \Gamma_{a,b,c}=\qbinom{p^mq+k}{c}\binom{p^mq+k-c}{a}.
        \label{eq:gammaabc}
    \end{equation}
    In fact, $\mbinom{a+c}{a}=1$ shows that
    \[
        \Gamma_{a,b,c}=\binom{p^mq+k}{a}\binom{p^mq+k-a}{b}\mbinom{p^mq+k}{a}^{-1}\mbinom{p^mq+k-a}{b}^{-1}\mbinom{a+b}{a}.
    \]
    By the direct calculation, the product of the last three numbers equals $\mbinom{p^mq+k}{c}^{-1}$,
    which shows (\ref{eq:gammaabc}).
    
    Secondly, we check the equations in the exceptional three cases. 
    (i) and (ii) follows from Lemma \ref{lem:0modp} (ii).
    For (iii), we compute
    \[
    \Gamma_{a,b,c}=\frac{(p^mq+k)!}{a!b!c!}=\frac{k!}{a!(b-p^mq)!c!}\frac{(p^mq+k)!}{k!(p^mq)!}\frac{(b-p^mq)!(p^mq)!}{b!}=\Gamma_{a,b-p^mq,c}
    \]
    with the aid of Lemma \ref{lem:1modp}. This completes the proof. 
    
    Finally, assume that we are not in the exceptional cases, and let us show that $\Gamma_{a,b,c}=0$.
    It directly follows from Lemma \ref{lem:0modp} if $0\leq k<c$.
    If $a=0$, then $\Gamma_{a,b,c}=\qbinom{p^mq+k}{c}$ and it vanishes again by Lemma \ref{lem:0modp}(i)
    and by $c<p^m$ (note that the case $c=p^m$ belongs to the exceptional cases).
    If $a\neq 0$ and $c\neq 0$, then $\binom{p^mq+k-c}{a}$ is congruent to $0$ modulo $p$
    because $0\leq k-c<a$,
    whence so is $\Gamma_{a,b,c}$.
\end{proof}

\section{Definition of log.\ jet complex.}
\label{ss:logjet}

Now, let us define the log.\ jet complex, which is a logarithmic version of the jet complex defined by Le Stum
and Quir\'os \cite{LeStum-Quiros:EPLCCHL}.

Let $S$ be a fine log.\ scheme and $X$ a fine log.\ smooth scheme over $S$ such that the $m$-PD structure $(\fb,\gamma)$ extends to $\sO_X$.

\begin{definition}
    \label{def:logjet}
    Let $\sK$ be the differential ideal of $N\P_{X/S}^{(m),\bullet}$ generated by the sections $\eta^{\{I\}}$,
    where $I$ runs through the elements of $\bN^n$ such that $|I|>p^m$.
    Then, the {\em log.\ jet complex} $\dot\Omega_{X/S}^{(m),\bullet}$ is by definition the quotient complex
    \[
    N\P_{X/S}^{(m),\bullet}/\sK,
    \]
    where the differential map is induced by that of $N\P_{X/S}^{(m),\bullet}$.
    The {\em linearized log.\ jet complex} $L\dot\Omega_{X/S}^{(m),\bullet}$ is defined to be the quotient complex
    \[
    \P_{X/S}\otimes N\P_{X/S}^{(m),\bullet}/\P_{X/S}\otimes\sK.
    \]
    Here, we agree that its differential map $d^r\colon L\dot\Omega_{X/S}^{(m),r}\to L\dot\Omega_{X/S}^{(m),r-1}$ is by definition
    \[
    d^r := \sum_{i=1}^{r+1}(-1)^{i+1}{d_r^i}^*.
    \]
\end{definition}

Notice that, when $\delta\in L\dot\Omega_{X/S}^{(m),1}$ and $\delta',x\in\P_{X/S}^{(m)}$, we have
\begin{equation}
    \delta\otimes x\delta'=x\delta\otimes\delta'+d^0(x)\delta\otimes\delta'
    \label{eqn:trans}
\end{equation}
(here, $d^0$ in the right-hand side is viewed as the differential of the linearized log.\ jet complex).
In fact, right $\P_{X/S}^{(m)}$-module structure of $L\dot\Omega_{X/S}^{(m),1}$ is ${d_1^1}^*$.

Now, we give a local description of the linearized log.\ jet complex $L\dot\Omega_{X/S}^{(m),\bullet}$.
Assume that $X$ has global coordinates $t=(t_1,\dots,t_n)$ over $S$.
In order to describe the structure of this complex, we here introduce some notations.
For $i=1,\dots,n$, let $\dlog t_i$ denote the image of $1\otimes \eta_i$ by the natural surjection 
$\sP_{X/S}^{(m)}\otimes N\sP_{X/S}^{(m),1}\to L\dot\Omega_{X/S}^{(m),1}$.
For $I, J_1, \dots, J_r\in\bN^n$, we put
\[
\delta(I;J_1,\dots,J_r) := \eta^{\{I\}}(\dlog t)^{J_1}\otimes\dots\otimes(\dlog t)^{J_r},
\]
where $(\dlog t)^{J_k}$ denotes the product $\prod_{l=1}^n(\dlog t_l)^{j_{k,l}}$
(see Conventions for the notation concerning multi-indices;
in particular, the $l$-th component of $J$ is denoted by $j_l$, and that of $J_k$ by $j_{k,l}$).
Therefore, if $|J_i|>p^m$ for some $i$, then $\delta(I;J_1,\dots,J_r)=0$.

Similarly, we define
\[
\hat\delta(I;J_1,\dots,J_r) := \hat\eta^{\{I\}}(\dlog\hat{t})^{J_1}\otimes\dots\otimes(\dlog\hat{t})^{J_r}
\]
for $I, J_1, \dots, J_r\in\bN^{n-1}$ where
\[
\hat\eta^{\{I\}}:=\prod_{l=1}^{n-1}\eta_l^{\{i_l\}}\hspace{4pt}{\rm and}\hspace{4pt}(\dlog\hat{t})^{J_k}=\prod_{l=1}^{n-1}(\dlog t_l)^{j_{k,l}},
\]
and also
\[
\delta_n(i; j_1,\dots,j_r) := \eta_n^{\{i\}}(\dlog t_n)^{j_1}\otimes\dots\otimes(\dlog t_n)^{j_r}
\]
for $i, j_1, \dots, j_r\in\bN$.

For $J_1,\dots,J_r\in\bN^n$, the number $s(J_1,\dots,J_r)$ is defined as follows:
\[
s(J_1,\dots,J_r) := 
\begin{cases}
	\min\left\{s\in [1,r]~\mid~j_{s,n}\neq 0\right\} & \mathrm{if}~ j_{s,n}\neq 0 ~{\rm for}~{\rm some}~ s\in [1,r],\\
	r & \mathrm{if}~ j_{s,n}=0 ~{\rm for}~{\rm all}~ s\in [1,r].
\end{cases}
\]

Let $\fI_n^{(m)}$ denote the set of elements $I\in\bN^n$ such that $0<|I|\leq p^m$.
Then for each $r$, the $\sO_X$-module $L\dot\Omega_{X/S}^{(m),r}$ is generated by
$\delta(I; J_1,\dots,J_r)$ for $I\in\bN^n$ and $J_1,\dots,J_r\in\fI_n^{(m)}$.
Their relations are
\begin{equation}
\label{eqn:relation}
-\delta(I;J_1,\dots,J_{k-1})\otimes d^1\left(\eta^{\{J\}}\right)\otimes\delta(0;J_{k+2},\dots,J_r)=0,
\end{equation}
where $I,J\in\bN^n$, $J_1,\dots,J_{k-1},J_{k+2},\dots,J_r\in\fI_n^{(m)}$ with $p^m<|J|\leq 2p^m$:
here, $d^1$ denotes the differential of Proposition \ref{thm:logPm}.
For convenience, we give a name for this relation.

\begin{definition}
    The \emph{relation of type $(s,k)$}, with $0\leq s,k\leq r$, is the section of $L\dot\Omega_{X/S}^{(m),r}$
    of the form
    \[
    -\delta(I;J_1,\dots,J_{k-1})\otimes d^1\left(\eta^{\{J\}}\right)\otimes\delta(0;J_{k+2},\dots,J_r),
    \]
    where $I, J\in\bN^n$, where $J_1,\dots,J_{k-1},J_{k+2},\dots,J_r\in \fI_n^{(m)}$ with $p^m<|J|\leq 2p^m$
    and where $s=s(J_1,\dots,J_{k-1},J_{k+2},\dots,J_r)$.
\end{definition}

Proposition \ref{thm:logPm} shows that the relation of type $(s,k)$ equals
\[
    \sum_{\substack{A+B+C=J\\ B,C\neq J}}\Gamma_{A,B,C}\,\delta(I;J_1,\dots,J_{k-1},A+B,A+C,J_{k+2},\dots,J_r).
\]

Finally, we describe the differential map.
As in the calculation in Proposition \ref{thm:logPm}, we can show that the morphism $d^0\colon L\dot\Omega_{X/S}^{(m),0}\to L\dot\Omega_{X/S}^{(m),1}$ satisfies
\[
d^0\left(\eta^{\{I\}}\right)=\sum_{\substack{A+B+C=I\\ B\neq I}}\Gamma_{A,B,C}\,\delta(A+B; A+C),
\]
and that $d^1\colon L\dot\Omega_{X/S}^{(m),1}\to L\dot\Omega_{X/S}^{(m),2}$ satisfies
\[
d^1\left( (\dlog t)^J\right)=-\sum_{\substack{A+B+C=J\\ B,C\neq J}}\Gamma_{A,B,C}\,\delta(0; A+B, A+C).
\]
In general, differential maps $d^r$ satisfy the usual Leibniz rule, hence we may calculate them
by using these two equations.

\section{Formal Log.\ Poincar\'e Lemma Modulo $p$.}

We may now state the main theorem of this article, the log.\ crystalline Poincar\'e lemma of higher level
modulo $p$.

\begin{theorem}
Let $(S,\fa,\fb,\gamma)$ be a fine log.\ $m$-PD scheme,
and $X$ be a fine log.\ smooth scheme over $S$ such that the $m$-PD structure $(\fb,\gamma)$ extends to $\sO_X$.
Assume that $p\sO_X=0$.
Then, the log.\ jet complex $L\dot\Omega_{X/S}^{(m),\bullet}$ is a resolution of $\sO_X$.
\label{thm:trueformalLSQ}
\end{theorem}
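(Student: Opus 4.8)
The plan is to prove the equivalent assertion that the augmented complex
\[
0\to\sO_X\to L\dot\Omega_{X/S}^{(m),0}\xrightarrow{d^0}L\dot\Omega_{X/S}^{(m),1}\xrightarrow{d^1}\cdots
\]
is exact. This is \'etale-local on $X$; combining locality with the structure theory of log.\ smooth morphisms and the non-logarithmic exact Poincar\'e lemma of Le Stum and Quir\'os \cite{LeStum-Quiros:EPLCCHL} (which disposes of the ordinary smooth coordinate directions), I would reduce to the case where $X$ admits global coordinates $t=(t_1,\dots,t_n)$ over $S$. Then the explicit local description of Section~\ref{ss:logjet} is available: $L\dot\Omega_{X/S}^{(m),r}$ is the $\sO_X$-module on the generators $\delta(I;J_1,\dots,J_r)$ ($I\in\bN^n$, $J_1,\dots,J_r\in\fI_n^{(m)}$) subject to the relations of type $(s,k)$, and the differentials are determined by the two displayed formulas for $d^0(\eta^{\{I\}})$ and $d^1((\dlog t)^J)$, the Leibniz rule, and the transposition identity (\ref{eqn:trans}).

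The heart of the proof is an induction on the number $n$ of coordinates, carried out by \emph{constructing an explicit contracting homotopy}. For $n=0$ the complex is $\sO_X$ placed in degree $0$ and there is nothing to prove. For $n\ge 1$, the generators $\delta(I;J_1,\dots,J_r)$ with $i_n=0$ and $j_{k,n}=0$ for every $k$ span a subcomplex $\widehat{L\dot\Omega}_{X/S}^{(m),\bullet}$, a direct summand (the complement being the subcomplex of positive $t_n$-weight), which is exactly the complex produced by the recipe of Section~\ref{ss:logjet} for the first $n-1$ coordinates — its generators are the $\hat\delta(\dots)$ — and hence a resolution of $\sO_X$ by the inductive hypothesis. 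It therefore suffices to exhibit an $\sO_X$-linear homotopy contracting the complementary subcomplex of positive $t_n$-weight: maps $h^r\colon L\dot\Omega_{X/S}^{(m),r}\to L\dot\Omega_{X/S}^{(m),r-1}$, vanishing on $\widehat{L\dot\Omega}_{X/S}^{(m),\bullet}$, with $d^{r-1}h^r+h^{r+1}d^r=\id-\iota\pi$, where $\pi$ is the projection onto $\widehat{L\dot\Omega}_{X/S}^{(m),\bullet}$ and $\iota$ its inclusion.

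I would define $h$ on the positive-weight generators by a formula that \emph{integrates along the direction $t_n$}: steered by the index $s=s(J_1,\dots,J_r)$ — which records the first slot in which $t_n$ occurs (and equals $r$ when $t_n$ does not occur), and which is precisely the invariant the whole case analysis is organized around — the map $h$ deletes the $\dlog t_n$-factor sitting in slot $s$ (a $\delta_n(\dots)$-type factor) and transfers its weight into the divided-power exponent $\eta_n^{\{i_n\}}$, with a combinatorial coefficient assembled from the numbers $\binom{\cdot}{\cdot}$, $\mbinom{\cdot}{\cdot}$ and $\qbinom{\cdot}{\cdot}$ of the Conventions; in degree $0$ one adds the evident splitting $L\dot\Omega_{X/S}^{(m),0}\to\sO_X$. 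The role of the hypothesis $p\sO_X=0$ is that \emph{modulo $p$} these coefficients — which, once the differential of Proposition~\ref{thm:logPm} has been applied, become sums of products of the numbers $\Gamma_{A,B,C}$ — collapse onto the very short list of surviving values recorded in Lemmas~\ref{lem:0modp}, \ref{lem:1modp} and \ref{lem:b+p^mq}; this collapse is what makes a usable closed formula possible at all.

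The hard part — and the reason the next two sections are needed — will be verifying that this prescription actually works. Two checks are required. First, $h$ must descend to the quotient by the relations of type $(s,k)$, i.e.\ it must carry each such relation to a combination of relations; already this unwinds to nontrivial identities among sums of $\Gamma_{A,B,C}$'s. Second, and more demanding, the homotopy identity $d^{r-1}h^r+h^{r+1}d^r=\id-\iota\pi$ must be checked on every generator, a computation organized case by case according to the values of $s$ and $k$ and relying at each step on the mod-$p$ vanishing together with the three exceptional normalizations of $\Gamma_{A,B,C}$ in Lemma~\ref{lem:b+p^mq}. It is exactly here that $p\sO_X=0$ is indispensable: without it the combinatorial collapse fails, and one would instead be forced to invoke the local freeness of the log.\ jet complex, which is still open even in the non-logarithmic case. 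Once this verification is carried through, the induction closes and the theorem follows.
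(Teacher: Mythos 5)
Your overall strategy is the same as the paper's: reduce to global coordinates, and for the last coordinate build an $\sO_X$-linear homotopy $h^\bullet$ with $dh+hd=\id-\pi_n$ (your $\iota\pi$ is exactly the paper's projector $\pi_n$), then compose over the coordinates; the combinatorial input is Lemmas \ref{lem:0modp}--\ref{lem:b+p^mq}. However, there is a genuine gap: the homotopy is never actually defined, and the one place where your stated recipe ("delete the $\dlog t_n$ in slot $s$ and transfer its exponent into the divided power $\eta_n^{\{i_n\}}$") is the natural guess is precisely where it fails. When $i_n=p^mq$ with $q\geq 1$ and the slot-$s$ exponent is $j_n=p^m$, Lemma \ref{lem:b+p^mq}(i) forces the extra term $q\,\delta(p^mq;p^m)$ in $d^0\big(\eta_n^{\{p^m(q+1)\}}\big)$, so setting $h^1\big(\delta(p^mq;p^m)\big)=\eta_n^{\{p^m(q+1)\}}$ does \emph{not} satisfy the homotopy identity; the paper's $h^1$ must instead take the value $\sum_{u=q}^{\sigma(q)-1}(-1)^{u-q}\frac{u!}{q!}\eta_n^{\{p^m(u+1)\}}$, a correction that terminates only because $p\sO_X=0$ (in characteristic $0$ the analogous series would be infinite). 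This is the central idea of the proof, and it is exactly the content your phrase "a combinatorial coefficient assembled from the numbers of the Conventions" leaves unspecified; so the hypothesis $p\sO_X=0$ is used not only for the "collapse" of the $\Gamma_{A,B,C}$ but to make the homotopy exist at all.

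Two further points. First, "define $h$ on the generators $\delta(I;J_1,\dots,J_r)$ and check the relations" needs care, because these sections are not free generators of $L\dot\Omega_{X/S}^{(m),r}$: the paper first passes through the intermediate quotient $\overline{M}^r$ (killing the type-$(k,k)$ relations with $j_n\geq p^m$) and proves it is free on an explicit basis (Lemma \ref{lem:generatorbarM}); without this, your prescription, which is stated only in terms of the slot $s$, is not even well posed, and the subsequent verification that all remaining type-$(s,k)$ relations map to zero (the paper's Lemmas \ref{lem:omega2}, \ref{lem:homotopyr=2} and the reduction $s<k\Rightarrow s=k-1$) is the bulk of Section \ref{ss:homotopy}, not a routine check. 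Second, your reduction step invoking the non-logarithmic exact Poincar\'e lemma of Le Stum--Quir\'os to "dispose of the ordinary smooth coordinate directions" is neither needed nor justified as stated: that result concerns a different complex, and splitting $L\dot\Omega_{X/S}^{(m),\bullet}$ into log and non-log directions would itself require an argument; the paper simply works \'etale locally with coordinates $t_i\in M_X$ whose $\dlog t_i$ form a basis of $\Omega^1_{X/S}$ and treats all directions uniformly. As it stands, your text is a correct outline of the paper's strategy, but the construction and verification that constitute the proof are missing.
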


In order to prove this theorem, the question being local, we assume that $X$ has global coordinates $t=(t_1,\dots,t_n)$.

\begin{proposition}
\label{thm:formalLSQ}
Fix an index $i\in\{1,\dots,n\}$, and let $\pi_i$ be the projector on $L\dot\Omega_{X/S}^{(m),\bullet}$ as a morphism of graded $\sO_X$-algebra defined by
\[
    \pi_i(\eta_j)  = \begin{cases} 0 & \text{ if } i=j\\ \eta_j & \text{ if } i\neq j\end{cases} \qquad \text{and}\qquad
    \pi_i(\dlog t_j)  = \begin{cases} 0 & \text{ if } i=j\\ \dlog t_j & \text{ if } i\neq j.\end{cases}
\]
Then, there exists a family of $\sO_X$-linear morphisms
\[
    \left\{ h^r_i\colon L\dot\Omega_{X/S}^{(m),r}\to L\dot\Omega_{X/S}^{(m),r-1}\right\}_{r\geq 1}
\]
which is a homotopy connecting $\id_{L\dot\Omega_{X/S}^{(m),\bullet}}$ and $\pi_i$,
that is, which satisfies the following equations:
\begin{align*}
& (h^{r+1}_i\circ d^{r}+d^{r-1}\circ h^r_i)\big(\delta(I;J_1,\dots,J_r)\big) \\
= & \begin{cases}
0 & \text{ if } i_n=j_{1,i}=\dots=j_{r,i}=0,\\
\delta(I;J_1,\dots,J_r) & \text{ otherwise}.
\end{cases}
\end{align*}
\end{proposition}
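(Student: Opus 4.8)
The plan is to write down the maps $h^r_i$ explicitly on generators and to verify the homotopy identity by a direct computation, in the spirit of the classical ``integration'' homotopy for the de Rham complex. Since the construction of $L\dot\Omega_{X/S}^{(m),\bullet}$ is symmetric in the coordinates, I may relabel $t_1,\dots,t_n$ and assume $i=n$; the objects $\hat\delta(\cdot)$, $\delta_n(\cdot)$ and the function $s(J_1,\dots,J_r)$ introduced in Section~\ref{ss:logjet} are tailored to this choice.

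First I would define $h^r_n$ on a generator $\delta(I;J_1,\dots,J_r)$ with $J_1,\dots,J_r\in\fI_n^{(m)}$, by cases governed by $s=s(J_1,\dots,J_r)$, the first tensor slot in which $t_n$ occurs (with $s=r$ if it never occurs). If $i_n=0$ and $j_{s,n}=0$ --- equivalently, if the generator is fixed by $\pi_n$ --- set $h^r_n=0$. Otherwise $h^r_n$ should ``integrate out'' the first appearance of the $n$-direction: it transfers the $t_n$-content of that appearance --- the divided power $\eta_n^{\{i_n\}}$ together with the first nonzero $n$-component among the $J_k$ --- into a divided power $\eta_n^{\{\cdot\}}$ absorbed in the coefficient $\eta^{\{I\}}$, deleting one tensor slot, with binomial-type coefficients in $\bZ/p\bZ$ built from the numbers $\binom{\cdot}{\cdot}$, $\mbinom{\cdot}{\cdot}$, $\qbinom{\cdot}{\cdot}$ of the Conventions. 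Two things then have to be checked: that $h^r_n$ is compatible with the transposition rule (\ref{eqn:trans}), and that it annihilates every relation of type $(s',k)$, so that it descends to $L\dot\Omega_{X/S}^{(m),r}$. The second point is handled by combining Proposition~\ref{thm:logPm} with Lemmas~\ref{lem:0modp} and~\ref{lem:1modp} and, above all, with the identity $\Gamma_{a,b,c}=\Gamma_{a,b-p^mq,c}$ and the boundary values of $\Gamma_{a,b,c}$ from Lemma~\ref{lem:b+p^mq}; this is exactly where the hypothesis $p\sO_X=0$ enters.

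With the $h^r_n$ in hand, I would verify $h^{r+1}_n\circ d^r+d^{r-1}\circ h^r_n=\id-\pi_n$ on each generator. Expanding $d^r$ by the Leibniz rule from the two basic formulas for $d^0(\eta^{\{I\}})$ and $d^1((\dlog t)^J)$ recorded at the end of Section~\ref{ss:logjet}, both sides become $\sO_X$-combinations of generators whose coefficients are products of numbers $\Gamma_{A,B,C}$, and one matches them slot by slot. The matching reduces to a short list of congruences modulo $p$ --- a telescoping of the ``integration'' coefficients of $h$ against the coefficients of $d$ --- all furnished by Lemmas~\ref{lem:0modp}--\ref{lem:b+p^mq}; the generators with $i_n=j_{1,n}=\dots=j_{r,n}=0$ turn out to be exactly those on which all contributions cancel, which gives the value $0$ there and $\delta(I;J_1,\dots,J_r)$ otherwise.

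The main obstacle is the bookkeeping in the last two steps. One might hope to split $L\dot\Omega_{X/S}^{(m),\bullet}$ as a tensor product over the coordinates and reduce to a one-variable statement, but the level-$m$ truncation ($|J_k|\le p^m$ in every tensor slot and $\eta^{\{I\}}=0$ for $|I|>p^m$) couples the coordinates, so a naive Künneth fails and the naive de Rham primitive overshoots the truncation threshold and has to be corrected. Controlling these corrections, the interaction of $h^r_n$ with the relations of type $(s,k)$, and the degeneracy at the boundary $|J|=p^m$ is precisely the delicate mod-$p$ analysis of $\Gamma_{A,B,C}$ encapsulated in Lemma~\ref{lem:b+p^mq}; this is the ``complicated calculation'' referred to in the introduction, and it is what occupies the next two sections.
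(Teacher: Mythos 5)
Your outline follows the same overall route as the paper (reduce to $i=n$, build an explicit ``integration'' homotopy slot by slot, check that it kills the relations of type $(s,k)$ so that it descends, then verify the homotopy identity using the mod-$p$ behaviour of $\Gamma_{A,B,C}$), but it stops exactly where the content of the proposition begins: no formula for $h^r_n$ is actually produced, and the one concrete feature you do commit to is wrong. You propose to set $h^r_n=0$ precisely on the generators fixed by $\pi_n$ (``$i_n=0$ and $j_{s,n}=0$'') and to ``integrate out'' the $n$-direction on all the others. The homotopy that actually works (Proposition \ref{thm:homotopy}) vanishes on a much larger set: $h^1\big(\delta(I;J)\big)=0$ whenever $i_n$ is \emph{not divisible by} $p^m$, and also whenever $\hat J\neq 0$, even though such generators are not fixed by $\pi_n$; on those generators the identity $h\circ d+d\circ h=\id$ is achieved only because $d$ itself regenerates the term, as in Case 1 of Lemma \ref{lem:r=2}. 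Moreover, at the boundary $j_n=p^m$ the naive primitive $\eta_n^{\{i_n+j_n\}}$ must be replaced by the corrected sum $\sum_{u=q}^{\sigma(q)-1}(-1)^{u-q}\tfrac{u!}{q!}\,\hat\eta^{\{\hat I\}}\eta_n^{\{p^m(u+1)\}}$ (with $\sigma(q)$ the least multiple of $p$ exceeding $q$); you acknowledge that ``corrections'' are needed but do not identify them, and without this specific expression neither the annihilation of the relations nor the computation $(d^0\circ h^1)\big(\delta(p^mq;p^m)\big)=\delta(p^mq;p^m)$ goes through.

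There is a second gap in the descent step. For $r\geq 2$ the formula for $h^r$ is only prescribed on generators with $j_{s,n}<p^m$ or $s=r$; on the remaining generators (first $n$-occurrence equal to $p^m$ in an interior slot) the value is \emph{forced} by the relations with $j_n\geq p^m$, and one must know that these generators together with the prescribed ones give a \emph{free} module $\overline{M}^r$ (Lemma \ref{lem:generatorbarM}) before any map exists at all; your plan of ``define on all generators and check the relations'' presupposes a formula in those cases that you have not given, and the sign rule $h^r\big(\delta(\dots,p^m\one_n,J_{s+1},\dots)\big)=-h^r\big(\delta(\dots,J_{s+1},p^m\one_n,\dots)\big)$ of Lemma \ref{lem:omega2} is exactly the nontrivial compatibility that makes the forced values consistent. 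Finally, the verification of the homotopy identity is not a routine slot-by-slot matching: it requires the reduction lemma $h^{r+1}\big(d^1(H)\otimes\Delta\big)=(h^2\circ d^1)(H)\otimes\Delta$ (Lemma \ref{lem:induction}) together with the Leibniz-type splitting of $h^{r+1}\circ d^r+d^{r-1}\circ h^r$ into $\Delta\otimes(h^2\circ d^1+d^0\circ h^1)(H)\otimes\Delta'$, and then the rank-one computations of Lemmas \ref{lem:r=0}--\ref{lem:finalspecial}. In short, the strategy is the right one, but the proposal does not yet contain a proof: the precise homotopy formula, its well-definedness, and the cancellation mechanism at $j_n=p^m$ all remain to be supplied.
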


If we admit this proposition, Theorem \ref{thm:trueformalLSQ} is easily proved.
Indeed, Proposition \ref{thm:formalLSQ} shows that the identity map of $\id_{L\dot\Omega_{X/S}^{(m),\bullet}}$ is
homotopic to $\pi_1\circ\pi_2\circ\dots\circ\pi_n$, which maps all $\eta_i$'s and $(\dlog t_i)$'s to zero.

In order to prove Proposition \ref{thm:formalLSQ}, we may assume that $i=n$ after a change of indices.
The following two sections are devoted to proving Proposition \ref{thm:formalLSQ} for $i=n$;
We define morphisms $h^r_n$ in Section \ref{ss:homotopy},
and prove that they satisfy the properties of Proposition \ref{thm:formalLSQ} in Section \ref{ss:logPL}.
Since $i$ is fixed to be $n$, we simply write $h^r$ for $h^r_n$.

\section{Construction of a homotopy modulo $p$.}
\label{ss:homotopy}

Let $X$ be a fine log.\ smooth scheme over $S$ such that the $m$-PD structure $(\fb,\gamma)$ extends to $\sO_X$,
and assume that $X$ has global coordinates $t=(t_1,\dots,t_n)$ and that $p\sO_X=0$.

\begin{proposition}
\label{thm:homotopy}
There uniquely exists a family of $\sO_X$-linear morphisms
\[
    \left\{h^r\colon L\dot\Omega_{X/S}^{(m),r}\to L\dot\Omega_{X/S}^{(m),r-1}\right\}_{r\geq 1}
\]
that satisfies the following two conditions.

{\rm (i)} The morphism $h^1$ satisfies $h^1\big(\delta(I;J)\big)=0$ if $i_n$ is not divisible by $p^m$.
When $i_n=p^mq$ with a natural number $q$, it satisfies
\[
h^1\big(\delta(I; J)\big)=
\begin{cases}
	\hat\eta^{\{\hat{I}\}}\eta_n^{\{i_n+j_n\}} & \mathrm{if}~ \hat{J}=0~\mathrm{and}~0<j_n<p^m,\\
	\displaystyle\sum_{u=q}^{\sigma(q)-1}(-1)^{u-q}\frac{\,u!\,}{q!}\hat\eta^{\{\hat{I}\}}\eta_n^{\{p^m(u+1)\}} & \mathrm{if}~\hat{J}=0~\mathrm{and}~j_n=p^m,\\
    0 & \mathrm{if}~\hat{J}\neq 0~\mathrm{or}~j_n=0.
\end{cases}
\]
Here, $\sigma(q)$ denotes the least multiple of $p$ which is strictly greater than $q$.

{\rm (ii)} For $r\geq 2$, let $I\in\bN^n$ and $J_1, \dots, J_r\in\fI_n^{(m)}$, and write $s:=s(J_1,\dots,J_r)$.
Then, if $0<j_{s,n}<p^m$, or if $j_{s,n}=p^m$ and $s=r$, the morphism $h^r$ maps $\delta(I; J_1,\dots,J_r)$ to
\[
(-1)^{s-1}\delta(\hat{I}; J_1,\dots,J_{s-1})\otimes h^1\big(\delta(i_n\one_n; J_s)\big)\delta(0;J_{s+1},\dots,J_r).
\]
\end{proposition}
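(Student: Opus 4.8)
The plan is to define each map $h^r$ on the $\sO_X$-module generators $\delta(I;J_1,\dots,J_r)$ of $L\dot\Omega_{X/S}^{(m),r}$ by the formulas of (i) and (ii), supplemented by the rule that $h^r(\delta(I;J_1,\dots,J_r))=0$ for the generators not covered there — namely those with $j_{1,n}=\dots=j_{r,n}=0$, and those whose pivot slot equals $p^m\one_n$ without being the last slot — and then to check that this prescription respects the defining relations of $L\dot\Omega_{X/S}^{(m),r}$, i.e.\ the relations of type $(s,k)$. Granting that compatibility, the resulting $\sO_X$-linear maps manifestly satisfy (i) and (ii), and they are the unique such family because the $\delta(I;J_1,\dots,J_r)$ generate $L\dot\Omega_{X/S}^{(m),r}$. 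The standing hypothesis $p\sO_X=0$ is used throughout: it turns the congruences of Lemmas \ref{lem:0modp}, \ref{lem:1modp} and \ref{lem:b+p^mq} into honest equalities between the scalars occurring in the formulas.

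For $r=1$ there is nothing to check beyond the formula itself: relations of type $(s,k)$ only occur for $r\geq 2$, so the degree-one generators $\delta(I;J)$ are subject to no relations, and one merely records that the case distinction in (i) — first on whether $p^m\mid i_n$, then on whether $\hat{J}=0$ with $0<j_n<p^m$, $\hat{J}=0$ with $j_n=p^m$, or $\hat{J}\neq 0$ — exhausts all of them, and that the right-hand sides lie in $L\dot\Omega_{X/S}^{(m),0}$ (the large exponents $i_n+j_n$ and $p^m(u+1)$ appear only in the untruncated linearization factor).

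For $r\geq 2$ one must show that $h^r$ annihilates every relation of type $(s_0,k)$, which by Proposition \ref{thm:logPm} equals $\sum_{A+B+C=J}\Gamma_{A,B,C}\,\delta(I;J_1,\dots,J_{k-1},A+B,A+C,J_{k+2},\dots,J_r)$ with $p^m<|J|\leq 2p^m$. One applies $h^r$ term by term and sorts the terms by the position of the inserted pair $(A+B,A+C)$ relative to the first form-slot carrying $t_n$. In all cases but one, the sum factors as a scalar, or a left factor $\delta(\hat{I};\dots)$, times a relation of type $(s',k')$ in strictly smaller degree, and hence vanishes: this is because formula (ii) transports the pivot slot, untouched, past the rewriting performed by $d^1(\eta^{\{J\}})$, while the differential ideal $\sK$ cutting out the relations is stable under the twisted product (\ref{eqn:trans}) of $L\dot\Omega_{X/S}^{(m),\bullet}$; concretely, inserting slots of vanishing $n$-component, or inserting slots to the right of a form-slot already carrying $t_n$, leaves the pivot where it was.

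The single substantial case is when $J_1,\dots,J_{k-1}$ all have vanishing $n$-component while $J$ itself does not: the inserted pair then creates or contains the pivot, so $h^r$ of each surviving term is $\Gamma_{A,B,C}$ times a value $h^1(\delta(i_n\one_n;A+B))$ or $h^1(\delta(i_n\one_n;A+C))$, and one has to prove that the resulting sum over $A+B+C=J$ vanishes in $L\dot\Omega_{X/S}^{(m),r-1}$. Here the mod-$p$ evaluations of $\Gamma_{A,B,C}$ from Lemma \ref{lem:b+p^mq} collapse the sum to a handful of triples, which one then combines with the precise two-branch shape of $h^1$; in particular the alternating telescoping coefficient $\sum_{u=q}^{\sigma(q)-1}(-1)^{u-q}u!/q!$ in the $j_n=p^m$ branch of (i) is tailored exactly so that the contributions straddling the threshold $|J_s|=p^m$ cancel. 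This bookkeeping with $m$-PD multinomial coefficients is the real obstacle; once it is carried out, the case analysis above is exhaustive, so $h^r$ respects every relation of type $(s,k)$, which completes the construction.
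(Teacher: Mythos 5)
Your construction breaks down at the supplementary rule you add before the relation-check: setting $h^r\big(\delta(I;J_1,\dots,J_r)\big)=0$ whenever the pivot slot equals $p^m\one_n$ without being the last slot is \emph{not} a free choice, and it is incompatible with the relations of $L\dot\Omega_{X/S}^{(m),r}$. Concretely, take $r=2$, $0<k<p^m$, and the relation of type $(s,k)$ coming from $d^1\big(\eta^{\{(p^m+k)\one_n\}}\big)$: since $p\sO_X=0$, Lemma \ref{lem:b+p^mq} kills all coefficients $\Gamma_{a,b,c}$ except those for $(a,b,c)=(0,k,p^m)$ and $(0,p^m,k)$ (the term for $(p^m,k,0)$ dies because its first slot has length $>p^m$), so the relation reads
\[
\delta(I;k\one_n,p^m\one_n)+\delta(I;p^m\one_n,k\one_n)=0
\quad\text{in } L\dot\Omega_{X/S}^{(m),2}.
\]
Any well-defined $h^2$ must therefore send $\delta(I;p^m\one_n,k\one_n)$ to $-h^2\big(\delta(I;k\one_n,p^m\one_n)\big)$, which by condition (ii) equals $-\hat\eta^{\{\hat I\}}\eta_n^{\{i_n+k\}}(\dlog t_n)^{p^m}\neq 0$ when $p^m\mid i_n$; your prescription assigns $0$ there, so your map on the free module does not annihilate this relation and does not descend. (This is also why your case analysis looks easy: with the erroneous $0$-assignment, the relations with pivot $p^m\one_n$ to the left of the insertion point become vacuously ``satisfied,'' while the failure has been pushed into the relations you treat only in outline.)

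The paper avoids exactly this trap: it first passes to the intermediate quotient $\overline{M}^r$ and proves (Lemma \ref{lem:generatorbarM}) that the generators covered by (i)--(ii) form a \emph{basis} of $\overline{M}^r$, so the value of $h^r$ on a generator whose pivot is $p^m\one_n$ in a non-final position is not prescribed ad hoc but forced by the relations, yielding the signed transposition rule of Lemma \ref{lem:omega2} (move the $p^m\one_n$ slot to the right at the cost of a sign) rather than $0$. The genuinely hard verification is then the one your sketch does not address: relations of type $(s,k)$ with $s<k$ and $J_s=p^m\one_n$, which the paper handles by reducing to $s=k-1=2$, exploiting $(d\circ d)\big((\dlog t)^{\{K\}}\big)=0$ with $K=J+p^m\one_n$, the vanishing $(\dlog t_n)^{p^m}\otimes(\dlog t_n)^{p^m}=0$, and a descending induction on $j_n$. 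Finally, even the case you do identify as substantial (the inserted pair creating the pivot) is only described, not proved; in the paper this is the explicit coefficient computation of Lemma \ref{lem:homotopyr=2}, where the telescoping sum $\sum_{u=q}^{\sigma(q)-1}(-1)^{u-q}u!/q!$ is actually used. So the proposal has both a concrete error (the $0$-assignment) and a missing core computation.
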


This section is devoted to the proof of this proposition.
Here, we temporarily introduce two quotient graded $\sO_X$-modules 
$M^{\bullet}$, $\overline{M}^{\bullet}$ of $\sP_{X/S}^{(m)}\otimes N\sP_{X/S}^{(m), \bullet}$, that fit in the sequence of canonical surjections
\[
    \sP_{X/S}^{(m)}\otimes N\sP_{X/S}^{(m),\bullet}\to M^{\bullet}\to\overline{M}^{\bullet}
    \to L\dot\Omega_{X/S}^{(m),\bullet}.
\]
Firstly, $M^{\bullet}$ is defined as
\[
    M^{\bullet} := \sP_{X/S}^{(m)}\otimes \left(N\sP_{X/S}^{(m),\bullet}/ \sK'\right),
\]
where $\sK'$ denotes the ideal of $N\P_{X/S}^{(m),\bullet}$ generated by the sections $\eta^{\{I\}}$
for all $I\in\bN^n$ such that $|I|>p^m$.
Therefore, the $r$-th part $M^r$ is the free $\sO_X$-module with basis $\delta(I; J_1,\dots,J_r)$,
where $I\in\bN^n$ and $J_1,\dots, J_r\in\fI_n^{(m)}$.
Secondly, $\overline{M}^{\bullet}$ is defined to be the quotient of $M^{\bullet}$ by its submodule generated by all relations
of type $(k,k)$, for some $k$, such that $j_n\geq p^m$;
in other words, each term $\overline{M}^r$ is the quotient of $M^r$ by the sections of the form
\[
    \delta(I; J_1,\dots,J_{k-1})\otimes d^1\big(\eta^{J}\big)\otimes\delta(0;J_{k+2},\dots,J_r),
\]
where $I, J\in\bN^{n}$, $J_1,\dots,J_{k-1},J_{k+2},\dots,J_r\in\fI_n^{(m)}$
with $j_{1,n}=\dots=j_{k-1,n}=0$ and $j_n\geq p^m$.

Consequently, the kernel of the $r$-th term of the last surjection is generated by the relations
of type $(s,k)$ such that $s\neq k$ or that $s=k$ and $j_n<p^m$.

In fact, we can explicitly describe the structure of the modules $\overline{M}^r$.

\begin{lemma}
    For each natural number $r$, the $\sO_X$-module $\overline{M}^r$ is free.
    The sections $\delta(I;J_1,\dots,J_r)$ form a basis of $\overline{M}^r$ when $I$ runs through $\bN^n$
    and $J_1,\dots,J_r$ run through $\fI_n^{(m)}$ so that $j_{s(J_1,\dots,J_r),n}<p^m$ or $s(J_1,\dots,J_r)=r$.
    
    In particular, there exists a unique morphism of $\sO_X$-modules $h^r\colon\overline{M}^r\to L\dot\Omega_{X/S}^{(m),r-1}$ which satisfies
    the same conditions as in Proposition \ref{thm:homotopy}.
    \label{lem:generatorbarM}
\end{lemma}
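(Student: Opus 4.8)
The plan is to use the relations defining $\overline M^r$ to put every generator of the free module $M^r$ into a ``normal form'', thereby identifying the displayed sections as a basis; once this is done, the existence and uniqueness of $h^r$ is automatic, since Proposition \ref{thm:homotopy} prescribes its values precisely on these basis elements.

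\textbf{Step 1: the shape of the relations.}
First I would make the defining relations of $\overline M^r$ explicit. By Proposition \ref{thm:logPm}, the relation of type $(k,k)$ with $j_n\ge p^m$ equals
\[
\sum_{\substack{A+B+C=J\\ B,C\ne J}}\Gamma_{A,B,C}\,\delta(I;J_1,\dots,J_{k-1},A+B,A+C,J_{k+2},\dots,J_r),
\]
where $j_{1,n}=\dots=j_{k-1,n}=0$, $p^m<|J|\le 2p^m$ and $j_n\ge p^m$, and a summand already vanishes in $M^r$ when $|A+B|>p^m$ or $|A+C|>p^m$. Since $p\sO_X=0$, Lemma \ref{lem:b+p^mq}, applied on the $n$-th coordinate (which carries all the ``large'' degree) together with the factorization $\Gamma_{A,B,C}=\prod_{l}\Gamma_{a_l,b_l,c_l}$, kills almost every surviving coefficient. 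The expected outcome — whose careful verification, with the multi-index bookkeeping in the coordinates $\ne n$, is the main technical obstacle — is: if $J=2p^m\one_n$ the relation degenerates to $\delta(I;J_1,\dots,J_{k-1},p^m\one_n,p^m\one_n,J_{k+2},\dots,J_r)=0$; otherwise $p^m\le j_n<2p^m$, exactly one surviving summand has its $k$-th slot equal to $p^m\one_n$ (its coefficient being the unit $1$), and among the remaining surviving summands at most one is \emph{not} reduced in the sense below — namely, when $j_n=p^m$, the summand with $k$-th slot of zero $n$-degree and $(k+1)$-st slot equal to $p^m\one_n$.

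\textbf{Step 2: normal form and spanning.}
Call $(I;J_1,\dots,J_r)$ \emph{reduced} if $j_{s,n}<p^m$ or $s=r$, where $s=s(J_1,\dots,J_r)$; these are the tuples in the claimed basis. Let $F^r$ denote the free $\sO_X$-module on the reduced tuples. Using Step 1 I would define an $\sO_X$-linear $\rho\colon M^r\to F^r$ by recursion on the position $s$ of the first slot of nonzero $n$-degree (decreasing $r-s$): a reduced generator goes to itself, and a non-reduced generator $\delta(I;J_1,\dots,J_r)$ (so $J_s=p^m\one_n$, $s<r$) is the distinguished ``$k$-th slot $=p^m\one_n$'' summand of the relation of type $(s,s)$ with $J$ defined to be $J_{s+1}$ with its $n$-th coordinate raised by $p^m$ (this is a legitimate relation, as $0<|J_{s+1}|\le p^m$); one then sets $\rho(\delta(I;J_1,\dots,J_r))$ equal to minus $\rho$ applied to the other summands of that relation. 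By Step 1 those other summands are reduced except possibly for a single one whose first nonzero-$n$-degree slot sits at position $s+1$, so the recursion terminates. Moreover the same relation shows $\delta(I;J_1,\dots,J_r)\equiv\rho(\delta(I;J_1,\dots,J_r))$ in $\overline M^r$, so the reduced tuples span $\overline M^r$; write $\varphi\colon F^r\to\overline M^r$ for the resulting surjection.

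\textbf{Step 3: freeness and the homotopy.}
To finish, I would check that $\rho$ annihilates every defining relation of $\overline M^r$. By Step 1 such a relation is either of the degenerate form $\delta(I;\dots,p^m\one_n,p^m\one_n,\dots)=0$, and then $\rho$ sends the generator $\delta(I;\dots,p^m\one_n,p^m\one_n,\dots)$ to $0$ by the recursion rule itself (which uses that very relation); or else it is exactly the relation used to define $\rho$ on its distinguished summand, so that $\rho$ of the whole relation is $\rho(\text{distinguished summand})+\rho(\text{rest})=0$ by construction. Hence $\rho$ factors through a map $\overline M^r\to F^r$ splitting $\varphi$, so $\varphi$ is an isomorphism and the reduced tuples form an $\sO_X$-basis of $\overline M^r$. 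Finally, on a reduced tuple exactly one clause of Proposition \ref{thm:homotopy} is in force — with the understanding that $h^r$ vanishes on a tuple all of whose $J_k$ have zero $n$-degree, which is what the displayed formula yields there since $h^1$ then vanishes — so the formulas of Proposition \ref{thm:homotopy} prescribe $h^r$ on a basis of $\overline M^r$ and therefore define a unique $\sO_X$-linear map $h^r\colon\overline M^r\to L\dot\Omega_{X/S}^{(m),r-1}$ with the stated properties.
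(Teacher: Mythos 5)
Your proposal is correct, and its skeleton is the same as the paper's: the same distinguished set of ``reduced'' tuples, the same relations of type $(k,k)$ with $j_n\geq p^m$ used to rewrite non-reduced tuples, and an induction governed by the position of the first slot of nonzero $n$-degree. The differences are in execution. The paper does not reduce the relations modulo $p$ at this stage: it keeps the whole sum $\sum_{a}\Gamma_{a,p^m-a,j_n-p^m}\,\delta(I;\dots,p^m\one_n,\hat{J}+(a+j_n-p^m)\one_n,\dots)$ as the part of the relation outside the span $N^r$ of the reduced tuples and runs a descending induction on $j_{s+1,n}$, whereas you apply Lemma \ref{lem:b+p^mq} coordinatewise (via the factorization of $\Gamma_{A,B,C}$) right away to obtain the clean one- or two-term ``swap'' form; this is essentially the computation the paper defers to Lemma \ref{lem:omega2}, and your Step 1 claim is exactly what it gives. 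In fact your bookkeeping is slightly more careful than the paper's here: when $j_n=p^m$ the relation contains the additional non-reduced summand whose $(k+1)$-st slot is $p^m\one_n$, which the paper's stated congruence modulo $N^r$ passes over, and which your recursion ordered by decreasing $r-s$ handles explicitly. For linear independence the paper only remarks that ``different relations are used to express different $\delta$'s''; your retraction $\rho\colon M^r\to F^r$, killed on each defining relation because each such relation coincides with the one chosen for its distinguished summand, is a rigorous implementation of that triangularity remark and gives the splitting of $\varphi$. The one soft spot is that Step 1 is asserted rather than verified, but the assertion is true and follows from Lemma \ref{lem:b+p^mq} exactly as you indicate; likewise your convention that $h^r$ vanishes on tuples all of whose slots have zero $n$-degree (a case the formula in Proposition \ref{thm:homotopy}(ii) does not literally cover, but forces by its own expression since $h^1$ vanishes there) is the intended reading, without which the uniqueness claim of the lemma would not make sense.
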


\begin{proof}
By definition, the kernel of the natural surjection $M^r\to\overline{M}^r$ is generated by
\begin{equation}
    \label{eqn:kernel}
    \sum_{\substack{A+B+C=J\\ B,C\neq J}}\Gamma_{A,B,C}\,\delta(I;J_1,\dots,J_{k-1},A+B,A+C,J_{k+2},\dots,J_r),
\end{equation}
where $J_1,\dots,J_{k-1}\in\fI_{n-1}^{(m)}$, $I,J\in\bN^n$ and $J_{k+2},\dots,J_r\in\fI_n^{(m)}$
such that $p^m<|J|\leq 2p^m$ and $j_n\geq p^m$.
Let $N^r$ denote the submodule of $M^r$ generated by the sections stated in the lemma,
and $\overline{N}^r$ its image in $\overline{M}^r$.
Then, the section (\ref{eqn:kernel}) is congruent modulo $N^r$ to
\begin{align*}
    &  \sum_{\substack{a+b+c=j_n\\ a+b=p^m,\\ b,c\neq j_n}}\Gamma_{a,b,c}\,\delta(I;J_1,\dots,J_{k-1},(a+b)\one_n,\hat{J}+(a+c)\one_n,J_{k+2},\dots,J_r)\\
    & = \sum_{a=0}^{2p^m-j_n}\Gamma_{a,p^m-a,j_n-p^m}\,\delta(I;J_1,\dots,J_{k-1},p^m\one_n,\hat{J}+(a+j_n-p^m)\one_n,J_{k+2},\dots,J_r).
\end{align*}
because, for each term of (\ref{eqn:kernel}), $\hat{A}+\hat{B}$ must be zero if the $n$-th component of $A+B$ equals $p^m$.
Now, for $I\in\bN^n$ and $J_1,\dots,J_r\in\fI_n^{(m)}$ such that $j_{s(J_1,\dots,J_r),n}=p^m$ and that 
$s(J_1,\dots,J_r)<r$,
we see that the section $\delta(I;J_1,\dots,J_r)$ belongs to $\overline{N}^r$ by descending induction on $j_{s(J_1,\dots,J_r)+1,n}$.
This shows that $\overline{N}^r=\overline{M}^r$.

In fact, different relations (\ref{eqn:kernel}) are used to express different $\delta(I;J_1,\dots,J_r)$'s as a linear combination
of the generators of $\overline{N}^r$.
Therefore, these generators form a basis of $\overline{N}^r$.

The latter half follows directly from the first half.
\end{proof}

\begin{lemma}
\label{lem:omega2}
Assume that $r\geq 2$, and let $h^r\colon \bar{M}^r\to L\dot\Omega_{X/S}^{(m),r-1}$ be the $\sO_X$-linear morphism
defined in Lemma \ref{lem:generatorbarM}.
Then, for all $s<r$, for all $I\in\bN^n, J_1,\dots,J_{s-1}\in\fI_n^{(m)}$ with
$j_{1,n}=\dots=j_{s-1,n}=0$, and for all $J_{s+1},\dots,J_r\in\fI_n^{(m)}$, we have the equation
\begin{align*}
& h^r\big(\delta(I; J_1,\dots,J_{s-1},p^m\one_n,J_{s+1},\dots,J_r)\big) \\
= & -h^r\big(\delta(I; J_1,\dots,J_{s-1},J_{s+1},p^m\one_n,J_{s+2},\dots,J_r)\big).
\end{align*}
\end{lemma}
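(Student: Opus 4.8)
The plan is to deduce the identity from a single relation in $\overline{M}^r$, together with a batch of error terms that $h^r$ annihilates. First I would reduce to the case $i_n=p^mq$ for some natural number $q$: if $i_n$ is not divisible by $p^m$ then, by Proposition~\ref{thm:homotopy} and Lemma~\ref{lem:generatorbarM}, $h^r$ sends every section $\delta(I;K_1,\dots,K_r)$ to $0$ (the integer $i_n$ enters the formulas only through terms $h^1\big(\delta(i_n\one_n;\cdot)\big)$, which vanish unless $p^m\mid i_n$), so both sides of the asserted equation vanish.

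Assuming $i_n=p^mq$, I would specialise the relation~(\ref{eqn:kernel}) from the proof of Lemma~\ref{lem:generatorbarM} to the datum $J:=p^m\one_n+J_{s+1}$, outer position $k:=s$, and outer factors $J_1,\dots,J_{s-1}$ and $J_{s+2},\dots,J_r$. Since $p^m<|J|=p^m+|J_{s+1}|\le 2p^m$, since $j_n=p^m+j_{s+1,n}\ge p^m$, and since $j_{1,n}=\dots=j_{s-1,n}=0$, this is one of the defining relations of $\overline{M}^r$, so that in $\overline{M}^r$
\[
\sum_{\substack{A+B+C=J\\ B,C\neq J}}\Gamma_{A,B,C}\,\delta\big(I;J_1,\dots,J_{s-1},A+B,A+C,J_{s+2},\dots,J_r\big)=0,
\]
the summands with $|A+B|>p^m$ or $|A+C|>p^m$ being zero. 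I would then apply the $\sO_X$-linear map $h^r$ and sort the summands. The two diagonal triples $(A,B,C)=(0,p^m\one_n,J_{s+1})$ and $(0,J_{s+1},p^m\one_n)$ give precisely the two sections in the statement, and in each case $\Gamma_{A,B,C}\equiv 1\pmod p$ by Lemmas~\ref{lem:0modp} and~\ref{lem:1modp}. Every other summand should lie in $\ker h^r$, for one of two reasons: if the factor governing $h^r$ of that summand fails to be a multiple of $\one_n$, then the relevant $h^1\big(\delta(i_n\one_n;\cdot)\big)$ vanishes by Proposition~\ref{thm:homotopy}; otherwise $\Gamma_{A,B,C}$ reduces to its $n$-th component factor, to which Lemma~\ref{lem:b+p^mq} applies --- with $q=1$ and $a_n+c_n\le|A+C|\le p^m$ --- and forces $\Gamma_{A,B,C}\equiv 0\pmod p$. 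Hence $h^r$ of the relation collapses to
\[
h^r\big(\delta(I;\dots,p^m\one_n,J_{s+1},\dots)\big)+h^r\big(\delta(I;\dots,J_{s+1},p^m\one_n,\dots)\big)=0,
\]
which is the assertion.

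The delicate step is this term-by-term sorting, and its most subtle sub-case is $a_n+b_n=p^m$: then $A+B=p^m\one_n$ sits in position $s<r$, so the section $\delta(I;\dots,p^m\one_n,A+C,\dots)$ is not among the basis sections of $\overline{M}^r$ singled out in Lemma~\ref{lem:generatorbarM}, $h^r$ is not given on it by the explicit formula, and a careless argument would circularly appeal to the statement being proved. What defuses this is that in this range Lemma~\ref{lem:b+p^mq} forces $a_n=0$, so the summand is either the diagonal triple $(0,p^m\one_n,J_{s+1})$ or has $\Gamma_{A,B,C}\equiv 0$. What remains is the patient bookkeeping of which coefficients $\Gamma_{A,B,C}$ survive modulo $p$, together with the degenerate case $J_{s+1}=p^m\one_n$, where the two sections coincide and the same relation --- whose only surviving triple is now $(0,p^m\one_n,p^m\one_n)$ --- shows that their common value already vanishes in $\overline{M}^r$.
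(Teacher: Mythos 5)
Your proposal is correct and is essentially the paper's own argument: the paper likewise applies $h^r$ to the defining type-$(s,s)$ relation of $\overline{M}^r$ with $J=J_{s+1}+p^m\one_n$ (the identity (\ref{eqn:relbar})) and uses Lemma \ref{lem:b+p^mq} together with the vanishing of sections having a factor of total degree $>p^m$ to reduce it to exactly the two terms of the statement. The paper merely organizes the same bookkeeping by a case split on $j_{s+1,n}>0$ versus $j_{s+1,n}=0$ (with the degenerate case $J_{s+1}=p^m\one_n$ absorbed into the first), whereas you sort the triples $(A,B,C)$ directly; the content is identical.
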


\begin{proof}
The morphism $h^r$ sends a relation (\ref{eqn:relation}) of type $(k,k)$ to
\begin{align}
    \label{eqn:relbar}
	&h^r\big(\delta(I;J_1,\dots,J_{k-1},\hat{J},j_n\one_n,J_{k+2},\dots,J_r)\big)\nonumber\\
	+&\sum_{\substack{a+b+c=j_n\\ c\neq j_n}}\Gamma_{a,b,c}h^r\big(\delta(I;J_1,\dots,J_{k-1},(a+b)\one_n,\hat{J}+(a+c)\one_n,J_{k+2},\dots,J_r)\big)
\end{align}
if $j_n\geq p^m$ (if $\hat{J}=0$, we interprete the first term to be zero).
Under the notation in the statement of the lemma, we put $k=s$ and
\[
    J=J_{s+1}+p^m\one_n=\hat{J_{s+1}}+(j_{s+1,n}+p^m)\one_n,
\]
We prove that the lemma follows from the vanishing of (\ref{eqn:relbar}) for this $k$ and $J$.

First, assume that $j_{s+1,n}>0$.
Then, the first term of (\ref{eqn:relbar}) (for $k$ and $J$ above) equals zero.
In the second term, $\Gamma_{a,b,c}$ is zero unless $(a,b,c)=(p^m,j_n-p^m,0),(0,j_n-p^m,p^m)$ or $b\geq p^m$ by Lemma \ref{lem:b+p^mq}.
Since $\delta(I;J_1,\dots,(a+b)\one_n,\dots,J_r)$ is equal to zero if $a+b>p^m$, we conclude that
the summand is zero unless $(a,b,c)=(0,p^m,j_n-p^m)$ or $(0,j_n-p^m,p^m)$.
Hence, the section (\ref{eqn:relbar}) is equal to
\begin{eqnarray*}
    && h^r\big(\delta(I;J_1,\dots,J_{k-1},p^m\one_n,\hat{J}+(j_n-p^m)\one_n,J_{k+2},\dots,J_r)\big)\\
    &+& h^r\big(\delta(I;J_1,\dots,J_{k-1},(j_n-p^m)\one_n,\hat{J}+p^m\one_n,J_{k+2},\dots,J_r)\big).
\end{eqnarray*}
If $\hat{J}=0$, the vanishing of this section shows the lemma.
Next, if $\hat{J}\neq 0$, the second term vanishes, hence the vanishing of this section
this lemma.

Second, assume that $j_{s+1,n}=0$. In this case, $\hat J$ is necessarily non-zero.
In the each summand of the second term of (\ref{eqn:relbar}), the coefficient $\Gamma_{a,b,c}$ vanishes unless $(a,b,c)=(0,p^m,0)$, $(p^m,0,0)$
in virtue of Lemma \ref{lem:b+p^mq} (note that $a+b>0$).
If $(a,b,c)=(p^m,0,0)$, the section $\delta(I;J_1,\dots,\hat{J}+(a+c)\one_n,\dots,J_r)$ equals zero,
therefore we conclude that the vanishing of (\ref{eqn:relbar}) shows the lemma.

This completes the proof of the lemma.
\end{proof}

\begin{corollary}
\label{cor:initialh}
Let $I\in\bN^n$ and $J_1,\dots,J_r\in\fI_n^{(m)}$, and let $s'$ be a natural number such that $s'\leq s:=s(J_1,\dots,J_r)$.
Then, the morphisms $h^r\colon \overline{M}^r\to L\dot\Omega_{X/S}^{(m),r-1}$ defined in Lemma \ref{lem:generatorbarM} satisfy
\[
h^r\big(\delta(I;J_1,\dots,J_r)\big)=(-1)^{s'-1}\delta(\hat{I};J_1,\dots,J_{s'-1})\otimes h^{r-s'+1}\big(\delta(i_n\one_n;J_{s'},\dots,J_r)\big).
\]
\end{corollary}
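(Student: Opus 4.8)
The plan is to prove the identity by induction on $s'$, with $s'=1$ as the base case; each inductive step peels off one further coordinate from the front of the list. For $s'=1$ the claimed formula reads $h^r\big(\delta(I;J_1,\dots,J_r)\big)=\hat\eta^{\{\hat{I}\}}\cdot h^r\big(\delta(i_n\one_n;J_1,\dots,J_r)\big)$, the dot denoting multiplication by the degree-$0$ element $\hat\eta^{\{\hat{I}\}}=\delta(\hat{I};)$ in the graded algebra $L\dot\Omega_{X/S}^{(m),\bullet}$. Since $\eta^{\{I\}}=\hat\eta^{\{\hat{I}\}}\cdot\eta_n^{\{i_n\}}$ by definition, one has $\delta(I;J_1,\dots,J_r)=\hat\eta^{\{\hat{I}\}}\cdot\delta(i_n\one_n;J_1,\dots,J_r)$, so the $s'=1$ case is precisely the statement that $h^r$ is equivariant for left multiplication by the monomials $\hat\eta^{\{\hat{I}\}}$. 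I would read this off Proposition \ref{thm:homotopy}, where in both (i) and (ii) the factor $\hat\eta^{\{\hat{I}\}}$ (written $\delta(\hat{I};\dots)$ in (ii)) occurs merely as a passive left factor; the only point needing care is that for a non-basis section $\delta(I;J_1,\dots,J_r)$ — the case $j_{s,n}=p^m$ with $s<r$, where $s:=s(J_1,\dots,J_r)$ — one must first rewrite it in the basis of $\overline{M}^r$ of Lemma \ref{lem:generatorbarM} via Lemma \ref{lem:omega2}, which keeps $I$ fixed, so the equivariance is preserved.

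For the inductive step, assume the formula for $s'$ and that $s'+1\le s$; then $j_{1,n}=\dots=j_{s',n}=0$ and the tail $(J_{s'},\dots,J_r)$ has length $r-s'+1\ge 2$. The step reduces to the \emph{single-peel identity}
\[
h^{\ell}\big(\delta(i\one_n;K_1,\dots,K_\ell)\big)=-(\dlog t)^{K_1}\otimes h^{\ell-1}\big(\delta(i\one_n;K_2,\dots,K_\ell)\big)
\]
for $\ell\ge 2$, $i\in\bN$ and $K_1,\dots,K_\ell\in\fI_n^{(m)}$ with $k_{1,n}=0$: applying it to the factor $h^{r-s'+1}\big(\delta(i_n\one_n;J_{s'},\dots,J_r)\big)$ in the inductive hypothesis, then using $(\dlog t)^{J_{s'}}=(\dlog\hat{t})^{\hat{J}_{s'}}$ and $\delta(\hat{I};J_1,\dots,J_{s'-1})\otimes(\dlog t)^{J_{s'}}=\delta(\hat{I};J_1,\dots,J_{s'})$ (both valid since $j_{s',n}=0$) together with the sign change $(-1)^{s'-1}\cdot(-1)=(-1)^{s'}$, gives the $(s'+1)$-version. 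Since the induction runs from $s'=1$ up to $s'=s$, this proves the corollary.

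To establish the single-peel identity, set $s:=s(K_1,\dots,K_\ell)\ge 2$, so that $s(K_2,\dots,K_\ell)=s-1$. If $0<k_{s,n}<p^m$, or $k_{s,n}=p^m$ and $s=\ell$, then both sides are evaluated by the closed formula of Proposition \ref{thm:homotopy}(ii) (with part (i) supplying the innermost $h^1$), and they agree once one notes $(\dlog t)^{K_1}=(\dlog\hat{t})^{\hat{K}_1}$ and reconciles the signs $(-1)^{s-1}$ and $(-1)\cdot(-1)^{s-2}$. In the remaining case $k_{s,n}=p^m$ and $s<\ell$ (so $K_s=p^m\one_n$), I would rewrite $\delta(i\one_n;K_1,\dots,K_\ell)$ as a signed combination of basis sections of $\overline{M}^\ell$ by iterating Lemma \ref{lem:omega2}; since these moves affect only the slots in positions $\ge s$ and leave $K_1,\dots,K_{s-1}$ untouched, the same moves applied inside $h^{\ell-1}\big(\delta(i\one_n;K_2,\dots,K_\ell)\big)$ transform the right-hand side in lockstep, reducing the claim to the case already treated.

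The principal obstacle is precisely this last case. Lemma \ref{lem:omega2} effects a bubble sort that drives the $p^m\one_n$-slots towards the right with alternating signs, and one must verify that this rearrangement is compatible with the operation ``apply $h^1$ at the first slot with nonzero $n$-th component'' which is built into the definition of $h^r$, and that the signs thus produced match the factor $(-1)^{s'-1}$ of the corollary (including checking that terms in which two $p^m\one_n$-slots would become adjacent vanish, by the relations (\ref{eqn:relation})). Once this sign- and slot-bookkeeping is settled, the rest of the proof is purely formal.
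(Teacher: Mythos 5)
Your proposal is correct and uses exactly the two ingredients that the paper's (one-line) proof invokes, namely the defining formula of Proposition \ref{thm:homotopy} (ii) on the basis sections of $\overline{M}^r$ and the slot-swapping Lemma \ref{lem:omega2} to reduce the non-basis ones, so it is essentially the same approach, merely organized as an induction on $s'$ through your single-peel identity. The bookkeeping you flag as the principal obstacle does go through as you describe: the swaps never touch the slots before position $s$ (nor $I$), so they act in lockstep on both sides with matching signs, and the configurations with two adjacent $p^m\one_n$-slots vanish already in $\overline{M}^r$ by the type-$(k,k)$ relation with $J=2p^m\one_n$ (cf.\ (\ref{eqn:p^mp^m})).
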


\begin{proof}
This is a direct consequence of the condition (ii) of Proposition \ref{thm:homotopy} and of Lemma \ref{lem:omega2}.
\end{proof}

Now, in order to prove Proposition \ref{thm:homotopy}, we have to show that
the morphism $h^r\colon\overline{M}^r\to L\dot\Omega_{X/S}^{(m),r-1}$ factors through $L\dot\Omega_{X/S}^{(m),r}$.

Let us at first prove it in the case $r=2$.

\begin{lemma}
\label{lem:homotopyr=2}
The $\sO_X$-linear morphism $h^2\colon\overline{M}^2\to L\dot\Omega_{X/S}^{(m),1}$ constructed in Lemma \ref{lem:generatorbarM}
factors through $L\dot\Omega_{X/S}^{(m),2}$.
\end{lemma}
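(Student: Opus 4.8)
The plan is to verify directly that $h^2$ annihilates the kernel of the canonical surjection $\overline{M}^2\twoheadrightarrow L\dot\Omega_{X/S}^{(m),2}$; since $h^2$ was built on $\overline{M}^2$ in Lemma \ref{lem:generatorbarM}, this yields the desired factorization. By the description of that kernel given just before Lemma \ref{lem:generatorbarM}, it is generated, in degree $2$, by the elements
\[
R(I,J):=\sum_{\substack{A+B+C=J\\ B,C\neq J}}\Gamma_{A,B,C}\,\delta(I;A+B,A+C),\qquad I\in\bN^n,\quad p^m<|J|\leq 2p^m,
\]
the summands with $|A+B|>p^m$ or $|A+C|>p^m$ being already zero in $\overline{M}^2$. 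So the task is to show $h^2\big(R(I,J)\big)=0$ for all such $I$ and $J$.

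First I would strip off the part of $I$ transverse to $t_n$: applying Corollary \ref{cor:initialh} with $s'=1$ to each summand gives $h^2\big(\delta(I;A+B,A+C)\big)=\hat\eta^{\{\hat I\}}\,h^2\big(\delta(i_n\one_n;A+B,A+C)\big)$, hence $h^2\big(R(I,J)\big)=\hat\eta^{\{\hat I\}}\,h^2\big(R(i_n\one_n,J)\big)$, and we may assume $I=i_n\one_n$. Then I split into cases. If $i_n$ is not divisible by $p^m$, every value of $h^1$ produced by Proposition \ref{thm:homotopy}(ii) is zero by Proposition \ref{thm:homotopy}(i), so $h^2\big(R(i_n\one_n,J)\big)=0$. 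If $j_n=0$, then in each summand $A$, $B$, $C$ have vanishing $n$-th component, hence so do $A+B$ and $A+C$, and $h^2$ vanishes on each such $\delta(i_n\one_n;A+B,A+C)$; again $h^2\big(R(i_n\one_n,J)\big)=0$.

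There remains the substantive case $i_n=p^mq$ with $0<j_n\leq 2p^m$. Here I would compute $h^2$ of each surviving summand from Proposition \ref{thm:homotopy}(ii) and the explicit formula for $h^1$ in Proposition \ref{thm:homotopy}(i): a summand $\delta(i_n\one_n;A+B,A+C)$ can contribute only in a few explicitly determined configurations of $(A,B,C)$ (in most of which $\widehat{A+B}=0$ or $\widehat{A+C}=0$, so that the relevant $h^1$ does not vanish), which confines the index to a one-parameter family; moreover, when $j_n\geq p^m$, some of these summands are of the form $\delta(i_n\one_n;p^m\one_n,A+C)$, which is not a basis vector and must first be moved past the next argument by Lemma \ref{lem:omega2}. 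The coefficients $\Gamma_{A,B,C}$ are then reduced modulo $p$ (this is where $p\sO_X=0$ enters) by Lemma \ref{lem:b+p^mq}, itself relying on Lemmas \ref{lem:0modp} and \ref{lem:1modp}. After this reduction the surviving contributions group into two families differing only in which of the two $\sP_{X/S}^{(m)}$-module structures on $L\dot\Omega_{X/S}^{(m),1}$ is used; transporting one family into the other by the transposition relation (\ref{eqn:trans}), and using that the alternating sum $\sum_{u=q}^{\sigma(q)-1}(-1)^{u-q}\frac{u!}{q!}$ built into $h^1$ is tailored precisely so that the resulting combination of $m$-PD binomial coefficients telescopes, one obtains $h^2\big(R(i_n\one_n,J)\big)=0$, completing the proof.

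I expect the main obstacle to be exactly this last computation: enumerating the few surviving summands — in particular handling correctly the non-basis summands $\delta(\,\cdot\,;p^m\one_n,\,\cdot\,)$ through Lemma \ref{lem:omega2} and keeping track of the twisted right $\sP_{X/S}^{(m)}$-action via (\ref{eqn:trans}) — and then checking that the signed sum of the $\Gamma_{A,B,C}$ and of the factorials $u!/q!$ cancels modulo $p$.
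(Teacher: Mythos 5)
Your reduction steps do match the paper's: the kernel of $\overline{M}^2\to L\dot\Omega_{X/S}^{(m),2}$ is killed on its generators, the factor $\hat\eta^{\{\hat I\}}$ is stripped off, the cases $p^m\nmid i_n$ and $j_n=0$ are dismissed for the same reasons as in the paper, and the remaining case is to be handled by an explicit computation using Lemma \ref{lem:b+p^mq} and the transposition relation (\ref{eqn:trans}). But that remaining computation is the entire content of the lemma, and you only assert its outcome while misidentifying the mechanism, so there is a genuine gap at the decisive step.

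Concretely: first, the relations with $j_n\geq p^m$ require no verification at all, because for $r=2$ they are exactly the sections divided out in passing from $M^2$ to $\overline{M}^2$, hence are already zero in the source of $h^2$; consequently Lemma \ref{lem:omega2} and the non-basis summands $\delta(\,\cdot\,;p^m\one_n,\,\cdot\,)$ never enter this proof. Second, in the relations that do need checking one has $0<j_n<p^m$, so every value of $h^1$ that occurs falls under the first branch of Proposition \ref{thm:homotopy}(i), namely $h^1\big(\delta(p^mq\one_n;j\one_n)\big)=\eta_n^{\{p^mq+j\}}$ with $0<j<p^m$; the alternating sum $\sum_{u=q}^{\sigma(q)-1}(-1)^{u-q}u!/q!$ never appears, and there is no telescoping of it to invoke. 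The actual cancellation, which your outline stops short of, runs as follows: the unique surviving $s=2$ summand gives $-\delta(\hat I;\hat J)\otimes\eta_n^{\{i_n+j_n\}}$; rewriting it by (\ref{eqn:trans}) yields $-\hat\eta^{\{\hat I\}}\eta_n^{\{i_n+j_n\}}(\dlog\hat{t})^{\hat J}$ together with the expansion of $-\hat\eta^{\{\hat I\}}d^0\big(\eta_n^{\{i_n+j_n\}}\big)(\dlog\hat{t})^{\hat J}$; in that expansion the terms with $a+c\geq j_n$ vanish because $|\hat J|+a+c>p^m$ (this is where $|J|>p^m$ is used), and the coefficients of the remaining terms, which have $a+b+c=i_n+j_n$ and $b\geq i_n=p^mq$, reduce modulo $p$ to $\Gamma_{a,b-p^mq,c}$ by Lemma \ref{lem:b+p^mq}(iii); the result is exactly the negative of the sum of the $s=1$ contributions $\Gamma_{a,b,c}\,\hat\eta^{\{\hat I\}}\eta_n^{\{i_n+a+b\}}(\dlog\hat{t})^{\hat J}(\dlog t_n)^{a+c}$ over $a+b+c=j_n$, $c\neq j_n$. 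Without carrying out this verification (and with the cancellation attributed to the wrong source), the proposal does not yet prove the lemma.
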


\begin{proof}
We have to show that the section
\[
\sum_{\substack{A+B+C=J\\ B,C\neq J}}h^2\big(\delta(I; A+B, A+C)\big)
\]
is zero when $p^m<|J|\leq 2p^m$ and $0\leq j_n<p^m$.
In case $j_n=0$, this is obvious by the definition of $h^2$, therefore let us assume that $0<j_n<p^m$.
Moreover, we may assume that $p^m|i_n$ because otherwise each term is zero
by the first condition of Proposition \ref{thm:homotopy} (i).

Then, the section above is equal to
\[
h^2\big(\delta(I;\hat{J},j_n\one_n)\big)+
\sum_{\substack{a+b+c=j_n\\ c\neq j_n}}\Gamma_{a,b,c}\,h^2\big(\delta(I;(a+b)\one_n,\hat{J}+(a+c)\one_n)\big).
\]
The first term is equal to
\begin{eqnarray*}
-\delta(\hat{I};\hat{J})\otimes \eta_n^{\{i_n+j_n\}}&=&-\hat\eta^{\{\hat{I}\}}\eta_n^{\{i_n+j_n\}}(\dlog\hat{t})^{\hat{J}}
- \hat\eta^{\{\hat{I}\}}d^0\left(\eta_n^{\{i_n+j_n\}}\right)(\dlog\hat{t})^{\hat{J}}\\
&=& -\sum_{a+b+c=i_n+j_n} \Gamma_{a,b,c}\,\hat\eta^{\{\hat{I}\}}\eta_n^{\{a+b\}}(\dlog \hat{t})^{\hat{J}}(\dlog t_n)^{a+c}
\end{eqnarray*}
In this sum, only the terms for $b\geq i_n$ and $c\neq j_n$ appear in virtue of Lemma \ref{lem:b+p^mq}
and of the fact that $(\dlog\hat{t})^{\hat{J}}(\dlog t_n)^{a+c}=0$ if $a+c\geq j_n$.
In turn, the second term is
\[
\sum_{\substack{a+b+c=j_n\\ c\neq j_n}}\Gamma_{a,b,c}\hat\eta^{\{\hat I\}}\eta_n^{\{i_n+a+b\}}(\dlog\hat{t})^{\hat{J}}(\dlog t_n)^{a+c}.
\]
Therefore, by Lemma \ref{lem:b+p^mq}, the sum of these two terms is reduced to zero.
\end{proof}

Now, let us finish the proof of Proposition \ref{thm:homotopy}.

We have to prove that the morphism $h^r\colon\overline{M}^r\to L\dot\Omega_{X/S}^{(m),r-1}$ sends all relations of type $(s,k)$ to zero.

If $s>k$, this is obvious by Corollary \ref{cor:initialh}.
If $s=k$, we may assume that $0<j_n<p^m$ by the definition of $\overline{M}^r$.
In this case, the image by $h^r$ of $\delta(I; J_1,\dots,J_{k-1},A+B,A+C,J_{k+2},\dots,J_r)$ is by definition the following section:
\[
(-1)^{k-1}\delta(\hat{I};J_1,\dots,J_{k-1})\otimes h^2\big(\delta(i_n\one_n; A+B,A+C)\big)\delta(0;J_{k+2},\dots,J_r).
\]
Therefore the proof is reduced to the case $r=2$, which is done in Lemma \ref{lem:homotopyr=2}.
Even when $s<k$, this equation is directly proved by using the definition if $0<j_{s,n}<p^m$.

Therefore, we may assume that $s<k$ and $J_s=p^m\one_n$.
Moreover, Corollary \ref{cor:initialh} allows us to assume that $s=k-1$.
Indeed, when $s<k-1$, consider the section
\[
h^r\big(\delta(I;J_1,\dots,J_{k-1})\otimes d^1\big(\eta^{\{J\}}\big)\otimes \delta(0;J_{k+2},\dots,J_r)\big),
\]
where $J_s=p^m\one_n$ (this is the image by $h^r$ of the relation of type $(s,k)$).
If $J_{s+1}=p^m\one_n$, this is equal to zero because
\begin{equation}
    \label{eqn:p^mp^m}
    (\dlog t_n)^{p^m}\otimes (\dlog t_n)^{p^m} = \tqbinom{2p^m}{p^m}^{-1}d\big(\eta_n^{\{2p^m\}}\big) = 0.
\end{equation}
By Lemma \ref{lem:omega2}, if $j_{s+1,n}=0$, then this equals a relation of type $(s+1,k)$,
and if $0<j_{s+1,n}<p^m$, then by Lemma \ref{lem:omega2} this equals a relation which we have already treated
(that is, a relation of type $(s,k)$ such that the $n$-th component of $s$-th multi-index is less than $p^m$).
By repeating this argument, the proof is reduced to the case $s=k-1$.
Now, Corollary \ref{cor:initialh} again allows us to assume that $s=k-1=2$.

We need to show that
\[
h^r\Big(\delta(I;p^m\one_n)\otimes d\left( (\dlog t)^{J}\right)\otimes\delta(0;J_3,\dots,J_r)\Big)=0
\]
when $p^m<|J|\leq 2p^m$. Put $K:=J+p^m\one_n$ and notice that
\begin{align*}
& \sum_{\substack{A+B+C=K\\ B,C\neq K}}\Gamma_{A,B,C}\,d\left( (\dlog t)^{A+B}\right)\otimes(\dlog t)^{A+C}\\
- & \sum_{\substack{A+B+C=K\\ B,C\neq K}}\Gamma_{A,B,C}\,(\dlog t)^{A+B}\otimes d\left( (\dlog t)^{A+C}\right)
\end{align*}
is equal to zero because this is nothing other than $(d\circ d)\left( (\dlog t)^{\{K\}}\right)$.

Now, the section
\[
\sum_{\substack{A+B+C=K\\ B,C\neq K}}\Gamma_{A,B,C}h^r\Big(\eta^{\{I\}}d\left( (\dlog t)^{A+B}\right)\otimes\delta(0;A+C,J_3,\dots,J_r)\Big)
\]
is reduced to zero. In fact, each term is obviously zero when $|A+C|>p^m$.
If it is not the case, then $|A+B|>p^m$, and the term is zero because this is the image of a relation of type $(s,1)$ with $s\geq 1$.
Therefore, combining it with the previous notice, we see that
\[
\sum_{\substack{A+B+C=K\\ B,C\neq K}}\Gamma_{A,B,C}h^r\Big(\delta(I; A+B)\otimes d\left( (\dlog t)^{A+C}\right)\otimes\delta(0;J_3,\dots,J_r)\Big)=0.
\]
The left hand side is the sum of the two sections
\[
\sum_{\substack{A+B+C=\hat K\\ C\neq\hat K}}\Gamma_{A,B,C}h^r\left(\delta(I; A+B)\otimes d\big( (\dlog\hat{t})^{A+C}(\dlog t_n)^{j_n+p^m}\big)
\otimes\delta(0;J_3,\dots,J_r)\right)
\]
and
\[
\sum_{\substack{a+b+c=j_n+p^m\\ c\neq j_n+p^m}}\Gamma_{a,b,c}h^r\left(\delta(I; (a+b)\one_n)\otimes d\big( (\dlog\hat{t})^{\hat{K}}(\dlog t_n)^{a+c}\big)\otimes\delta(0;J_3,\dots,J_r)\right),
\]
because the other terms vanish by the definition of $h^r$.

Now, each term of the first section is zero because this is the image of a relation of type $(2,2)$; therefore the second section is itself equal to zero.
Also, all terms of the second section reduce to zero except ones for $a+b=p^m$.
In fact, if $a+b<p^m$, then since $|\hat{K}+(a+c)\one_n|>|J|$, the term vanishes by the known case.
Therefore we see that
\[
\sum_{d=j_n}^{j_n+p^m}\Gamma_{d-j_n,p^m-d+j_n,j_n}\,h^r\left(\delta(I;p^m\one_n)\otimes d\big((\dlog\hat{t})^{\hat{J}}(\dlog t_n)^d\big)\otimes\delta(0;J_3,\dots,J_r)\right)
\]
is equal to zero.
In this sum, only the terms for $d\leq 2p^m$ appear.
Therefore, if $j_n=2p^m$, this is just what we want.
General case follows by descending induction on $j_n$.

This completes the proof of Proposition \ref{thm:homotopy}.

\section{Proof of Proposition \ref{thm:formalLSQ}.}
\label{ss:logPL}

Now, let us prove that the family of morphisms $\{h^r\}_r$ defined in the previous section
indeed satisfies the condition in Proposition \ref{thm:formalLSQ},
namely, that it is a homotopy connecting $\id_{L\dot\Omega_{X/S}^{(m),\bullet}}$ and $\pi_n$.
We first prove it in some special cases.

\begin{lemma}
\label{lem:r=0}
The morphism $h^1$ satisfies the following equation:
\[
(h^1\circ d^0)\left(\eta^{\{I\}}\right)=
\begin{cases}
\eta^{\{I\}} & \mathrm{if}~i_n>0,\\
0 & \mathrm{if}~i_n=0.
\end{cases}
\]
\end{lemma}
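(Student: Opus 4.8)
The plan is to compute $(h^1\circ d^0)(\eta^{\{I\}})$ directly from the explicit formula
\[
d^0\big(\eta^{\{I\}}\big)=\sum_{\substack{A+B+C=I\\ B\neq I}}\Gamma_{A,B,C}\,\delta(A+B;A+C)
\]
of Section~\ref{ss:logjet} and the defining formulas for $h^1$ in Proposition~\ref{thm:homotopy}(i), and to check that modulo $p$ the result is $\eta^{\{I\}}$ when $i_n>0$ and $0$ when $i_n=0$.

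First I would reduce everything to the $n$-th coordinate. A summand $\Gamma_{A,B,C}\,\delta(A+B;A+C)$ is killed by $h^1$ unless $\widehat{A+C}=0$ — which forces $\hat A=\hat C=0$, hence $\widehat{A+B}=\hat I$ — and $(A+B)_n$ is divisible by $p^m$. For such a summand $\Gamma_{A,B,C}$ coincides with its $n$-th coordinate factor, the other factors being $\Gamma_{0,i_l,0}=1$, and $h^1$ applied to it produces $\hat\eta^{\{\hat I\}}$ times a section in $\eta_n$ alone. Hence $(h^1\circ d^0)(\eta^{\{I\}})=\hat\eta^{\{\hat I\}}\cdot v$ where $v:=(h^1\circ d^0)(\eta_n^{\{i_n\}})$ is the corresponding one–variable expression. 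If $i_n=0$ every summand has $(A+C)_n=0$, so $v=0$ and we are done; thus I may assume $i:=i_n>0$ and write $i=p^mq+k$ with $0\le k<p^m$.

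Now, writing $a,b,c$ for the $n$-th components (so $a+b+c=i$), the summand $\Gamma_{a,b,c}\,\delta\big((a+b)\one_n;(a+c)\one_n\big)$ survives $h^1$ only when $a+b\equiv 0\pmod{p^m}$, equivalently $c\equiv k\pmod{p^m}$, and $0<a+c\le p^m$ — the latter needed both for the $\delta$ to be nonzero and for $h^1$ not to annihilate it. Thus $c=k$ (``type A'') or, when $k=0$, also $(a,b,c)=(0,p^m(q-1),p^m)$ (``type B''). Lemma~\ref{lem:b+p^mq}, applicable since $a+c\le p^m$, then kills almost all of these modulo $p$: among type A only $a=0$ (which needs $k>0$), where $\Gamma_{0,p^mq,k}\equiv\Gamma_{0,0,k}=1$ by case (iii) together with a direct evaluation, and $a=p^m$ (which needs $k=0$), where $\Gamma_{p^m,p^m(q-1),0}\equiv q$ by case (i); and the type B coefficient is $\equiv 1$ by case (ii). Feeding these into the formula for $h^1$ we obtain $v=\eta_n^{\{i\}}$ when $k>0$ (the only surviving term lands in the ``$0<j_n<p^m$'' branch of $h^1$), so the lemma holds in that case; and $v=q\,S_q+S_{q-1}$ when $k=0$ (hence $q\ge 1$), where $S_{q'}:=\sum_{u=q'}^{\sigma(q')-1}(-1)^{u-q'}\tfrac{u!}{q'!}\,\eta_n^{\{p^m(u+1)\}}$ is the section produced by the ``$j_n=p^m$'' branch.

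Finally I would carry out the cancellation in the case $k=0$. The $u=q-1$ term of $S_{q-1}$ equals $\eta_n^{\{p^mq\}}=\eta_n^{\{i\}}$; for the remaining range $u\ge q$, using $\tfrac{u!}{(q-1)!}=q\,\tfrac{u!}{q!}$, the leftover is
\[
q\Big(\sum_{u=q}^{\sigma(q)-1}-\sum_{u=q}^{\sigma(q-1)-1}\Big)(-1)^{u-q}\tfrac{u!}{q!}\,\eta_n^{\{p^m(u+1)\}},
\]
which vanishes modulo $p$: if $q$ is prime to $p$ then $\sigma(q-1)=\sigma(q)$ and the bracketed difference is empty, while if $p$ divides $q$ the prefactor $q$ is $\equiv 0\pmod p$. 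Hence $v\equiv\eta_n^{\{i\}}$, so $(h^1\circ d^0)(\eta^{\{I\}})\equiv\hat\eta^{\{\hat I\}}\eta_n^{\{i_n\}}=\eta^{\{I\}}$. I expect this last step to be the crux: it is the only place where the specific choice of $\sigma$ in the definition of $h^1$ is used, and it is exactly the mechanism by which the alternating tails produced by the ``$j_n=p^m$'' branch of $h^1$ annihilate one another. The combinatorial bookkeeping in the preceding paragraph is routine but demands some care with the degenerate values $q=0$ and $q=1$.
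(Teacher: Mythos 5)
Your proof is correct and takes essentially the same route as the paper's: reduce to the $n$-th coordinate using the shape of $h^1$, isolate the surviving summands via Lemma \ref{lem:b+p^mq} (together with Lemma \ref{lem:1modp}), and finish with the cancellation of the alternating factorial sums from the $j_n=p^m$ branch, split according to whether $p$ divides the relevant quotient. The only differences are cosmetic: you write $i_n=p^mq+k$ with $0\le k<p^m$ while the paper uses $0<r\le p^m$, so your final cancellation appears as $q\,S_q+S_{q-1}$ instead of the paper's $S_q+(q+1)S_{q+1}$, and you route the Case-A coefficient vanishing uniformly through Lemma \ref{lem:b+p^mq} where the paper invokes Lemma \ref{lem:1modp} plus the vanishing of $\binom{p^mq}{a}$ modulo $p$.
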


\begin{proof}
By definition, we have
\begin{eqnarray*}
(h^1\circ d^0)\left(\eta^{\{I\}}\right) &=&
\sum_{\substack{A+B+C=I\\ B\neq I}}\Gamma_{A,B,C}\,h^1\big(\delta(A+B; A+C)\big)\\
&=& \sum_{\substack{a+b+c=i_n\\ b\neq i_n}}\Gamma_{a,b,c}\,h^1
\left(\hat\eta^{\{\hat{I}\}}\delta_n(a+b; a+c)\right)\\
&=& \sum_{\substack{a+b+c=i_n\\ b\neq i_n}}\Gamma_{a,b,c}\,\hat\eta^{\{\hat{I}\}}
h^1\big(\delta_n(a+b; a+c)\big).
\end{eqnarray*}
Therefore, it suffices to prove the proposition for $n=1$.

Since $d^0(1)=0$ is obvious, it is sufficient to prove that
\begin{equation}
\label{eqn:d1}
\sum_{\substack{a+b+c=p^mq+r\\ b\neq p^mq+r}}\Gamma_{a,b,c}\,h^1\big(\delta(a+b; a+c)\big)=\eta^{\{p^mq+r\}}
\end{equation}
for all natural number $q$ and $0<r\leq p^m$.
The proof is divided into three cases.

{\em Case 1:} $0<r<p^m$.

The summand is zero unless $a+b=p^mq$ and $c=r$ because in that case $h^1\big(\delta(a+b;a+c)\big)$ is zero.
If $c=r$, Lemma \ref{lem:1modp} shows that
\[
\Gamma_{a,b,c}=\qbinom{p^mq+r}{r}\binom{p^mq}{a}=\binom{p^mq}{a}.
\]
Consequently, the only remaining term is the term for $(a,b,c)=(0,p^mq,r)$, which equals
$h^1\big(\delta(p^mq; r)\big)=\eta^{\{p^mq+r\}}$.

{\em Case 2:} $r=p^m$ and $q+1$ is divisible by $p$.

In this case, by Lemma \ref{lem:b+p^mq}, each term vanishes unless $(a,b,c)=(0,p^mq,p^m)$ or $(p^m,p^mq,0)$.
In the latter case, $\Gamma_{a,b,c}=q+1=0$ by assumption, hence the only remaining term is
that  for $(a,b,c)=(0,p^mq,p^m)$,
which equals $h^1\big(\delta(p^mq; p^m)\big) = \eta^{\{p^mq+p^m\}}$ because $\sigma(q)=q+1$.

{\em Case 3:} $r=p^m$ and $q+1$ is not divisible by $p$.

In this case, we have two remaining terms as in Case 2, that is, the terms for $(a,b,c)=(0,p^mq,p^m)$ and $(p^m,p^mq,0)$.
Since $\sigma(q)=\sigma(q+1)$ holds,
the left hand side of (\ref{eqn:d1}) is
\begin{eqnarray*}
&&h^1\big(\delta(p^mq; p^m)\big)+(q+1)h^1\big(\delta(p^mq+p^m; p^m)\big)\\
&=& \eta^{\{p^mq+p^m\}}+\sum_{u=q+1}^{\sigma(q)-1}(-1)^{u-q}\left(\frac{\,u!\,}{q!}-\frac{u!}{(q+1)!}(q+1)\right)\eta^{\{p^m(u+1)\}}=\eta^{\{p^mq+p^m\}}.
\end{eqnarray*}
This shows the equation (\ref{eqn:d1}).
\end{proof}

\begin{lemma}
\label{lem:r=2}
Assume that $n=1$. Then, the equation
\[
(h^2\circ d^1+d^0\circ h^1)\big(\delta(i;k)\big) = \delta(i;k)
\]
holds for all $i\in\bN$ and $0<k\leq p^m$.
\end{lemma}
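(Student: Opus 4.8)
The proof is a direct but intricate computation with the explicit formulas of the preceding sections. Since $n=1$, every index is an ordinary natural number, $\delta(i;k)=\eta_1^{\{i\}}(\dlog t_1)^k$, and every map in sight is explicit; the plan is to compute $h^2d^1(\delta(i;k))$ and $d^0h^1(\delta(i;k))$ separately and add them. For the first, I would expand $d^1(\delta(i;k))$ by the Leibniz rule, using the formulas for $d^0$ and $d^1$ at the end of Section \ref{ss:logjet}, into
\[
d^1(\delta(i;k))=\sum_{\substack{a+b+c=i\\ b\neq i}}\Gamma_{a,b,c}\,\delta(a+b;a+c,k)\;-\;\sum_{\substack{a+b+c=k\\ b,c\neq k}}\Gamma_{a,b,c}\,\delta(i;a+b,a+c),
\]
and then apply $h^2$. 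In every term the first tensor slot carries a nonzero power of $t_1$, so $s(\,\cdot\,)=1<2$; hence Proposition \ref{thm:homotopy}(ii) gives $h^2\big(\delta(e;f,g)\big)=h^1\big(\delta(e;f)\big)\cdot(\dlog t_1)^g$ whenever $0<f<p^m$, while a slot equal to $p^m$ is first moved to the right by Lemma \ref{lem:omega2} (picking up a sign) and $\delta(e;p^m,p^m)=0$ by (\ref{eqn:p^mp^m}). Since $h^1\big(\delta(e;f)\big)$ vanishes unless $p^m\mid e$, only triples with a prescribed $p^m$-divisibility survive; for those, the identity (\ref{eq:gammaabc}) from the proof of Lemma \ref{lem:b+p^mq}, together with Lemma \ref{lem:1modp}, reduces $\Gamma_{a,b,c}$ modulo $p$ to an ordinary binomial $\binom{p^mq'}{a}$, which is zero unless $a=0$. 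Thus $h^2d^1(\delta(i;k))$ collapses to a handful of terms $\delta(\,\cdot\,;k)$ (or $\delta(\,\cdot\,;p^m)$).

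Next I would compute $d^0h^1(\delta(i;k))$ directly from the definition of $h^1$ in Proposition \ref{thm:homotopy}(i), distinguishing three cases: (a) $p^m\nmid i$, where $h^1(\delta(i;k))=0$ and there is nothing to add; (b) $i=p^mq$ with $0<k<p^m$, where $h^1(\delta(i;k))=\eta^{\{i+k\}}$; and (c) $i=p^mq$ with $k=p^m$, where $h^1(\delta(i;k))=\sum_{u=q}^{\sigma(q)-1}(-1)^{u-q}\frac{u!}{q!}\,\eta^{\{p^m(u+1)\}}$. Applying $d^0$ and simplifying with Lemmas \ref{lem:0modp}, \ref{lem:1modp} and \ref{lem:b+p^mq} — exactly as in the proof of Lemma \ref{lem:r=0} — discards all but a few terms, and adding the two contributions, a short telescoping (over $a$ in cases (a) and (b); over both $a$ and $u$, using $\sigma(q)=q+1$ when $p\mid q+1$ and $\sigma(q)=\sigma(q+1)$ otherwise, in case (c)) leaves precisely $\Gamma_{0,0,k}\,\delta(i;k)=\delta(i;k)$. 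A useful warm-up is $i=0$, $0<k<p^m$: there $h^2d^1(\delta(0;k))=-\sum_{\substack{a+b+c=k\\ b,c\neq k}}\Gamma_{a,b,c}\,\delta(a+b;a+c)$ and $d^0h^1(\delta(0;k))=\sum_{\substack{a+b+c=k\\ b\neq k}}\Gamma_{a,b,c}\,\delta(a+b;a+c)$, whose difference is just the $(a,b,c)=(0,0,k)$ term.

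I expect case (c), $k=p^m$, to be the main obstacle: there $h^1(\delta(i;k))$ is an alternating sum with factorial coefficients, $d^0$ turns it into a double sum, and on the $h^2d^1$ side several terms have $a+b=p^m$ or $a+c=p^m$ and must be rewritten via Lemma \ref{lem:omega2} before any cancellation occurs. Pairing the coefficients $u!/q!$ against the surviving $\Gamma_{a,b,c}$'s and running the telescoping is the delicate bookkeeping, but it is a finite computation of exactly the same shape as Case 3 in the proof of Lemma \ref{lem:r=0}, which it closely mirrors.
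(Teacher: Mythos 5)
Your plan is correct and follows essentially the same route as the paper: a case-by-case computation splitting on whether $p^m\mid i$ and whether $k<p^m$ or $k=p^m$, expanding $d^1$ by the Leibniz rule, evaluating $h^2$ via Proposition \ref{thm:homotopy}(ii) together with Lemma \ref{lem:omega2} and (\ref{eqn:p^mp^m}), killing coefficients with Lemmas \ref{lem:0modp}--\ref{lem:b+p^mq}, and finishing with a telescoping as in Lemma \ref{lem:r=0}. The only remark worth making is that your case (c) is easier than you anticipate: for $i=p^mq$ and $k=p^m$ one finds $d^1\big(\delta(p^mq;p^m)\big)=0$ outright, so the identity there comes from the telescoping of $d^0\circ h^1$ alone.
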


\begin{proof}
Let us write $i=p^mq+r$ with $0\leq r<p^m$.
The proof is divided into several cases depending on $q$, $r$ and $k$.

{\em Case 1:} $0<r<p^m$ ($q, k$ are arbitrary).

$h^1\big(\delta(i;k)\big)=0$ by the first half of (i) of Proposition \ref{thm:homotopy}.
In turn, $(h^2\circ d^1)\big(\delta(i; k)\big)$ is equal to
\[
h^2\left(d^0\big(\eta^{\{i\}}\big)\otimes(\dlog t)^k\right)+h^2\left(\eta^{\{i\}}d^1\big( (\dlog t)^k\big)\right),
\]
and the second term vanishes by the same reason.
Now, as in the calculation in Case 1 of the proof of Lemma \ref{lem:r=0},
we can show that the first term equals
\[
h^2\left(\delta(p^mq;r,k)\right) = \delta(p^mq+r;k).
\]

{\em Case 2:} $r=0$, $q=0$ and $0<k<p^m$.

In this case,
\begin{eqnarray*}
(h^2\circ d^1)\big( \delta(0;k)\big)&=&-\sum_{\substack{a+b+c=k\\ b,c\neq k}}\frac{k!}{a!\,b!\,c!}h^2\big( \delta(0;a+b,a+c)\big)\\
&=& -\sum_{\substack{a+b+c=k\\ b,c\neq k}}\frac{k!}{a!\,b!\,c!}\delta(a+b;a+c)
\end{eqnarray*}
and
\[
	(d^0\circ h^1)\big( \delta(0;k)\big)= d^0\big( \eta^{\{k\}}\big)
 = \sum_{\substack{a+b+c=k\\b\neq k}}\frac{k!}{a!\,b!\,c!}\delta(a+b;a+c).
\]
This shows the equation.

{\em Case 3:} $r=0$, $q\geq 1$ and $0<k<p^m$.

In this case,
\[
    (d^0\circ h^1)\left(\delta(p^mq;k)\right)=d^0\left(\eta^{\{p^mq+k\}}\right)=\sum_{\substack{a+b+c=p^mq+k\\b\neq p^mq+k}}\Gamma_{a,b,c}\delta(a+b;a+c),
\]
which is by Lemma \ref{lem:b+p^mq} equal to
\[
\sum_{\substack{a+b+c=k\\b\neq k}}\Gamma_{a,b,c}\,\delta(p^mq+a+b;a+c) + q\delta(p^mq;p^m) + \delta(p^m(q-1);p^m).
\]
Next, we have
\begin{eqnarray*}
(h^2\circ d^1)\left(\delta(p^mq;k)\right)
&=& \sum_{\substack{a+b+c=p^mq\\ b\neq p^mq}}\Gamma_{a,b,c}h^2\big(\delta(a+b; a+c,k)\big)\\
&& -\sum_{\substack{a+b+c=k\\ b,c\neq k}}\Gamma_{a,b,c}h^2\big(\delta(p^mq; a+b, a+c)\big).
\end{eqnarray*}
The second term is equal to
\[
-\sum_{\substack{a+b+c=k\\ b,c\neq k}}\Gamma_{a,b,c}\eta^{\{p^mq+a+b\}}(\dlog t)^{a+c}.
\]
The first term is, again by Lemma \ref{lem:b+p^mq}, equal to
\begin{eqnarray*}
&&qh^2\big(\delta(p^mq; p^m,k)\big) + h^2\big(\delta(p^m(q-1); p^m, k)\big)\\
&=& -q\delta(p^mq+k; p^m)-\delta(p^m(q-1)+k; p^m).
\end{eqnarray*}
This shows the assertion.

{\em Case 4:} $r=0$, $q\geq 1$ and $k=p^m$.

First, we show that $d^1\big(\delta(p^mq; p^m)\big)$ is equal to zero.
In fact, this is equal to
\[
d^1\big(\delta(p^mq; p^m)\big)=d^0\big(\eta^{\{p^mq\}}\big)\otimes(\dlog t)^{p^m}+\eta^{\{p^mq\}}d^1\big( (\dlog t)^{p^m}\big),
\]
and we show that these two terms are zero themselves.
The first term is
\[
\sum_{\substack{a+b+c=p^mq\\ b\neq p^mq}}\Gamma_{a,b,c}\,\delta(a+b; a+c, p^m).
\]
Here, $\Gamma_{a,b,c}$ vanishes unless $(a,b,c)=(p^m,p^mq-p^m,0)$ or $(0,p^mq-p^m,p^m)$ by Lemma \ref{lem:b+p^mq},
but in both cases, $\delta(a+b; a+c,p^m)$ is equal to zero by (\ref{eqn:p^mp^m}).
The second term is
\[
-\sum_{\substack{a+b+c=p^m\\ b,c\neq p^m}}\Gamma_{a,b,c}\,\delta(p^mq;a+b,a+c).
\]
The coefficient $\Gamma_{a,b,c}$ vanishes unless $(a,b,c)=(p^m,0,0)$, but then $\delta(p^mq;a+b,a+c)$ vanishes again by (\ref{eqn:p^mp^m}).
This shows the assertion stated above.

In turn, $(d^0\circ h^1)\big(\delta(p^mq;p^m)\big)$ equals
\begin{eqnarray*}
&& \sum_{u=q}^{\sigma(q)-1}(-1)^{u-q}\frac{\,u!\,}{q!}d^0\big(\eta^{\{p^m(u+1)\}}\big)\\
&=&\sum_{u=q}^{\sigma(q)-1}(-1)^{u-q}\frac{\,u!\,}{q!}
\Big\{\delta(p^mu; p^m)+(u+1)\delta(p^m(u+1); p^m)\Big\}\\
&=& \delta(p^mq; p^m).
\end{eqnarray*}

This completes the proof of the lemma.
\end{proof}

\begin{lemma}
\label{lem:finalspecial}
The following equation holds for an arbitrary $n$:
\[
(h^2\circ d^1+d^0\circ h^1)\left(\eta_n^{\{i\}}(\dlog t)^J\right) =
\begin{cases}
0 & \mathrm{if}~i=j_n=0,\\
\eta_n^{\{i\}}(\dlog t)^J & \mathrm{otherwise}.
\end{cases}
\]
\end{lemma}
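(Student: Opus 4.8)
The plan is to bootstrap from the two special cases already established — Lemma~\ref{lem:r=0}, which computes $h^1\circ d^0$, and Lemma~\ref{lem:r=2}, which is exactly the present identity in the case $n=1$ — exploiting that both $h^1$ and $h^2$ only involve the last coordinate. Write $J=\hat J+j_n\one_n$ and
$\omega=\eta_n^{\{i\}}(\dlog t)^J=\eta_n^{\{i\}}(\dlog\hat t)^{\hat J}(\dlog t_n)^{j_n}$; since $\omega$ has degree $1$ we have $|J|\ge 1$, so the case ``$i=j_n=0$'' forces $\hat J\ne 0$. I would split the argument according to whether $\hat J$ vanishes.

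If $\hat J=0$, then $\omega=\eta_n^{\{i\}}(\dlog t_n)^{j_n}$ lies in the graded subalgebra of $L\dot\Omega_{X/S}^{(m),\bullet}$ generated by $\eta_n$ and $\dlog t_n$, and the explicit formulas for $d^0,d^1$ (Section~\ref{ss:logjet}) and for $h^1,h^2$ (Proposition~\ref{thm:homotopy}) show that each of $d^1,h^2,h^1,d^0$ preserves this subalgebra, which is canonically identified with the one-variable complex for the single coordinate $t_n$. As $|J|\ge 1$ here forces $0<j_n\le p^m$, Lemma~\ref{lem:r=2} gives $(h^2\circ d^1+d^0\circ h^1)(\omega)=\omega$, which is the asserted value (note $i+j_n>0$).

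It remains to treat $\hat J\ne 0$. Here $h^1(\omega)=0$ by the third line of Proposition~\ref{thm:homotopy}(i), so the claim reduces to $h^2(d^1\omega)=\omega$ if $i+j_n>0$ and $h^2(d^1\omega)=0$ if $i=j_n=0$. The sub-case $i=j_n=0$ is immediate: $d^1\omega=d^1\big((\dlog\hat t)^{\hat J}\big)$ is, by Proposition~\ref{thm:logPm}, a combination of sections $\delta(0;A+B,A+C)$ whose two multi-indices have vanishing $n$-th component, and $h^2$ annihilates every such section. In general I would expand $d^1\omega=d^0(\eta_n^{\{i\}})\,(\dlog t)^J+\eta_n^{\{i\}}\,d^1\big((\dlog t)^J\big)$ by the Leibniz rule, substitute the formulas of Section~\ref{ss:logjet}, and apply $h^2$ term by term. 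Because $h^2(\delta(K;K_1,K_2))$ vanishes unless the slot $K_s$, with $s=s(K_1,K_2)$, is a pure power $(K_s)_n\one_n$ (since $h^1(\delta(\,\cdot\,;L))=0$ unless $\hat L=0$), only a controlled set of terms survives, and in each of them the factor $(\dlog\hat t)^{\hat J}$ passes through $h^2$ untouched; the surviving sum is $(\dlog\hat t)^{\hat J}$ times an expression in $\eta_n,\dlog t_n$ alone. Collapsing the coefficients $\Gamma_{A,B,C}$ modulo $p$ by Lemmas~\ref{lem:0modp}, \ref{lem:1modp} and \ref{lem:b+p^mq}, exactly as in the proofs of Lemmas~\ref{lem:r=0} and \ref{lem:r=2}, this reduces to $\eta_n^{\{i\}}(\dlog t_n)^{j_n}$, whence $h^2(d^1\omega)=\omega$.

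The delicate point is the bookkeeping in this last step: one must keep track of which slot $h^2$ acts on after the Leibniz expansion, rewrite the non-basis sections of $\overline M^2$ whose $n$-th component equals $p^m$ by means of Lemma~\ref{lem:omega2}, and verify that the terms with $\Gamma_{A,B,C}\not\equiv 0$ modulo $p$ pair up so that everything beyond the single copy of $\omega$ cancels — in particular that no $\delta(\,\cdot\,;p^m\one_n,p^m\one_n)$-type section, which is zero by (\ref{eqn:p^mp^m}), is miscounted. Once the reduction to a computation in $\eta_n,\dlog t_n$ alone has been carried out, the remainder is a finite, if laborious, check of the same nature as Lemmas~\ref{lem:r=0} and \ref{lem:r=2}.
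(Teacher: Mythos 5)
Your case analysis agrees with the paper in the easy parts: the case $\hat{J}=0$ is exactly Lemma \ref{lem:r=2}, for $\hat{J}\neq 0$ one indeed has $h^1\big(\eta_n^{\{i\}}(\dlog t)^J\big)=0$, and the sub-case $i=j_n=0$ is immediate. The genuine gap is in the only hard case, $\hat{J}\neq 0$ with $i+j_n>0$, which you leave as a sketch, and the sketch as stated would not close. After the Leibniz expansion of $d^1\big(\eta_n^{\{i\}}(\dlog t)^J\big)$, the terms surviving $h^2$ are \emph{not} simply ``$(\dlog\hat{t})^{\hat{J}}$ times an expression in $\eta_n,\dlog t_n$ alone'' to be collapsed by coefficient arithmetic as in Lemmas \ref{lem:r=0} and \ref{lem:r=2}: the summand $\delta(i\one_n;\hat{J},j_n\one_n)$ has $s=2$, so by Proposition \ref{thm:homotopy}(ii) its image under $h^2$ is $-(\dlog\hat{t})^{\hat{J}}\otimes h^1\big(\delta_n(i;j_n)\big)$, where $h^1\big(\delta_n(i;j_n)\big)\in\sP_{X/S}^{(m)}$ acts through the \emph{right} module structure ${d_1^1}^*$, whereas all other surviving terms carry their coefficients by left multiplication. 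Converting that right action into left multiplication via the transit relation (\ref{eqn:trans}) produces an extra term $d^0\big(h^1(\delta_n(i;j_n))\big)(\dlog\hat{t})^{\hat{J}}$, and this is the whole point of the paper's proof: the surviving sum regroups as $\big((h^2\circ d^1)+(d^0\circ h^1)\big)\big(\delta_n(i;j_n)\big)\,(\dlog\hat{t})^{\hat{J}}$, after which Lemma \ref{lem:r=2} (used as a black box, not re-derived) gives $\delta_n(i;j_n)(\dlog\hat{t})^{\hat{J}}=\eta_n^{\{i\}}(\dlog t)^J$; the case $j_n=0$, $i>0$ is handled likewise via Lemma \ref{lem:r=0}.

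If you treat $(\dlog\hat{t})^{\hat{J}}$ as a commuting factor, you obtain only $(h^2\circ d^1)\big(\delta_n(i;j_n)\big)(\dlog\hat{t})^{\hat{J}}$, which is wrong whenever $p^m$ divides $i$, since then $h^1\big(\delta_n(i;j_n)\big)\neq 0$. Concretely, take $n=2$, $i=0$, $J=\one_1+\one_2$ (so $p^m\geq 2$; all relevant $\Gamma_{A,B,C}$ equal $1$): the terms of $h^2\big(d^1\big((\dlog t_1)(\dlog t_2)\big)\big)$ that survive are $-\eta_2(\dlog t_1)(\dlog t_2)$, $+(\dlog t_1)\otimes\eta_2$ and $-\eta_2(\dlog t_1)$. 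Reading the middle term as $\eta_2(\dlog t_1)$ gives the incorrect value $-\eta_2(\dlog t_1)(\dlog t_2)$, whereas (\ref{eqn:trans}) rewrites it as $\eta_2(\dlog t_1)+\eta_2(\dlog t_1)(\dlog t_2)+(\dlog t_1)(\dlog t_2)$ and yields the correct value $(\dlog t_1)(\dlog t_2)$. So the missing idea is precisely the use of (\ref{eqn:trans}) to generate the $d^0\circ h^1$ half of the one-variable homotopy identity; acknowledging that ``the bookkeeping is delicate'' does not supply it, and without it the asserted cancellation fails. (Your remarks about Lemma \ref{lem:omega2} and (\ref{eqn:p^mp^m}) are correct but peripheral to this point.)
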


\begin{proof}
If $\hat{J}=0$, then this is the same as Lemma \ref{lem:r=2}, thus we assume that $\hat{J}\neq 0$.
Then, since $h^1\big(\eta_n^{\{i\}}(\dlog t)^J\big)=0$, we calculate the section
\begin{equation}
    (h^2\circ d^1)\big(\eta_n^{\{i\}}(\dlog t)^J\big).
    \label{eqn:h2d1}
\end{equation}

First, we assume that $j_n\neq 0$. Then, the section (\ref{eqn:h2d1}) equals
\begin{eqnarray*}
&& -h^2\big(\delta(i\one_n; \hat{J}, j_n\one_n)\big)
- \sum_{\substack{a+b+c=j_n\\ c\neq j_n}}\Gamma_{a,b,c}h^2\left(\delta(i\one_n; (a+b)\one_n, \hat{J}+(a+c)\one_n)\right)\\
&+& \sum_{\substack{u+v+w=i\\ v\neq i}}\Gamma_{u,v,w}h^2\big(\delta( (u+v)\one_n; (u+w)\one_n, J)\big).
\end{eqnarray*}
This is equal to
\begin{eqnarray*}
	&& (\dlog\hat{t})^{\hat{J}}h^1\big(\delta_n(i;j_n)\big) 
- \sum_{\substack{a+b+c=j_n\\ b,c\neq j_n}} \Gamma_{a,b,c}h^2\big(\delta_n(i; a+b,a+c)\big)(\dlog\hat{t})^{\hat{J}}\\
&-& h^1\big(\delta_n(i;j_n)\big)(\dlog\hat{t})^{\hat{J}}
+ \sum_{\substack{u+v+w=i\\ v\neq i}}\Gamma_{u,v,w}h^2\big(\delta_n(u+v; u+w,j_n)\big)(\dlog\hat{t})^{\hat{J}}.
\end{eqnarray*}
Now, 
\[
(h^2\circ d^1)\left(\delta_n(i; j_n)\right) (\dlog\hat{t})^{\hat{J}}
\]
is equal to the sum of the second and the fourth term by a direct calculation, and
\[
(d^0\circ h^1)\left(\delta_n(i; j_n)\right) (\dlog\hat{t})^{\hat{J}}
\]
is equal to that of the first and the third term by (\ref{eqn:trans}).
Therefore, Lemma \ref{lem:r=2} shows that the sum equals
\[
    \delta_n(i;j_n)(\dlog\hat{t})^{\hat{J}}=\eta_n^{\{i\}}(\dlog t)^J.
\]

Next, let us assume that $j_n=0$. In this case, (\ref{eqn:h2d1}) equals
\begin{align*}
    & \sum_{\substack{a+b+c=i\\ b\neq i}}\Gamma_{a,b,c}h^2\left(\delta_n(a+b; a+c)\otimes (\dlog\hat{t})^{\hat{J}}\right)\\
    - & \sum_{\substack{A+B+C=J\\ B,C\neq J}}\Gamma_{A,B,C}h^2\big(\eta_n^{\{i\}}\delta(0; A+B, A+C)\big),
\end{align*}
and the second term is zero. Therefore this is equal to
\[
(h^1\circ d^0)\left(\eta_n^{\{i\}}\right)\otimes(\dlog{t})^J,
\]
which shows the proposition with the aid of Lemma \ref{lem:r=0}.
\end{proof}

Now, we are ready to prove Proposition \ref{thm:formalLSQ}.
In order to reduce the proof to Lemma \ref{lem:finalspecial}, we use the following lemma.

\begin{lemma}
\label{lem:induction}
Let $J, J_2,\dots, J_r \in\fI_n^{(m)}$, let $i\in\bN$, and assume that $0<j_n<p^m$.
Then, the following equation holds:
\[
h^{r+1}\left( d^1\big(\eta_n^{\{i\}}(\dlog t)^J\big)\otimes \Delta\right)
= (h^2\circ d^1)\big(\eta_n^{\{i\}}(\dlog t)^J \big)\otimes \Delta,
\]
where $\Delta:=\delta(0;J_2,\dots,J_r)$.
\end{lemma}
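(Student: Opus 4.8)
The plan is to expand the left-hand side with the Leibniz rule and to reduce the identity, monomial by monomial, to the case $r=2$, where it is essentially a tautology. Since the differentials of $L\dot\Omega_{X/S}^{(m),\bullet}$ satisfy the Leibniz rule, $d^1\big(\eta_n^{\{i\}}(\dlog t)^J\big)$ is an $\sO_X$-linear combination of $d^0(\eta_n^{\{i\}})\cdot(\dlog t)^J$ and $\eta_n^{\{i\}}\cdot d^1\big((\dlog t)^J\big)$, hence, by Proposition~\ref{thm:logPm} and the explicit formulas for $d^0$ and $d^1$ of Section~\ref{ss:logjet}, an $\sO_X$-linear combination of sections $\delta(P;K_1,K_2)$. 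The hypothesis $0<j_n<p^m$ is what makes the argument work: after discarding the vanishing coefficients by Lemma~\ref{lem:b+p^mq} and using $(\dlog t_n)^k=0$ for $k>p^m$ together with (\ref{eqn:p^mp^m}), every $\delta(P;K_1,K_2)$ that actually occurs carries its $t_n$-direction in the first one or two slots: one has $s(K_1,K_2)\in\{1,2\}$, and the $n$-component of the $s(K_1,K_2)$-th slot lies strictly between $0$ and $p^m$, the only exception being $s(K_1,K_2)=2$ with $n$-component $p^m$ (which can only arise from the $\eta_n^{\{i\}}$-factor). It therefore suffices to prove, for each such $\delta(P;K_1,K_2)$, the equality
\[
h^{r+1}\big(\delta(P;K_1,K_2)\otimes\delta(0;J_2,\dots,J_r)\big)=h^2\big(\delta(P;K_1,K_2)\big)\otimes\delta(0;J_2,\dots,J_r).
\]

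For this I would apply Corollary~\ref{cor:initialh} to the $(r+1)$-form $\delta(P;K_1,K_2,J_2,\dots,J_r)$ with $s':=\min\{s(K_1,K_2),2\}$, which by the previous paragraph satisfies $s'\le s(K_1,K_2,J_2,\dots,J_r)$. It gives
\[
h^{r+1}\big(\delta(P;K_1,K_2,J_2,\dots,J_r)\big)=(-1)^{s'-1}\delta(\hat P;K_1,\dots,K_{s'-1})\otimes h^{r-s'+2}\big(\delta(p_n\one_n;K_{s'},\dots,J_r)\big),
\]
and the same corollary applied to $\delta(P;K_1,K_2)$ gives the analogous formula with $h^{r-s'+2}$ replaced by $h^{3-s'}$. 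Since the $s'$-th slot has nonzero $n$-component, Proposition~\ref{thm:homotopy}(ii) (with $s=1$) then peels the remaining slots, in particular the passengers $J_2,\dots,J_r$, off to the right through the common map $h^1$; comparing the two outputs yields exactly $h^2(\delta(P;K_1,K_2))\otimes\delta(0;J_2,\dots,J_r)$, signs included. In the exceptional case where a slot $K_s$ carries the full exponent $p^m$, the section $\delta(P;K_1,K_2,J_2,\dots,J_r)$ is not of the basis form of Lemma~\ref{lem:generatorbarM}; there I would first apply Lemma~\ref{lem:omega2} to transport this $p^m$ rightward slot by slot — it is killed by (\ref{eqn:p^mp^m}) if it meets another $p^m$ in the $t_n$-direction, and otherwise it reaches the last slot or produces a slot with $n$-component in $(0,p^m)$ — and the identical transport on the $h^2$-side produces the same accumulated sign, so the computation reduces to the previous one.

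The crux is not conceptual but combinatorial: one must pin down exactly which monomials $\delta(P;K_1,K_2)$ survive in $d^1\big(\eta_n^{\{i\}}(\dlog t)^J\big)$ once all cancellations coming from Lemma~\ref{lem:b+p^mq}, the relations~(\ref{eqn:relation}), $(\dlog t_n)^k=0$ for $k>p^m$, and (\ref{eqn:p^mp^m}) are taken into account, check for each that $s(K_1,K_2)\le 2$ with the stated bound on the relevant $n$-component — this is precisely the point at which $0<j_n<p^m$ is used — and keep track of signs through Lemma~\ref{lem:omega2} in the $p^m$-case. Once this bookkeeping is carried out, the desired equality is immediate from Corollary~\ref{cor:initialh} and Proposition~\ref{thm:homotopy}(ii).
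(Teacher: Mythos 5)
Your proposal follows essentially the same route as the paper's proof: expand $d^1\big(\eta_n^{\{i\}}(\dlog t)^J\big)$ by the Leibniz rule, verify the identity monomial by monomial via the defining formula of the homotopy (Proposition \ref{thm:homotopy}(ii), Corollary \ref{cor:initialh}), and invoke Lemma \ref{lem:omega2} for the terms carrying the full exponent $p^m$ in the $t_n$-direction. One small correction to your bookkeeping: the exceptional monomials have that $p^m$ in the \emph{first} slot, namely the terms $\eta_n^{\{a+b\}}(\dlog t_n)^{a+c}\otimes(\dlog t)^J$ with $a+c=p^m$ coming from $d^0\big(\eta_n^{\{i\}}\big)$, not in the second slot as you state; this does not harm your argument, since the hypothesis of Lemma \ref{lem:omega2} is then vacuous and a single swap with the adjacent slot $J$ (whose $n$-component lies in $(0,p^m)$ by assumption) already reduces to the basis case, which is exactly the paper's parenthetical use of Lemma \ref{lem:omega2}.
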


\begin{proof}
We use the equation
\[
d^1\big(\eta_n^{\{i\}}(\dlog t)^J\big)=d^0\big(\eta_n^{\{i\}}\big)\otimes(\dlog t)^J+\eta_n^{\{i\}}d^1\left( (\dlog t)^J\right).
\]

First, we prove that
\begin{equation}
    \label{eqn:comp1}
    h^{r+1}\left(d^0\big(\eta_n^{\{i\}}\big)\otimes(\dlog t)^J\otimes\Delta\right)
    = h^2\left(d^0\big(\eta_n^{\{i\}}\big)\otimes(\dlog t)^J\right)\otimes\Delta.
\end{equation}
We know that
\[
d^0\left(\eta_n^{\{i\}}\right)\otimes(\dlog t)^J=\sum_{\substack{a+b+c=i\\ c\neq i}}\Gamma_{a,b,c}\eta_n^{\{a+b\}}(\dlog t_n)^{a+c}
\otimes (\dlog t)^J,
\]
and the similar equation as (\ref{eqn:comp1}) holds for each term
(if $a+c=p^m$, then we have to use Lemma \ref{lem:omega2}).
Therefore (\ref{eqn:comp1}) is true.

Next, we prove that
\begin{equation}
    \label{eqn:comp2}
    h^{r+1}\left(\eta_n^{\{i\}}d^1\left( (\dlog t)^J\right)\otimes\Delta\right)
    = h^2\left(\eta_n^{\{i\}}d^1\left((\dlog t)^J \right)\right)\otimes\Delta.
\end{equation}
Here, the left-hand side is equal to
\[
-h^{r+1}\left(\delta(i\one_n; \hat{J}, j_n\one_n)\otimes\Delta\right)
-\sum_{\substack{a+b+c=j_n\\ c\neq j_n}}h^{r+1}\left(\delta(i\one_n; (a+b)\one_n, \hat{J}+(a+c)\one_n)\otimes\Delta\right).
\]
Then similarly this shows the equation (\ref{eqn:comp2}).

The two equations (\ref{eqn:comp1}) and (\ref{eqn:comp2}) show the lemma.
\end{proof}

Let us finish the proof of Proposition \ref{thm:formalLSQ}.
Put $\Delta=\hat\delta(\hat{I}; J_1,\dots,J_{k-1}), H:=\delta(i_n\one_n; J)$ and $\Delta':=\delta(0; J_{k+1},\dots,J_r)$, where
$I\in\bN^n$, $J_1,\dots,J_{k-1}\in\fI_{n-1}^{(m)}$ and $J, J_{k+1},\dots,J_r\in\fI_n^{(m)}$.
Assume that $k=r$ or that $0<j_n<p^m$.

Then, such sections $\Delta\otimes H\otimes \Delta'$ generate $L\dot\Omega_{X/S}^r$ by Lemma \ref{lem:generatorbarM} and (\ref{eqn:trans}),
therefore let us compute the image of these through $h^{r+1}\circ d^r+d^{r-1}\circ h^r$.

Now, $(h^{r+1}\circ d^r)\left(\Delta\otimes H\otimes\Delta'\right)$ is equal to the section
\begin{align*}
    &h^{r+1}\big(d^{k-1}(\Delta)\otimes H\otimes\Delta'\big)
    + (-1)^{k-1}h^{r+1}\big(\Delta\otimes d^1(H)\otimes\Delta'\big)\\
    & \hspace{3pt} + (-1)^kh^{r+1}\big(\Delta\otimes H\otimes d^{r-k}(\Delta')\big)\\
    =& (-1)^k d^{k-1}(\Delta)\otimes h^1(H)\Delta' + \Delta\otimes h^{r-k+1}\left(d^1(H)\otimes \Delta'\right)
    - \Delta\otimes h^1(H) d^{r-k}(\Delta').
\end{align*}
By Lemma \ref{lem:induction}, the second term is equal to $\Delta\otimes (h^2\circ d^1)(H)\otimes \Delta'$.
In turn,
\[
(d^{r-1}\circ h^r)\left(\Delta\otimes H\otimes\Delta'\right)=(-1)^{k-1} d^{k-1}\left(\Delta\otimes h^1(H)\Delta'\right).
\]
Then, this is
\[
(-1)^{k-1}d^{k-1}(\Delta)\otimes h^1(H)\Delta'+\Delta\otimes (d^0\circ h^1)(H)\otimes\Delta'
+ \Delta\otimes h^1(H) d^{r-k}(\Delta').
\]
By adding two sections, we see that
\[
(h^{r+1}\circ d^r+d^{r-1}\circ h^r)\left(\Delta\otimes H\otimes \Delta'\right)=\Delta\otimes(h^2\circ d^1+d^0\circ h^1)(H)\otimes\Delta'.
\]
This equation, Lemma \ref{lem:finalspecial} and Lemma \ref{lem:induction} deduce the proposition.

\section{Application to Cohomology.}

Let $(S,\fa,\fb,\gamma)$ be a fine log.\ $m$-PD scheme and $X$ a fine log.\ scheme over $S$ such that
the underlying scheme $\underline{X}$ is flat over $\underline{S}$.

Theorem \ref{thm:trueformalLSQ} gives the log.\ exact Poincar\'e lemma.

\begin{theorem}
    Assume that $X$ is log.\ smooth over $S$ and that $p\sO_X=0$.
    Let $M$ be a flat $\sO_{X/S}^{(m)}$-module.
    Then, the complex $M\otimes \bar{L}^{(m)}(\dot\Omega_{X/S}^{(m),\bullet})$ is a resolution of $M$.
    \label{thm:cohomology}
\end{theorem}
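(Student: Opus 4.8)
The plan is to deduce this from the formal Poincar\'e lemma, Theorem \ref{thm:trueformalLSQ}, by the standard crystalline descent argument used in the classical case \cite[IV 3.1]{Berthelot:CCSCP}. Since $\dot\Omega_{X/S}^{(m),0}=\sO_X$, the first step is to record the canonical augmentation $\sO_{X/S}^{(m)}\to\bar{L}^{(m)}(\sO_X)=\bar{L}^{(m)}(\dot\Omega_{X/S}^{(m),0})$ attached to the description $L^{(m)}(\sF)_{(U,T,J,\delta)}={p_T}_*{p_X}^*(\sF)$ from the proof of Proposition \ref{thm:linearization} (pull-back along $p_T$); it is routine that its composite with $d^0$ vanishes, so that on tensoring with $M$ one gets a morphism of complexes $M[0]\to M\otimes\bar{L}^{(m)}(\dot\Omega_{X/S}^{(m),\bullet})$. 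What must be shown is that the augmented complex $0\to M\to M\otimes\bar{L}^{(m)}(\dot\Omega_{X/S}^{(m),0})\to M\otimes\bar{L}^{(m)}(\dot\Omega_{X/S}^{(m),1})\to\cdots$ of abelian sheaves on $\RCrism(X/S)$ is exact.

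Exactness may be tested on a generating family of the site, and --- just as in the reduction carried out in the proof of Proposition \ref{thm:linearization}, which follows \cite[IV 3.1.6]{Berthelot:CCSCP} --- I would reduce to checking it on those fundamental thickenings $(U,T,J,\delta)$ for which a retraction $g\colon T\to X$ over $S$, restricting to the structural morphism on $U$, exists; such retractions are available \'etale-locally because $X$ is log.\ smooth over $S$ and the $m$-PD ideal of $\underline{U}\hookrightarrow\underline{T}$ is locally nilpotent ($p$ being nilpotent). Fixing such a $T$: each $\bar{L}^{(m)}(\dot\Omega_{X/S}^{(m),r})$ is a restricted log.\ $m$-crystal by Proposition \ref{thm:linearization}(i), so the morphism $(U,T,J,\delta)\to(X,X,0)$ in $\RCrism(X/S)$ induced by $g$ gives an isomorphism $g^*\bigl(\bar{L}^{(m)}(\dot\Omega_{X/S}^{(m),r})_X\bigr)\xrightarrow{\sim}\bar{L}^{(m)}(\dot\Omega_{X/S}^{(m),r})_T$, whose source equals $g^*(L\dot\Omega_{X/S}^{(m),r})$ by Proposition \ref{thm:linearization}(i) together with the identity $L\dot\Omega_{X/S}^{(m),\bullet}=\P_{X/S}^{(m)}\otimes\dot\Omega_{X/S}^{(m),\bullet}$ of Definition \ref{def:logjet}. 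These isomorphisms are compatible with the differentials and with the augmentation by functoriality, so on $T$ the augmented complex is identified with
\[
M_T\otimes_{g^{-1}\sO_X}g^{-1}\bigl(0\to\sO_X\to L\dot\Omega_{X/S}^{(m),0}\to L\dot\Omega_{X/S}^{(m),1}\to\cdots\bigr).
\]

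To conclude I would invoke Theorem \ref{thm:trueformalLSQ} (the complex inside $g^{-1}$ is exact), the exactness of $g^{-1}$, and the flatness of $M_T$ over $g^{-1}\sO_X$: the latter holds because $M_T$ is flat over $\sO_T$ ($M$ being a flat $\sO_{X/S}^{(m)}$-module) and $g$ is flat --- by Lemma \ref{lem:rest-D} the sheaf $\sO_T$ is, \'etale-locally, an $m$-PD polynomial algebra over $\sO_U$, hence flat over $\sO_U$, while $\sO_U$ is flat over $\sO_X$ as $U\to X$ is \'etale. Therefore the displayed complex is exact, which gives the theorem.

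The hard part is precisely the reduction to thickenings admitting a retraction: one must be sure that such thickenings suffice to detect the exactness of a complex assembled from restricted $m$-crystals, and this is where the classical crystalline input is needed --- the infinitesimal lifting property of the log.\ smooth scheme $X$ together with local nilpotence of the $m$-PD ideals --- exactly as in Berthelot's proof of the classical crystalline Poincar\'e lemma. Once this reduction is secured, the remainder is just the flat base change above combined with Theorem \ref{thm:trueformalLSQ}, and should be short.
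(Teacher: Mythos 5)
Your proposal is correct and follows essentially the same route as the paper: reduce, étale-locally on a fundamental thickening $(U,T,J,\delta)$, to the case where a flat morphism $T\to X$ compatible with $U\hookrightarrow T$ exists (supplied by Lemma \ref{lem:rest-D}), identify the evaluated complex with the pull-back of $\sO_X\to L\dot\Omega_{X/S}^{(m),\bullet}$ tensored with $M_T$, and conclude by Theorem \ref{thm:trueformalLSQ} together with the flatness of that morphism and of $M_T$. The reduction you flag as the ``hard part'' is exactly the step the paper handles via Lemma \ref{lem:rest-D} (the $m$-PD polynomial algebra description gives the retraction and its flatness directly), so no additional idea is needed.
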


\begin{proof}
    Because of Proposition \ref{thm:linearization} (i) and Lemma \ref{lem:inducingstrat},
    the complex $\bar{L}^{(m)}(\dot\Omega_{X/S}^{(m),\bullet})$ of $\sO_{X/S}^{(m)}$-modules
    gives $\bar{L}^{(m)}\dot\Omega_{X/S}^{(m),\bullet}$.
    Combining with the natural $\sO_{X/S}^{(m)}$-linear map $\sO_{X/S}^{(m)}\to \bar{L}^{(m)}(\dot\Omega_{X/S}^{(m),0})$,
    we get a morphism $M\to M\otimes\bar{L}^{(m)}(\dot\Omega_{X/S}^{(m),\bullet})$.

    In order to prove that this is an isomorphism, it suffices to argue on each fundamental
    log.\ $m$-PD thickening $(U,T,J,\delta)$, and the assertion is local on $T_{\et}$.
    We may therefore, by Lemma \ref{lem:rest-D}, assume that there exists an $S$-morphism $h\colon T\to X$ compatible with $i\colon U\hookrightarrow T$ and that the underlying morphism of $h$ is flat.
    On such a $T$, this morphism is of the form
    $M_T\to M_T\otimes_{\sO_T}h^{\ast}\big(L\dot\Omega_{X/S}^{(m),\bullet}\big)$,
    and it is obtained from the natural morphism $\sO_X\to L\dot\Omega_{X/S}^{(m),\bullet}$
    by pull-back along $h$ and by taking tensor with $M_T$.
    The morphism $\sO_X\to L\dot\Omega_{X/S}^{(m),\bullet}$ is an quasi-isomorphism by Theorem \ref{thm:trueformalLSQ}.
    The flatness of the underlying morphism of $h$ and that of $M_T$ shows the assertion.
\end{proof}

\begin{corollary}
\label{thm:exactpoincare}
    Assume that $X$ is log.\ smooth over $S$ and that $p\sO_X=0$.
    Let $E$ be a flat log.\ $m$-crystal in $\sO_{X/S}^{(m)}$-modules.
    Then, there exists an isomorphism in the derived category
    \[
    \bR{u_{X/S}^{(m)}}_*(E)\to E_X\otimes\dot\Omega_{X/S}^{(m),\bullet}.
    \]
\end{corollary}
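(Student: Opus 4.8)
The plan is to reduce the statement to Theorem~\ref{thm:cohomology} by passing to the restricted log.\ $m$-crystalline topos, where the resolution furnished by that theorem can be pushed forward one degree at a time. First I would regard $E$ as a complex concentrated in degree $0$ and invoke Proposition~\ref{thm:restricted}, which gives a canonical isomorphism $\bR{u_{X/S}^{(m)}}_\ast(E)\cong\bR{\bar{u}_{X/S}^{(m)}}_\ast\big({Q_{X/S}^{(m)}}^\ast(E)\big)$ in $D^+(X_{\et},\sO_X)$. The sheaf ${Q_{X/S}^{(m)}}^\ast(E)$ is again a restricted log.\ $m$-crystal, since the crystal condition for morphisms of $\RCrism(X/S)$ is a special case of that for $\Crism(X/S)$; and it remains flat because $Q_{X/S}^{(m)}$ is a morphism of topoi whose inverse image is exact and carries $\sO_{X/S}^{(m)}$ to $\sO_{X/S}^{(m)}$. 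Hence Theorem~\ref{thm:cohomology} applies with $M={Q_{X/S}^{(m)}}^\ast(E)$: the augmentation ${Q_{X/S}^{(m)}}^\ast(E)\to{Q_{X/S}^{(m)}}^\ast(E)\otimes\bar{L}^{(m)}(\dot\Omega_{X/S}^{(m),\bullet})$ is a quasi-isomorphism, so $\bR{\bar{u}_{X/S}^{(m)}}_\ast$ of the two sides agree.

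Next I would rewrite the linearized complex. Proposition~\ref{thm:linearization}~(iii), applied in each degree and then read off as an isomorphism of complexes once one checks compatibility with the face maps, yields ${Q_{X/S}^{(m)}}^\ast(E)\otimes\bar{L}^{(m)}(\dot\Omega_{X/S}^{(m),\bullet})\cong\bar{L}^{(m)}\big(E_X\otimes\dot\Omega_{X/S}^{(m),\bullet}\big)$, where $E_X$ is the value of the crystal at $(X,X,0)$ (unaffected by ${Q_{X/S}^{(m)}}^\ast$). Finally, Proposition~\ref{thm:linearization}~(ii) says that for every $\sO_X$-module $\sF$ the object $\bar{L}^{(m)}(\sF)$ is ${\bar{u}_{X/S}^{(m)}}_\ast$-acyclic with ${\bar{u}_{X/S}^{(m)}}_\ast\bar{L}^{(m)}(\sF)=\sF$; since the log.\ jet complex is bounded below, $\bR{\bar{u}_{X/S}^{(m)}}_\ast$ of $\bar{L}^{(m)}\big(E_X\otimes\dot\Omega_{X/S}^{(m),\bullet}\big)$ may be computed degreewise and equals the complex $E_X\otimes\dot\Omega_{X/S}^{(m),\bullet}$. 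Composing these four isomorphisms produces the asserted isomorphism in the derived category.

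All the genuine difficulty has already been absorbed into Theorem~\ref{thm:trueformalLSQ}; here only two bookkeeping points need care. The first, and the one I expect to be the main (though entirely formal) subtlety, is that the log.\ jet complex is unbounded---merely bounded below---so one must justify that $\bR{\bar{u}_{X/S}^{(m)}}_\ast$ commutes with the formation of the complex $\bar{L}^{(m)}\big(E_X\otimes\dot\Omega_{X/S}^{(m),\bullet}\big)$; this is the standard fact that a bounded-below complex of $F$-acyclic objects computes $\bR F$ by termwise application of $F$, or equivalently that the first-quadrant spectral sequence with $E_1$-term $R^q{\bar{u}_{X/S}^{(m)}}_\ast\big(\bar{L}^{(m)}(E_X\otimes\dot\Omega_{X/S}^{(m),p})\big)$ degenerates at $E_1$. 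The second is the verification that the degreewise isomorphisms of Proposition~\ref{thm:linearization}~(iii) intertwine the differential of $\bar{L}^{(m)}(\dot\Omega_{X/S}^{(m),\bullet})$ with that of $\bar{L}^{(m)}\big(E_X\otimes\dot\Omega_{X/S}^{(m),\bullet}\big)$, which follows from the naturality of that isomorphism in $\sF$ applied to each face operator ${d_r^i}^\ast$.
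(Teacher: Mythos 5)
Your argument is correct and is precisely the ``standard argument'' the paper invokes: the paper's proof of this corollary simply cites Theorem \ref{thm:cohomology}, and your chain (Proposition \ref{thm:restricted}, Theorem \ref{thm:cohomology} applied to ${Q_{X/S}^{(m)}}^\ast(E)$, Proposition \ref{thm:linearization} (iii) degreewise, then (ii) with the bounded-below acyclicity argument) is the expected expansion of it. No discrepancy with the paper's approach.
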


\begin{proof}
    This follows from Theorem \ref{thm:cohomology} by a standard argument.
\end{proof}

\begin{theorem}
    Let $(\fa_0,\fb_0,\gamma_0)$ be a quasi-coherent $m$-PD sub-ideal of $\fa$, and $S_0$ the exact closed subscheme of $S$
    defined by $\fa_0$. We assume that there exists a fine log.\ smooth scheme $Y$ over $S$ such that
    the underlying scheme $\underline{Y}$ is proper and flat over $\underline{S}$ and that $X=Y\times_SS_0$.
    Moreover, we assume that $p\sO_Y=0$ and that $\underline{S}$ is noetherian.
    Let $f\colon X\to S$ denote the structure morphism. Then, for each natural number $i$ and each flat $\sO_{X/S}^{(m)}$-module $E$ of finite presentation,
    the $\sO_S$-module $R^i{f_{X/S}^{(m)}}_*E$ is finitely generated.
    \label{thm:finitude}
\end{theorem}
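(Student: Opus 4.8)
The plan is to transport the statement to the log.\ smooth $S$-scheme $Y$, apply the exact Poincar\'e lemma (Corollary \ref{thm:exactpoincare}) there, and then remove the unboundedness of the log.\ jet complex by a naive truncation. First I would introduce the closed immersion $\iota\colon X\hookrightarrow Y$ obtained from $S_0\hookrightarrow S$ by base change along $Y\to S$; since $Y$ is log.\ smooth over $S$ and $\underline Y$ is flat over $\underline S$, Proposition \ref{thm:lift} and Corollary \ref{cor:cryst_equiv} apply with the pair ``$X_0\hookrightarrow X$'' of those statements taken to be $\iota$. Hence $E':={\iota\crism}_\ast(E)$ is a log.\ $m$-crystal in $\sO_{Y/S}^{(m)}$-modules, and it is again flat and of finite presentation by Proposition \ref{thm:lift} together with the routine fact that ${\iota\crism}_\ast$, being an exact monoidal equivalence carrying $\sO_{X/S}^{(m)}$ to $\sO_{Y/S}^{(m)}$, preserves these properties. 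Moreover $f_{X/S}^{(m)}=f_{Y/S}^{(m)}\circ\iota\crism$ and ${\iota\crism}_\ast$ is exact, so $\bR{f_{X/S}^{(m)}}_\ast(E)\cong\bR{f_{Y/S}^{(m)}}_\ast(E')$, and in particular $R^i{f_{X/S}^{(m)}}_\ast(E)\cong R^i{f_{Y/S}^{(m)}}_\ast(E')$. So it is enough to prove finiteness over $Y/S$.

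Next, since $Y$ is log.\ smooth over $S$ and $p\sO_Y=0$, Corollary \ref{thm:exactpoincare} gives an isomorphism $\bR{u_{Y/S}^{(m)}}_\ast(E')\xrightarrow{\sim}E'_Y\otimes\dot\Omega_{Y/S}^{(m),\bullet}$, so that, writing $f_Y\colon Y\to S$ for the structure morphism,
\[
\bR{f_{Y/S}^{(m)}}_\ast(E')\;\cong\;\bR f_{Y,\et,\ast}(K^{\bullet}),\qquad K^{\bullet}:=E'_Y\otimes\dot\Omega_{Y/S}^{(m),\bullet}.
\]
The elementary point to record here is that each term $\dot\Omega_{Y/S}^{(m),r}$ is a coherent $\sO_Y$-module: by Definition \ref{def:logjet} and the local description of the jet complex it is, \'etale locally on $Y$, generated over $\sO_Y$ by the finitely many sections $\delta(I;J_1,\dots,J_r)$ with $|I|\le p^m$ and $J_1,\dots,J_r\in\fI_n^{(m)}$ (recall $\eta^{\{I\}}=0$ in the jet complex once $|I|>p^m$), and $\underline Y$ is noetherian, being proper over the noetherian scheme $\underline S$. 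As $E'_Y$ is coherent (indeed flat of finite presentation, hence finite locally free), $K^{\bullet}$ is a complex of coherent $\sO_Y$-modules; it is bounded below but \emph{not} bounded above.

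The remaining step deals with this unboundedness. Fix the degree $i$ and pick an integer $N\ge i+1$. The stupid truncations give a short exact sequence of complexes
\[
0\longrightarrow\sigma^{\ge N+1}K^{\bullet}\longrightarrow K^{\bullet}\longrightarrow\sigma^{\le N}K^{\bullet}\longrightarrow 0.
\]
Since $\underline Y\to\underline S$ is proper and $\underline S$ is noetherian, $\bR f_{Y,\et,\ast}$ has finite cohomological dimension on quasi-coherent sheaves; as $\sigma^{\ge N+1}K^{\bullet}$ is concentrated in degrees $\ge N+1$, the hypercohomology spectral sequence gives $\bR^j f_{Y,\et,\ast}(\sigma^{\ge N+1}K^{\bullet})=0$ for every $j\le N$, in particular for $j=i$ and $j=i+1$. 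The long exact sequence attached to the short exact sequence above then identifies $R^i{f_{Y/S}^{(m)}}_\ast(E')$ with $\bR^i f_{Y,\et,\ast}(\sigma^{\le N}K^{\bullet})$, the $i$-th hyperdirect image, under the proper morphism $\underline Y\to\underline S$ over the noetherian base $\underline S$, of a \emph{bounded} complex of coherent $\sO_Y$-modules. By Grothendieck's finiteness theorem for proper morphisms this is a coherent $\sO_S$-module, and combining with the first step $R^i{f_{X/S}^{(m)}}_\ast(E)$ is coherent, in particular finitely generated.

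I expect the only real obstacle to be precisely the unboundedness of the log.\ jet complex, which forbids a direct appeal to the finiteness theorem for proper morphisms; everything turns on the observation that, thanks to the truncation of the divided-power part built into Definition \ref{def:logjet}, each term of $\dot\Omega_{Y/S}^{(m),\bullet}$ is coherent, so that the high-degree part contributes nothing to a fixed $R^i$ once the proper pushforward is known to have finite cohomological dimension. The reduction to the smooth lift $Y$ is purely formal---a consequence of Proposition \ref{thm:lift} and Corollary \ref{cor:cryst_equiv}---but unavoidable, since $X$ itself need not be log.\ smooth over $S$; the routine verifications that ${\iota\crism}_\ast$ preserves flatness and finite presentation and is compatible with the projections should be disposed of along the way.
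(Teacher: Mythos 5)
Your overall route is the paper's own: transfer the problem to the log.\ smooth lift $Y$ via Proposition \ref{thm:lift}, apply Corollary \ref{thm:exactpoincare} there, and conclude by properness of $\underline{Y}\to\underline{S}$; your truncation argument is a correct expansion of the paper's terse ``as in the classical case''. The input it needs is only that each term $\dot\Omega^{(m),r}_{Y/S}$ is a finitely generated, hence coherent, $\sO_Y$-module, which is true: \'etale locally it is generated by the finitely many sections $(\dlog t)^{J_1}\otimes\dots\otimes(\dlog t)^{J_r}$ with $J_k\in\fI_n^{(m)}$. (The sections $\delta(I;J_1,\dots,J_r)$ you quote, with the extra $\eta^{\{I\}}$-slot, are generators of the \emph{linearized} complex, where $I$ is unbounded; and the property left open in the paper is local \emph{freeness}, not finite generation, so coherence is safe.) Note also that the finite cohomological dimension of $f_{Y,\et,\ast}$ is not needed: $\sigma^{\geq N+1}K^{\bullet}$ is a bounded-below complex concentrated in degrees $\geq N+1$, so its hyperdirect images vanish in degrees $\leq N$ for trivial degree reasons.

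There is, however, a genuine gap at the first step. The theorem assumes only that $E$ is a flat $\sO_{X/S}^{(m)}$-module of finite presentation; it is \emph{not} assumed to be a log.\ $m$-crystal. Proposition \ref{thm:lift} sends crystals to crystals, so your assertion that $E'={\iota\crism}_\ast(E)$ is a flat log.\ $m$-crystal --- which is precisely the hypothesis of Corollary \ref{thm:exactpoincare} --- is unjustified, and false in general for a mere flat finitely presented module; likewise, Corollary \ref{cor:cryst_equiv} is an equivalence on crystals only, so the ``exact monoidal equivalence'' argument for preservation of flatness and finite presentation is not available at this stage. The paper closes exactly this hole by a preliminary d\'evissage: the question being local, quasi-coherence of $E$ and the long exact sequence of cohomology reduce one to the case where $E$ is free, and a free module, being a direct sum of copies of $\sO_{X/S}^{(m)}$, \emph{is} a flat crystal (and its realizations are free, which also repairs your parenthetical claim that $E'_Y$ is finite locally free). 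With that reduction inserted before (or after) the transfer to $Y$ --- the transfer itself only uses exactness of ${\iota\crism}_\ast$ and compatibility of the projections, so it is valid for arbitrary modules --- your argument goes through and coincides with the paper's.
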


\begin{proof}
    Since the question is local, the quasi-coherence of $E$ and the cohomology 
    long exact sequence allows us to assume that $E$ is free.
    Moreover, by using Proposition \ref{thm:lift}, we may assume that $S=S_0$.
    then, the assertion follows from Corollary \ref{thm:exactpoincare} as in the classical case.
\end{proof}

\section*{Acknowledgements}

This article is based on the latter half of the master thesis of the author.
The author would like to express his greatest gratitude to his advisor Atsushi Shiho for
sincere guidance, introducing him to the field of crystalline cohomology
and giving him a lot of comments on the master thesis.

The author is also grateful to Bernard Le Stum for the conversation during his stay in Japan, and to Pierre Berthelot for his sincere and detailed answer to my questions on the crystalline site of higher level.


\end{document}